\theoremstyle{plain}
\newtheorem*{thm*}{Theorem}
\newtheorem{thm}{Theorem}[section]
\newtheorem{prop}[thm]{Proposition}
\newtheorem{corr}[thm]{Corollary}
\theoremstyle{definition}
\newtheorem*{defn*}{Definition}
\newtheorem{defn}[thm]{Definition} 
\newtheorem{remark}[thm]{Remark}
\title{Convergence of Likelihood Ratios and Estimators for Selection in non-neutral Wright-Fisher Diffusions}
\date{\today}
\author{Jaromir Sant,$^{1}$, Paul A.~Jenkins,$^{2,3,4}$, Jere Koskela,$^{2}$, Dario Span{\`o},$^{2}$ \\ \\ \normalsize{MASDOC\hspace*{0.5mm}$^{1}$, Department of Statistics$^{2}$ \& Department of Computer Science$^{3}$} \\ \normalsize{University of Warwick, Coventry CV4 7AL, United Kingdom} \\ \normalsize{The Alan Turing Institute$^{4}$, British Library, London NW1 2DB, United Kingdom}}
\begin{document}

\maketitle
\begin{abstract}
	A number of discrete time, finite population size models in genetics describing the dynamics of allele frequencies are known to converge (subject to suitable scaling) to a diffusion process in the infinite population limit, termed the Wright--Fisher diffusion. This diffusion evolves on a bounded interval, so that many standard results in diffusion theory, which assume evolution on the real line, no longer apply. In this article we derive conditions to establish $\boldsymbol{\vartheta}$-uniform ergodicity for a general diffusion taking values in a bounded interval. Using these conditions, we show that the Wright--Fisher diffusion is uniformly in the selection and mutation parameters ergodic, and that the measures induced by the solution to the stochastic differential equation are uniformly locally asymptotically normal. Subsequently these two results are used to analyse the statistical properties of the Maximum Likelihood and Bayesian estimators for the selection parameter, when both selection and mutation are acting on the population. In particular, it is shown that these estimators are uniformly over compact sets consistent, display uniform in the selection parameter asymptotic normality and convergence of moments over compact sets, and are asymptotically efficient for a suitable class of loss functions.
\end{abstract}

\section{Introduction}
\noindent Mathematical population genetics is concerned with the study of how populations evolve over time, offering viable models to study how various biological phenomena such as selection and mutation affect the genetic profile of the population they act upon. Many models have been proposed over the years, but perhaps the most popular is the Wright--Fisher model (see for instance \cite[Chapter 15, Section 2]{KarlinTaylor}). \\ 
\newline
\noindent Under a suitable scaling of both space and time, a diffusion limit exists for the Wright--Fisher model, which is referred to as the Wright--Fisher diffusion, (\ref{WFDiff}), and is the main focus of this article. The Wright--Fisher diffusion is robust in the sense that a broad class of Cannings \cite{Cannings} models converge to it when suitably scaled. Furthermore, it has the neat property that the only contribution to the diffusion coefficient comes from random mating whilst other features such as selection and mutation appear solely in the drift coefficient. This facilitates inference as one can concentrate on estimating the drift, treating the diffusion coefficient as a known expression. \\
\newline
\noindent In this article we focus on a continuously observed Wright--Fisher diffusion describing the allele frequency dynamics in a two-allele, haploid population undergoing both selection and mutation. In Section 2 we start by considering a general scalar diffusion taking values in an arbitrary bounded interval $[l,r]$ (with $-\infty < l < r < \infty$) whose boundary points are either regular or entrance. In this setting, we derive verifiable criteria to establish $\boldsymbol{\vartheta}$-uniform ergodicity (see Definition \ref{UnifInVarthetaErg}), and further extend these conditions to allow for $\boldsymbol{\vartheta}$-uniform ergodicity with respect to a specific class of unbounded functions. Subsequently we introduce the Wright--Fisher diffusion, and by making use of the previously derived criteria, we show that the diffusion is ergodic uniformly in both the selection and mutation parameters, and that the associated family of measures induced by the solution to the stochastic differential equation (SDE) is uniformly locally asymptotically normal (provided the mutation parameters are greater than 1). In Section 3 we then shift our focus onto the properties of the maximum likelihood (ML) and Bayesian estimators for the selection parameter $s\in\mathcal{S}\subset\mathbb{R}$ (which measures how much more favourable one allele is over the other), under the assumption that the mutation parameters are a priori known. We briefly discuss some technical issues associated with conducting joint inference for the selection and mutation parameters in Remark \ref{InferringMutation}. \\ \newline
\noindent We point out here that by observing the path continuously through time without error, one can establish and analyse explicitly the statistical error produced by an estimator based on the whole sample path, which then sets a benchmark for the statistical performance of alternative estimators based on less informative (e.g.\ discrete) observations. In a discrete observation setting, in addition to the above mentioned statistical error, one also has to deal with observational error. One certainly cannot hope for an estimator that performs better in a discrete setting than in a continuous one, so our analysis may be viewed as the `best possible' performance for inference from a discretely observed model.\\
\newline
\noindent Inference for scalar diffusions, particularly proving consistency of estimators under specific observational schemes, has generated considerable interest over the past few years \cite{Gugushvili,Kuto,Nickl2,Nickl1,Panzar,vanderMeulen1,vanZanten1,Watterson}. However, most of the work so far has considered classes of diffusions which directly preclude the Wright--Fisher diffusion, for instance by imposing periodic boundary conditions on the drift coefficients or by requiring the diffusion coefficient to be strictly positive everywhere. The asymptotic study of a variety of estimators for continuously observed ergodic scalar diffusions has been entertained in great depth in \cite{Kuto}; see in particular Theorems 2.8 and 2.13 in \cite{Kuto}, which are respectively adaptations of Theorems I.5.1, I.10.1 and I.5.2, I.10.2 in \cite{IbraKhas}. However Theorems 2.8 and 2.13 in \cite{Kuto} cannot be applied directly to the Wright--Fisher diffusion as certain conditions do not hold, namely the reciprocal of the diffusion coefficient does not have a polynomial majorant. This discrepancy makes replicating the results for the Wright--Fisher diffusion with selection and mutation highly non-trivial. Instead we exploit the explicit nature of (\ref{WFDiff}), below, to prove, in our main result Theorem \ref{WFEstimatorResult}, uniform in the selection parameter over compact sets consistency, asymptotic normality and convergence of moments, as well as asymptotic efficiency for both the ML and Bayesian estimators. We achieve this by showing that the conditions of Theorems I.5.1, I.10.1 and I.5.2, I.10.2 in \cite{IbraKhas} still hold for the Wright--Fisher diffusion and that this diffusion is ergodic uniformly in the selection and mutation parameters (a term we define in Section \ref{WFDiffSection}). Note that the uniformity in our results for the ML and Bayesian estimators is particularly useful as it controls the lowest rate (over the true parameters) at which the parameters of interest are being learned by the inferential scheme. We further highlight that the conditions derived in Theorem \ref{UnifErgThm} provide a straightforward way to verify whether a positively recurrent diffusion on an arbitrary interval with regular or entrance boundaries is uniformly in the parameter ergodic, whilst Theorem \ref{UnifErgForUnbounded} illustrates additional conditions necessary to extend this notion for a specific class of unbounded functions when the diffusion has solely entrance boundaries.  \\
\newline
\noindent The Wright--Fisher diffusion with selection but without mutation was tackled specifically by Watterson in \cite{Watterson}, where the author makes use of a frequentist framework. Having no mutation ensures that the diffusion is absorbed at either boundary point 0 or 1 in finite time almost surely, and by conditioning on absorption Watterson computes the moment generating function, proves asymptotic normality, and derives hypothesis tests based on the Maximum Likelihood Estimator (MLE). Watterson's work however does not address the Bayesian estimator, nor does it readily extend to the case when mutation is present because the diffusion is no longer absorbed at the boundaries. In this sense the results obtained in Theorem \ref{WFEstimatorResult} are complementary to those obtained by Watterson under the assumption that the mutation parameters are known. Although this is a restriction, we are observing the path continuously over the interval $[0,T]$ and subsequently sending $T\rightarrow\infty$, so these parameters could be inferred by considering the boundary behaviour of the diffusion. More precise details about this can be found in Remark \ref{InferringMutation} in Section \ref{ResultsSection}, whilst a related argument applied to the squared Bessel process can be found in \cite[Remark 2.2]{pit:yor:1981}. \\
\newline
\noindent The rest of this article is organised as follows: in Section \ref{WFDiffSection} we start by considering a general diffusion on a bounded interval with entrance or regular boundaries, and proceed to provide verifiable criteria which ensure that the diffusion is $\boldsymbol{\vartheta}$-uniformly ergodic in Theorem \ref{UnifErgThm}. We then proceed to extend this notion to a specific class of unbounded functions for diffusions with entrance boundaries in Theorem \ref{UnifErgForUnbounded}, before moving on to introduce the Wright--Fisher diffusion and proving that this diffusion is ergodic uniformly in both the selection and mutation parameters, as well as uniformly locally asymptotically normal by making use of the general results we derive. Section \ref{ResultsSection} then focuses on the ML and Bayesian estimators for the selection parameter, proving that these estimators have a set of desirable properties in Theorem \ref{WFEstimatorResult}. The results are then supported via numerical simulations in Section \ref{Simulations}, whilst Section \ref{discussion} concludes with a discussion. The proofs of Theorems \ref{UnifErgThm} and \ref{UnifErgForUnbounded} can be found in Appendices \ref{appendix1} and \ref{appendix2} respectively.

\section{$\boldsymbol{\vartheta}$-uniform ergodicity for scalar diffusions}\label{WFDiffSection}
\noindent We start by considering an arbitrary fixed interval $[l,r]$ on which we define the SDE 
\begin{align}\label{GeneralDiffusion}
	dY_t = \mu(\boldsymbol{\vartheta},Y_t)dt + \sigma(Y_t) dW_t, & & Y_0 \sim \nu, & & \boldsymbol{\vartheta}\in\boldsymbol{\Theta}\subseteq\mathbb{R}^{d}, d \geq 1
\end{align}
where $\nu$ is an arbitrary initial distribution on $[l,r]$, $(W_{t})_{t\geq0}$ a standard Wiener process defined on a given filtered probability space, $\mu$ and $\sigma$ are such that the SDE admits a unique strong solution which we denote by $Y:= (Y_t)_{t\geq 0}$, $-\infty < l < r < \infty$ are both either entrance or regular boundaries for $Y$, and the observation interval is fixed to $[0,T]$. We denote by $\mathbb{P}_{\nu}^{(\boldsymbol{\vartheta})}$ the law induced on the space of continuous functions mapping $[0,T]$ into $[l,r]$ (henceforth denoted $C_{T}([l,r])$) by the solution to (\ref{GeneralDiffusion}) when the true diffusion parameter is set to $\boldsymbol{\vartheta}$, and $Y_0 \sim\nu$ (with dependence on $T$ being implicit). Furthermore we denote taking expectation with respect to $\mathbb{P}^{(\boldsymbol{\vartheta})}_{\nu}$ by $\mathbb{E}^{(\boldsymbol{\vartheta})}_{\nu}$. \\ \newline
\noindent Assume further that $Y$ is positive recurrent, then using standard one-dimensional diffusion theory (see Theorem 1.16 in \cite{Kuto}), we get that the unique invariant density is given by
\begin{align}\label{GeneralInvariantDensity}
	f^{Y}_{\boldsymbol{\vartheta}}(x) = \frac{1}{G^{Y}_{\boldsymbol{\vartheta}}}\frac{2}{\sigma^{2}(x)}e^{2\int^{x}\frac{\mu(\boldsymbol{\vartheta},z)}{\sigma^{2}(z)}dz}, & & x\in[l,r], & & G^{Y}_{\boldsymbol{\vartheta}} := \int_{l}^{r}\frac{2}{\sigma^{2}(x)}e^{2\int^{x}\frac{\mu(\boldsymbol{\vartheta},z)}{\sigma^{2}(z)}dz}dx.
\end{align}
\noindent In what follows, we denote taking expectation with respect to $f^{Y}_{\boldsymbol{\vartheta}}$ by $\mathbb{E}^{(\boldsymbol{\vartheta})}$, where the omission of the subscript will indicate that we start from stationarity, and henceforth always assume that $\xi\sim f^{Y}_{\boldsymbol{\vartheta}}$. \\ \newline 
\noindent In order to derive the results in Section \ref{ResultsSection}, we will need a slightly stronger notion of ergodicity which we now define. The idea here is that we can extend pointwise ergodicity in the parameter $\boldsymbol{\vartheta}$ to any compact set $\mathcal{K}\subset\Theta$ by finding the slowest rate of convergence which works within that compact set. More rigorously, we introduce the following definition. 
\begin{defn}\label{UnifInVarthetaErg}
	A process $Y$ is said to be \emph{ergodic uniformly in the parameter} $\boldsymbol{\vartheta}$ (or \emph{$\boldsymbol{\vartheta}$-uniformly ergodic}), if $\forall \varepsilon >0$ we have that
	\begin{align}\label{UnifErgDef}
		\lim\limits_{T\rightarrow \infty}\sup\limits_{\boldsymbol{\vartheta} \in\mathcal{K}}\mathbb{P}^{(\boldsymbol{\vartheta})}_{\nu}\Bigg [\left|\frac{1}{T}\int_{0}^{T}h(Y_{t})dt - \mathbb{E}^{(\boldsymbol{\vartheta})}\big [h\left(\xi\right)\big ]\right| > \varepsilon\Bigg ] = 0
	\end{align}
	holds for any $\mathcal{K}$ compact subset of the parameter space, and for any function $h:[l,r]\rightarrow\mathbb{R}$ bounded and measurable, where $\xi\sim f_{\boldsymbol{\vartheta}}^{Y}$.
\end{defn}
\noindent In the context of scalar diffusions defined on a bounded interval $[l,r]$ with $-\infty < l < r < \infty$, where both boundaries are either regular or entrance we have the following theorem:
\begin{thm}\label{UnifErgThm}
	Let $Y$ be defined as above as the solution to \eqref{GeneralDiffusion}, with boundary points $l$ and $r$ either entrance or regular, and that the expressions
	\begin{align}\label{ellConditions}
		\kappa^{l}_{\boldsymbol{\vartheta}}(a,b) &:= \int_{a}^{b}e^{-\int^{\xi}\frac{2\mu(\boldsymbol{\vartheta},y)}{\sigma^{2}(y)}dy}\int_{l}^{\xi}\frac{2}{\sigma^{2}(\eta)}e^{\int^{\eta}\frac{2\mu(\boldsymbol{\vartheta},y)}{\sigma^{2}(y)}dy}d\eta d\xi, \nonumber \\
		\kappa^{r}_{\boldsymbol{\vartheta}}(a,b) &:= \int_{a}^{b}e^{-\int^{\xi}\frac{2\mu(\boldsymbol{\vartheta},y)}{\sigma^{2}(y)}dy}\int_{\xi}^{r}\frac{2}{\sigma^{2}(\eta)}e^{\int^{\eta}\frac{2\mu(\boldsymbol{\vartheta},y)}{\sigma^{2}(y)}dy}d\eta d\xi,
	\end{align}
	are bounded away from 0 on any compact set $\mathcal{K}\subset\boldsymbol{\Theta}$, and any $l<a<b<r$. Then $Y$ is $\boldsymbol{\vartheta}$-uniformly ergodic for any initial distribution $\nu$. 
\end{thm}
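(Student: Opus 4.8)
The plan is to establish the quantitative second-moment bound
\begin{align*}
\sup_{\boldsymbol{\vartheta}\in\mathcal{K}}\,\mathbb{E}^{(\boldsymbol{\vartheta})}_{\nu}\!\left[\left(\frac{1}{T}\int_{0}^{T}h(Y_{t})\,dt-\mathbb{E}^{(\boldsymbol{\vartheta})}\big[h(\xi)\big]\right)^{2}\right]=O\!\left(T^{-1}\right),
\end{align*}
from which \eqref{UnifErgDef} follows at once by Chebyshev's inequality; the route is the classical one for ergodic scalar diffusions (cf.\ \cite{Kuto}), made quantitative and then uniform over $\mathcal{K}$. Write $\bar h_{\boldsymbol{\vartheta}}:=\mathbb{E}^{(\boldsymbol{\vartheta})}[h(\xi)]$ and let $s'_{\boldsymbol{\vartheta}}(x)=\exp\{-\int^{x}2\mu(\boldsymbol{\vartheta},y)\sigma^{-2}(y)\,dy\}$, $m'_{\boldsymbol{\vartheta}}(x)=2\sigma^{-2}(x)\exp\{\int^{x}2\mu(\boldsymbol{\vartheta},y)\sigma^{-2}(y)\,dy\}$ be the scale and speed densities, so that $f^{Y}_{\boldsymbol{\vartheta}}=m'_{\boldsymbol{\vartheta}}/G^{Y}_{\boldsymbol{\vartheta}}$ by \eqref{GeneralInvariantDensity} and the identity $s'_{\boldsymbol{\vartheta}}\sigma^{2}m'_{\boldsymbol{\vartheta}}\equiv2$ holds.

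The engine is the Poisson equation. Fix an interior reference point $c\in(l,r)$ and put $g_{\boldsymbol{\vartheta}}(\xi):=\int_{l}^{\xi}(h(y)-\bar h_{\boldsymbol{\vartheta}})\,m'_{\boldsymbol{\vartheta}}(y)\,dy$, which vanishes at both endpoints because $\int_{l}^{r}(h-\bar h_{\boldsymbol{\vartheta}})m'_{\boldsymbol{\vartheta}}=0$ by the definition of $\bar h_{\boldsymbol{\vartheta}}$. Then $u_{\boldsymbol{\vartheta}}(x):=\int_{c}^{x}s'_{\boldsymbol{\vartheta}}(\xi)g_{\boldsymbol{\vartheta}}(\xi)\,d\xi$ solves $\tfrac12\sigma^{2}u_{\boldsymbol{\vartheta}}''+\mu(\boldsymbol{\vartheta},\cdot)u_{\boldsymbol{\vartheta}}'=h-\bar h_{\boldsymbol{\vartheta}}$ a.e., with the vanishing of $g_{\boldsymbol{\vartheta}}$ at $l,r$ encoding the reflecting/entrance behaviour there. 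As $|h-\bar h_{\boldsymbol{\vartheta}}|\le2\|h\|_{\infty}$, one has $|g_{\boldsymbol{\vartheta}}(\xi)|\le2\|h\|_{\infty}\min\{\int_{l}^{\xi}m'_{\boldsymbol{\vartheta}},\int_{\xi}^{r}m'_{\boldsymbol{\vartheta}}\}$, hence, extending \eqref{ellConditions} to $a=l$, $b=r$ as improper integrals,
\begin{align*}
\|u_{\boldsymbol{\vartheta}}\|_{\infty}\le 2\|h\|_{\infty}\big(\kappa^{l}_{\boldsymbol{\vartheta}}(l,c)+\kappa^{r}_{\boldsymbol{\vartheta}}(c,r)\big);
\end{align*}
by Fubini, $\kappa^{l}_{\boldsymbol{\vartheta}}(l,c)=\int_{l}^{c}m'_{\boldsymbol{\vartheta}}(y)\big(\int_{y}^{c}s'_{\boldsymbol{\vartheta}}\big)dy$ is Feller's test integral whose finiteness characterises $l$ being entrance or regular, and likewise $\kappa^{r}_{\boldsymbol{\vartheta}}(c,r)<\infty$, so $u_{\boldsymbol{\vartheta}}$ is bounded under the standing hypotheses. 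Dynkin's (Itô--Krylov) formula for $u_{\boldsymbol{\vartheta}}$ along \eqref{GeneralDiffusion} then gives
\begin{align*}
\frac{1}{T}\int_{0}^{T}\big(h(Y_{t})-\bar h_{\boldsymbol{\vartheta}}\big)\,dt=\frac{u_{\boldsymbol{\vartheta}}(Y_{T})-u_{\boldsymbol{\vartheta}}(Y_{0})}{T}-\frac{1}{T}\int_{0}^{T}u_{\boldsymbol{\vartheta}}'(Y_{t})\sigma(Y_{t})\,dW_{t},
\end{align*}
and started from $f^{Y}_{\boldsymbol{\vartheta}}$ the identity $s'_{\boldsymbol{\vartheta}}\sigma^{2}m'_{\boldsymbol{\vartheta}}\equiv2$ collapses the expected quadratic variation of the stochastic integral to $\tfrac{2T}{G^{Y}_{\boldsymbol{\vartheta}}}\int_{l}^{r}s'_{\boldsymbol{\vartheta}}(x)g_{\boldsymbol{\vartheta}}(x)^{2}\,dx\le8\|h\|_{\infty}^{2}T\big(\kappa^{l}_{\boldsymbol{\vartheta}}(l,c)+\kappa^{r}_{\boldsymbol{\vartheta}}(c,r)\big)$. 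Combining, from stationarity the left-hand side has second moment of order $T^{-1}$ with a constant that is an explicit increasing function of $\|h\|_{\infty}$ and of $\kappa^{l}_{\boldsymbol{\vartheta}}(l,c)+\kappa^{r}_{\boldsymbol{\vartheta}}(c,r)$.

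Two steps then remain: passing from stationarity to a general $\nu$, and making the constants uniform over $\mathcal{K}$. For the first I would fix $l<a<b<r$, split $\int_{0}^{T}$ at $\tau:=\inf\{t\ge0:Y_{t}\in[a,b]\}$, bound the initial segment by $2\|h\|_{\infty}(\tau\wedge T)/T$, and apply the strong Markov property at $\tau$ to reduce the remainder to the stationary-type estimate from a fixed interior starting point, using that $\sup_{x\in[l,r]}\mathbb{E}^{(\boldsymbol{\vartheta})}_{x}[\tau]=\kappa^{l}_{\boldsymbol{\vartheta}}(l,a)\vee\kappa^{r}_{\boldsymbol{\vartheta}}(b,r)$, which is finite by the same entrance/regular property. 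For the second I would use the hypothesis that $\kappa^{l}_{\boldsymbol{\vartheta}}(a,b)$, $\kappa^{r}_{\boldsymbol{\vartheta}}(a,b)$ are bounded away from $0$ on $\mathcal{K}$ — which in particular keeps $G^{Y}_{\boldsymbol{\vartheta}}\ge\kappa^{r}_{\boldsymbol{\vartheta}}(a,b)/\int_{a}^{b}s'_{\boldsymbol{\vartheta}}$ bounded away from $0$ — together with continuity of the integrands in $\boldsymbol{\vartheta}$ and compactness of $\mathcal{K}$, to upgrade the $\boldsymbol{\vartheta}$-wise finiteness of $\kappa^{l}_{\boldsymbol{\vartheta}}(l,c)$, $\kappa^{r}_{\boldsymbol{\vartheta}}(c,r)$, $\|u_{\boldsymbol{\vartheta}}\|_{\infty}$ and $\sup_{x}\mathbb{E}^{(\boldsymbol{\vartheta})}_{x}[\tau]$ to bounds uniform in $\boldsymbol{\vartheta}$. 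I expect this last point — the \emph{uniform} convergence on $\mathcal{K}$ of the improper integrals near the (possibly singular) entrance boundaries, i.e.\ that $g_{\boldsymbol{\vartheta}}$ decays at $l$ and $r$ fast enough, uniformly in $\boldsymbol{\vartheta}$, to absorb the blow-up of $s'_{\boldsymbol{\vartheta}}$ — to be the main technical obstacle, and it is exactly what the hypotheses \eqref{ellConditions}, in conjunction with the entrance/regular classification, are designed to supply. Assembling the bounds yields the displayed $O(T^{-1})$ estimate uniformly over $\mathcal{K}$, and \eqref{UnifErgDef} follows.
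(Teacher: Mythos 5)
Your route is genuinely different from the paper's. You solve the Poisson equation $\tfrac12\sigma^{2}u_{\boldsymbol{\vartheta}}''+\mu(\boldsymbol{\vartheta},\cdot)u_{\boldsymbol{\vartheta}}'=h-\bar h_{\boldsymbol{\vartheta}}$ in scale/speed form and extract an $O(T^{-1})$ second-moment bound from the resulting martingale decomposition; the paper instead chops the path into i.i.d.\ regeneration cycles between two interior levels $a<b$, invokes Theorem 3.2 of \cite{Locherbach} to reduce \eqref{UnifErgDef} to moments of the hitting times $T_a,T_b$, and bounds those by solving the backward ODE for $\mathbb{E}^{(\boldsymbol{\vartheta})}_{x}[T_b^{q}]$, obtaining $\mathbb{E}^{(\boldsymbol{\vartheta})}_{x}[T_b^{q}]\le q!\,\kappa^{l}_{\boldsymbol{\vartheta}}(l,b)^{q}$ and its mirror image. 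Your stationary-case computation is correct as far as it goes: the bound $|g_{\boldsymbol{\vartheta}}|\le 2\|h\|_{\infty}\min\{\int_{l}^{\cdot}m'_{\boldsymbol{\vartheta}},\int_{\cdot}^{r}m'_{\boldsymbol{\vartheta}}\}$, the estimate $\|u_{\boldsymbol{\vartheta}}\|_{\infty}\le 2\|h\|_{\infty}(\kappa^{l}_{\boldsymbol{\vartheta}}(l,c)+\kappa^{r}_{\boldsymbol{\vartheta}}(c,r))$, the collapse of the quadratic variation, and the identification of $\kappa^{l}_{\boldsymbol{\vartheta}}(l,c)$ with Feller's quantity $N(l)$ (finite precisely for entrance/regular boundaries, which is also how the paper justifies finiteness) all check out. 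What your approach buys is an explicit rate with transparent constants from stationarity; what the paper's buys is that the reduction to hitting times is insensitive to the initial law.

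There is, however, a genuine gap in the step from stationarity to a general $\nu$. Stopping at $\tau=\inf\{t:Y_t\in[a,b]\}$ and applying the strong Markov property leaves you started from a deterministic interior point, not from $f^{Y}_{\boldsymbol{\vartheta}}$, and your second-moment bound used stationarity essentially: $\mathbb{E}^{(\boldsymbol{\vartheta})}\big[\int_{0}^{T}(u_{\boldsymbol{\vartheta}}'\sigma)^{2}(Y_t)\,dt\big]=T\int_{l}^{r}(u_{\boldsymbol{\vartheta}}'\sigma)^{2}f^{Y}_{\boldsymbol{\vartheta}}$ fails from a point mass. From a point you need $\mathbb{E}^{(\boldsymbol{\vartheta})}_{x}\big[\int_{0}^{T}(u_{\boldsymbol{\vartheta}}'\sigma)^{2}(Y_t)\,dt\big]=O(T)$ uniformly in $x\in[a,b]$ and $\boldsymbol{\vartheta}\in\mathcal{K}$, and since $(u_{\boldsymbol{\vartheta}}'\sigma)^{2}=2\,s'_{\boldsymbol{\vartheta}}g_{\boldsymbol{\vartheta}}^{2}/m'_{\boldsymbol{\vartheta}}$ need not be bounded near the boundaries, this either requires an extra hypothesis or is proved via the regenerative identity $\mathbb{E}^{(\boldsymbol{\vartheta})}_{a}[\int_{0}^{R_1}\phi(Y_t)dt]=\mathbb{E}^{(\boldsymbol{\vartheta})}_{a}[R_1]\int\phi f^{Y}_{\boldsymbol{\vartheta}}$ — i.e.\ by rebuilding the machinery the paper uses. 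Relatedly, you have misplaced the role of hypothesis \eqref{ellConditions}: it is a \emph{lower} bound on interior integrals, used in the paper to bound the regeneration frequency $\ell_{\boldsymbol{\vartheta}}=\mathbb{E}^{(\boldsymbol{\vartheta})}_{a}[R_1]^{-1}$ from above; it does not supply the uniform-over-$\mathcal{K}$ \emph{upper} bounds on the improper boundary integrals $\kappa^{l}_{\boldsymbol{\vartheta}}(l,c),\kappa^{r}_{\boldsymbol{\vartheta}}(c,r)$ that your constants need (those come from continuity in $\boldsymbol{\vartheta}$ plus compactness, a point the paper also treats lightly), and your own invocation of it to lower-bound $G^{Y}_{\boldsymbol{\vartheta}}$ is not actually used anywhere since the $G$'s cancel in your quadratic-variation estimate. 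Finally, a minor technical point: for merely bounded measurable $h$, $u_{\boldsymbol{\vartheta}}$ is $C^{1}$ with absolutely continuous derivative, so an Itô--Krylov/Tanaka-type formula is needed rather than classical Itô, and in the regular-boundary case one must check its validity across visits to $l$ and $r$.
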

\noindent We postpone the proof to Appendix \ref{appendix1}.\\ \newline
\noindent Note that the definition of $\boldsymbol{\vartheta}$-uniform ergodicity involves only bounded functions $h$, however the result above can be extended to a specific class of unbounded functions if one restricts their attention to diffusions on $[l,r]$ where $-\infty < l < r < \infty$ are both entrance boundaries.
\begin{thm}\label{UnifErgForUnbounded}
	Let $Y$ be as in Theorem \ref{UnifErgThm}, and suppose that all the conditions stated there hold, but that both $l$ and $r$ are now entrance boundaries. Assume further that the function $h$ is integrable with respect to the invariant density $f^{Y}_{\boldsymbol{\vartheta}}$ but possibly unbounded, that for any $l<a<b<r$, $\sup_{y\in[a,b]}h(y) <\infty$, and that for any $x<b$ the following hold
	\begin{align}
		&\sup_{\boldsymbol{\vartheta}\in\mathcal{K}}\int_{x}^{b}e^{-\int^{\xi}\frac{2\mu(\boldsymbol{\vartheta},y)}{\sigma^{2}(y)}dy}\int_{l}^{\xi}\frac{2h(\eta)}{\sigma^{2}(\eta)}e^{\int^{\eta}\frac{2\mu(\boldsymbol{\vartheta},y)}{\sigma^{2}(y)}dy}d\eta d\xi < \infty, \label{Unbounded1}\\
		&\sup_{\boldsymbol{\vartheta}\in\mathcal{K}}\int_{x}^{b}e^{-\int^{\xi}\frac{2\mu(\boldsymbol{\vartheta},y)}{\sigma^{2}(y)}dy}\int_{l}^{\xi}\frac{2h(\eta)}{\sigma^{2}(\eta)}e^{\int^{\eta}\frac{2\mu(\boldsymbol{\vartheta},y)}{\sigma^{2}(y)}dy}\mathbb{E}^{(\boldsymbol{\vartheta})}_{\eta}\left[\int_{0}^{T_{b}}h(Y_t)dt\right]d\eta d\xi < \infty, \label{Unbounded2}\\
		&\sup_{\boldsymbol{\vartheta}\in\mathcal{K}}\int_{l}^{r}\mathbb{E}^{(\boldsymbol{\vartheta})}_{x}\left[\int_{0}^{T_{b}}h(Y_t)dt\right]\nu(\boldsymbol{\vartheta},dx)\label{Unbounded3} < \infty,
	\end{align}
	for any compact set $\mathcal{K}\subset\boldsymbol{\Theta}$, and $T_{b} := \inf\{t\geq 0 : Y_{t} = b \}$. Then \eqref{UnifErgDef} holds for the function $h$.
\end{thm}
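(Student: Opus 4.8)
The plan is to run a renewal--reward (regeneration) argument, as in the proof of Theorem \ref{UnifErgThm} but now tracking the possibly unbounded $h$; conditions \eqref{Unbounded1}--\eqref{Unbounded3} are precisely what is needed to keep the relevant block functionals integrable \emph{uniformly} over $\mathcal{K}$, in the first and second moments. First I would reduce to $h\ge 0$ (split into positive and negative parts) and, since \eqref{Unbounded1}--\eqref{Unbounded3} are phrased at the boundary $l$, to $h$ bounded away from $r$ (the general case follows from a symmetric decomposition $h=h_l+h_r$ with $h_l$ bounded near $r$ and $h_r$ bounded near $l$). Fix $l<b'<b<r$ with $h$ bounded on $[b',r)$, set $\sigma_0:=T_b$ and recursively $\theta_k:=\inf\{t>\sigma_{k-1}:Y_t=b'\}$, $\sigma_k:=\inf\{t>\theta_k:Y_t=b\}$. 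Under $\mathbb{P}^{(\boldsymbol{\vartheta})}_b$ the excursion blocks $[\sigma_{k-1},\sigma_k)$, $k\ge1$, are i.i.d.; writing $L$ for a block length, $\Phi:=\int_0^{L}h(Y_t)\,dt$ for its reward and $\mu_L(\boldsymbol{\vartheta}):=\mathbb{E}^{(\boldsymbol{\vartheta})}_b[L]$, a Green's-function computation shows the expected occupation measure of a block equals $\mu_L(\boldsymbol{\vartheta})f^Y_{\boldsymbol{\vartheta}}(y)\,dy$, so that $\mathbb{E}^{(\boldsymbol{\vartheta})}_b[\Phi]=\mu_L(\boldsymbol{\vartheta})\mathbb{E}^{(\boldsymbol{\vartheta})}[h(\xi)]$.

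Next I would translate the hypotheses into uniform moment bounds. Rewriting the iterated integrals through the scale density $s'(\xi)=\exp(-\int^\xi 2\mu(\boldsymbol{\vartheta},y)\sigma^{-2}(y)\,dy)$ and speed density $m'(\eta)=2\sigma^{-2}(\eta)\exp(\int^\eta 2\mu(\boldsymbol{\vartheta},y)\sigma^{-2}(y)\,dy)$, and using Fubini together with the Green's-function formula for $Y$ killed at $b$ with $l$ inaccessible (valid because $l$ is entrance, so $s(l)=-\infty$), one finds that \eqref{Unbounded1} equals $\sup_{\boldsymbol{\vartheta}\in\mathcal{K}}\mathbb{E}^{(\boldsymbol{\vartheta})}_{x}[\int_0^{T_b}h(Y_t)\,dt]<\infty$ and, via the identity $\mathbb{E}_x[(\int_0^{T_b}h\,dt)^2]=2\mathbb{E}_x[\int_0^{T_b}h(Y_t)\,\mathbb{E}^{(\boldsymbol{\vartheta})}_{Y_t}[\int_0^{T_b}h(Y_s)\,ds]\,dt]$, that \eqref{Unbounded2} equals $\tfrac12\sup_{\boldsymbol{\vartheta}\in\mathcal{K}}\mathbb{E}^{(\boldsymbol{\vartheta})}_{x}[(\int_0^{T_b}h(Y_t)\,dt)^2]<\infty$, while \eqref{Unbounded3} is $\sup_{\boldsymbol{\vartheta}\in\mathcal{K}}\mathbb{E}^{(\boldsymbol{\vartheta})}_\nu[\int_0^{\sigma_0}h(Y_t)\,dt]<\infty$. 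Splitting $\Phi$ at $T_{b'}$ by the strong Markov property, $\mathbb{E}^{(\boldsymbol{\vartheta})}_b[\Phi]=\mathbb{E}^{(\boldsymbol{\vartheta})}_b[\int_0^{T_{b'}}h\,dt]+\mathbb{E}^{(\boldsymbol{\vartheta})}_{b'}[\int_0^{T_b}h\,dt]$, the first summand being $\le\sup_{[b',r)}h\cdot\mathbb{E}^{(\boldsymbol{\vartheta})}_b[T_{b'}]$ (finite and bounded on $\mathcal{K}$ by the reduction and standard hitting-time bounds) and the second bounded by \eqref{Unbounded1}; a similar split yields $\sup_{\mathcal{K}}\mathbb{E}^{(\boldsymbol{\vartheta})}_b[\Phi^2]<\infty$ from \eqref{Unbounded2}. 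Finally $0<\inf_{\mathcal{K}}\mu_L\le\sup_{\mathcal{K}}\mathbb{E}^{(\boldsymbol{\vartheta})}_b[L^2]<\infty$, the lower bound because $\mu_L\ge\mathbb{E}^{(\boldsymbol{\vartheta})}_b[T_{b'}]$ is comparable to $\kappa^r_{\boldsymbol{\vartheta}}(b',b)$, which is bounded away from $0$, and the upper bound from uniform bounds on hitting-time moments for diffusions with entrance boundaries on a bounded interval.

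Then I would conclude from the decomposition, with $N_T:=\max\{k:\sigma_k\le T\}$,
\begin{align*}
	\frac1T\int_0^T h(Y_t)\,dt=\frac{N_T}{T}\cdot\frac{1}{N_T}\sum_{k=1}^{N_T}\Phi_k+\frac1T\int_0^{\sigma_0}h(Y_t)\,dt+\frac1T\int_{\sigma_{N_T}}^{T}h(Y_t)\,dt.
\end{align*}
By $\sup_{\mathcal{K}}\mathbb{E}^{(\boldsymbol{\vartheta})}_b[\Phi^2]<\infty$ and a maximal inequality, $\frac{1}{N_T}\sum_{k=1}^{N_T}\Phi_k\to\mathbb{E}^{(\boldsymbol{\vartheta})}_b[\Phi]$ uniformly over $\mathcal{K}$ in $\mathbb{P}^{(\boldsymbol{\vartheta})}_\nu$-probability; by the bounds on $\mathbb{E}^{(\boldsymbol{\vartheta})}_b[L]$ and $\mathbb{E}^{(\boldsymbol{\vartheta})}_b[L^2]$, $N_T/T\to 1/\mu_L(\boldsymbol{\vartheta})$ uniformly, so the first term converges to $\mathbb{E}^{(\boldsymbol{\vartheta})}_b[\Phi]/\mu_L(\boldsymbol{\vartheta})=\mathbb{E}^{(\boldsymbol{\vartheta})}[h(\xi)]$; by \eqref{Unbounded3} and Markov's inequality the second term is $O_{\mathbb{P}}(T^{-1})$ uniformly; and the third is at most $\Phi_{N_T+1}/T$, which by $\sup_{\mathcal{K}}\mathbb{E}^{(\boldsymbol{\vartheta})}_b[\Phi^2]<\infty$ and a union bound over blocks — using that $\{N_T+1\ge k\}$ depends only on the first $k-1$ blocks — tends to $0$ uniformly. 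Combining these yields \eqref{UnifErgDef} for $h$.

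The \emph{main obstacle} is the uniformity over $\mathcal{K}$: pointwise in $\boldsymbol{\vartheta}$ this is the classical renewal--reward proof of the ergodic theorem, and the genuine work is to see that the rates at which the block average converges to $\mathbb{E}^{(\boldsymbol{\vartheta})}_b[\Phi]$ and $N_T/T$ to $1/\mu_L(\boldsymbol{\vartheta})$ can be taken uniform over $\mathcal{K}$ — which is exactly what the uniform \emph{second}-moment bound \eqref{Unbounded2} supplies, with \eqref{Unbounded1} providing the first-moment/integrability input, \eqref{Unbounded3} absorbing the dependence on the initial law $\nu$, and the $\kappa$-conditions already present in Theorem \ref{UnifErgThm} keeping the number of completed blocks of order $T$. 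A secondary difficulty, dispatched by the Fubini-plus-Green's-function identities above, is that the double integrals in \eqref{Unbounded1}--\eqref{Unbounded3} are not at first sight the probabilistically natural quantities; it is precisely here that the entrance (rather than merely regular) nature of the boundaries — equivalently $s(l)=-\infty$ — is used, which is why this extension is available only in that case.
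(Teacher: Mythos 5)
Your proposal follows essentially the same route as the paper's Appendix B: a regenerative (renewal--reward) decomposition of the path into i.i.d.\ excursion blocks between hitting times, identification of the iterated integrals \eqref{Unbounded1}--\eqref{Unbounded3} with the first and second moments of $\int_0^{T_b}h(Y_t)\,dt$ via the Green's-function/ODE recursions, and Markov/Kolmogorov inequalities to make the block-average and $N_T/T$ convergences uniform over $\mathcal{K}$. The only cosmetic difference is that you decompose the time average directly while the paper (following L\"ocherbach) decomposes the deviation probability into five terms, and your explicit reduction to $h$ bounded near $r$ makes transparent a step the paper handles implicitly through $\sup_{y\in[a,b]}h(y)<\infty$; the substance is the same.
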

\noindent The proof can be found in Appendix \ref{appendix2}.
\begin{remark}
	Note that the above conditions imply that $h$ is only unbounded at the end points (because the supremum between $a$ and $b$ of $h$ is finite for any $l<a<b<r$), which in particular ensures that all integrals of the form $\int_{0}^{T}h(Y_t)dt$ above are well-defined as both boundary points are unattainable (in view of them being entrance).
\end{remark}
\subsection{The Wright--Fisher diffusion}
\noindent We now give a brief overview of the Wright--Fisher diffusion before showing that the diffusion is ergodic uniformly in the selection and mutation parameters, and subsequently use this to prove the uniform local asymptotic normality (LAN) of the family of measures associated to the solution of the SDE. \\ \newline
\noindent Consider an infinite haploid population undergoing selection and mutation, where we are interested in two alleles $A_{1}$ and $A_{2}$. Suppose that $\boldsymbol{\vartheta}=(s,\theta_1,\theta_2)\in\Theta=\mathbb{R}\times(0,\infty)^{2}$ are the selection and mutation parameters respectively, where $s$ describes the extent to which allele $A_{2}$ is favoured over $A_{1}$, alleles of type $A_{1}$ mutate to $A_{2}$ at rate $\theta_{1}/2$, and those of type $A_{2}$ mutate to $A_{1}$ at rate $\theta_{2}/2$. Let $X_{t}$ denote the frequency of $A_{2}$ in the population at time $t$. Then the dynamics of $X_{t}$ can be described by a diffusion process on $[0,1]$, which, after expressing the parameters on an appropriate timescale, satisfies the SDE
\begin{align}
	dX_{t} &= \mu_{\textnormal{WF}}(\boldsymbol{\vartheta},X_{t})dt + \sigma_{\textnormal{WF}}(X_{t})dW_{t} \nonumber\\ 
	&:= \frac{1}{2}\left(sX_{t}(1-X_{t}) - \theta_{2}X_{t} + \theta_{1}(1-X_{t})\right)dt + \sqrt{X_{t}(1-X_{t})}dW_{t} \label{WFDiff},
\end{align}	
with $X_0\sim\nu$ for some initial distribution $\nu$, and $[0,T]$ the observation interval. We point out that (\ref{WFDiff}) with $s=0$ is commonly referred to as the \emph{neutral} Wright--Fisher diffusion, whilst $s\neq0$ is known as the \emph{non-neutral} case. A strong solution to (\ref{WFDiff}) exists by the Yamada--Watanabe condition (see Theorem 3.2, Chapter IV in \cite{IkedaWatanabe}), but weak uniqueness suffices for the results in Section \ref{ResultsSection}. In abuse of notation, we redefine $\mathbb{P}^{(\boldsymbol{\vartheta})}_{\nu}$ to be the law induced on $C_{T}([0,1])$ by the solution to (\ref{WFDiff}) when the true diffusion parameters are $\boldsymbol{\vartheta}=(s,\theta_1,\theta_2)$, and $X_0 \sim\nu$, and similarly for the expectation with respect to $\mathbb{P}^{(\boldsymbol{\vartheta})}_{\nu}$, $\mathbb{E}^{(\boldsymbol{\vartheta})}_{\nu}$, and with respect to the stationary distribution, $\mathbb{E}^{(\boldsymbol{\vartheta})}$ (the existence of which we discuss below). \\ \newline
\noindent We assume that $\theta_{1}, \theta_{2} > 0$, for if at least one is 0 then the diffusion is absorbed in finite time and we are back in the regime studied by Watterson \cite{Watterson}. The boundary behaviour depends on whether the mutation parameters are either less than or greater or equal to 1, but in either case the diffusion is ergodic as long as $\theta_{1}, \theta_{2} > 0$ (see Lemma 2.1, Chapter 10 in \cite{EthierKurtzMP}). \\ \newline
\noindent Substituting $\mu_{\textnormal{WF}}$ and $\sigma_{\textnormal{WF}}$ into \eqref{GeneralInvariantDensity} and simplifying terms leads to the following density for the stationary distribution of the Wright--Fisher diffusion (\ref{WFDiff}) 
\begin{equation}\label{StationaryDensity}
	f_{\boldsymbol{\vartheta}}(x) = \frac{1}{G_{\boldsymbol{\vartheta}}}e^{sx}x^{\theta_{1}-1}(1-x)^{\theta_{2}-1}, \qquad x \in (0,1),
\end{equation}
where $G_{\boldsymbol{\vartheta}}$ is the normalising constant 
\begin{align}\label{BoundOnG}
	G_{\boldsymbol{\vartheta}} = \int_0^1 e^{sx}x^{\theta_{1}-1}(1-x)^{\theta_{2}-1}dx \leq \max\{e^{s},1\}B(\theta_{1},\theta_{2}) < \infty,
\end{align}
with
\begin{align*}
	B(\theta_{1},\theta_{2}) := \int_{0}^{1}x^{\theta_{1}-1}(1-x)^{\theta_{2}-1}dx
\end{align*}
\noindent the beta function. As above we will always assume that $\xi\sim f_{\boldsymbol{\vartheta}}$. \\ \newline 
\noindent To the best of our knowledge, it has not been shown that the Wright--Fisher diffusion is ergodic \emph{uniformly} in its parameters, which motivates the following corollary to Theorem \ref{UnifErgThm}.
\begin{corr}\label{UnifErgThm4WF}
	The Wright--Fisher diffusion with mutation and selection parameters $\boldsymbol{\vartheta}=(s,\theta_1,\theta_2)$ is $\boldsymbol{\vartheta}$-uniformly ergodic for any initial distribution $\nu$.
\end{corr}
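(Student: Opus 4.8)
The plan is to obtain Corollary \ref{UnifErgThm4WF} as a direct application of Theorem \ref{UnifErgThm}. The Wright--Fisher diffusion solves an SDE of the form \eqref{GeneralDiffusion} on $[0,1]$, is positive recurrent for all $\theta_1,\theta_2>0$, and has boundaries $0$ and $1$ that are regular (when the relevant mutation parameter is $<1$) or entrance (when it is $\ge 1$), so all the structural hypotheses of Theorem \ref{UnifErgThm} hold; it therefore remains only to check that the quantities $\kappa^{l}_{\boldsymbol{\vartheta}}(a,b)$ and $\kappa^{r}_{\boldsymbol{\vartheta}}(a,b)$ of \eqref{ellConditions} (with $l=0$, $r=1$) are bounded away from $0$ as $\boldsymbol{\vartheta}=(s,\theta_1,\theta_2)$ ranges over an arbitrary compact $\mathcal{K}\subset\Theta=\mathbb{R}\times(0,\infty)^2$, for each fixed $0<a<b<1$.

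First I would substitute the Wright--Fisher coefficients into \eqref{ellConditions}. The partial-fraction identity $2\mu_{\textnormal{WF}}(\boldsymbol{\vartheta},y)/\sigma_{\textnormal{WF}}^{2}(y)=s-\theta_2/(1-y)+\theta_1/y$ shows, upon integration, that $\exp\{\int^{\eta}(2\mu_{\textnormal{WF}}/\sigma_{\textnormal{WF}}^{2})\,dy\}$ is a positive constant multiple of $e^{s\eta}\eta^{\theta_1}(1-\eta)^{\theta_2}$, hence $(2/\sigma_{\textnormal{WF}}^{2}(\eta))\exp\{\int^{\eta}(2\mu_{\textnormal{WF}}/\sigma_{\textnormal{WF}}^{2})\,dy\}$ is a positive constant multiple of the unnormalised stationary density $e^{s\eta}\eta^{\theta_1-1}(1-\eta)^{\theta_2-1}$ of \eqref{StationaryDensity}. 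The constant produced by the choice of antiderivative cancels between the inner and outer exponentials in \eqref{ellConditions}, leaving
\begin{align*}
	\kappa^{l}_{\boldsymbol{\vartheta}}(a,b)&=2\int_{a}^{b}e^{-s\xi}\xi^{-\theta_1}(1-\xi)^{-\theta_2}\left(\int_{0}^{\xi}e^{s\eta}\eta^{\theta_1-1}(1-\eta)^{\theta_2-1}\,d\eta\right)d\xi,\\
	\kappa^{r}_{\boldsymbol{\vartheta}}(a,b)&=2\int_{a}^{b}e^{-s\xi}\xi^{-\theta_1}(1-\xi)^{-\theta_2}\left(\int_{\xi}^{1}e^{s\eta}\eta^{\theta_1-1}(1-\eta)^{\theta_2-1}\,d\eta\right)d\xi.
\end{align*}
The inner integral in $\kappa^{l}$ converges at $0$ because $\theta_1>0$, the one in $\kappa^{r}$ converges at $1$ because $\theta_2>0$, and on $[a,b]\subset(0,1)$ the remaining factors are continuous and bounded; hence both quantities are finite, and they are strictly positive for every $\boldsymbol{\vartheta}\in\Theta$ since all integrands are strictly positive on a non-degenerate domain.

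Next I would upgrade this pointwise positivity to a uniform lower bound over $\mathcal{K}$ using crude explicit estimates (a continuity-plus-compactness argument would also work, but is not needed). On $\mathcal{K}$ we may take $|s|\le M$ and $\delta\le\theta_1,\theta_2\le M$ for some $M\ge1$ and $\delta\in(0,1)$; since $\xi,\eta\in(0,1)$ this gives $e^{-s\xi}\xi^{-\theta_1}(1-\xi)^{-\theta_2}\ge e^{-M}$ and $e^{s\eta}\eta^{\theta_1-1}(1-\eta)^{\theta_2-1}\ge e^{-M}\eta^{M-1}(1-\eta)^{M-1}$. Bounding the inner integral in $\kappa^{l}$ from below by its restriction to $(0,a)$, that in $\kappa^{r}$ by its restriction to $(b,1)$, and the outer factor by $e^{-M}$ on $[a,b]$, one obtains, for instance,
\[
\kappa^{l}_{\boldsymbol{\vartheta}}(a,b)\ge \frac{2(b-a)e^{-2M}(1-a)^{M-1}a^{M}}{M},\qquad \kappa^{r}_{\boldsymbol{\vartheta}}(a,b)\ge \frac{2(b-a)e^{-2M}b^{M-1}(1-b)^{M}}{M},
\]
both strictly positive and independent of $\boldsymbol{\vartheta}\in\mathcal{K}$. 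This is precisely the hypothesis of Theorem \ref{UnifErgThm}, which then yields $\boldsymbol{\vartheta}$-uniform ergodicity of the Wright--Fisher diffusion for any initial distribution $\nu$.

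Given Theorem \ref{UnifErgThm}, the argument is essentially bookkeeping and I do not foresee a genuine obstacle; the only point meriting some care is the convergence of the inner integrals at the endpoints $0$ and $1$, which is harmless precisely because the mutation parameters $\theta_1,\theta_2$ are bounded away from $0$ on any compact $\mathcal{K}$.
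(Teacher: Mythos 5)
Your proposal is correct and follows essentially the same route as the paper: verify the structural hypotheses of Theorem \ref{UnifErgThm}, compute $\kappa^{l}_{\boldsymbol{\vartheta}}$ and $\kappa^{r}_{\boldsymbol{\vartheta}}$ explicitly from the Wright--Fisher coefficients, restrict the inner integrals to $(0,a)$ and $(b,1)$ respectively, and extract a uniform positive lower bound over $\mathcal{K}$. The only difference is cosmetic: you bound everything via $|s|\le M$ and $\theta_i\le M$ to get fully explicit constants, whereas the paper keeps $\min\{e^{\pm s},1\}$ and $\bar\theta_i=\sup_{\mathcal K}\theta_i$ in its bounds; both yield the same conclusion.
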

\begin{proof}
	We show that the conditions of Theorem \ref{UnifErgThm} hold for the Wright--Fisher diffusion. Positive recurrence follows immediately from \eqref{BoundOnG}, whilst the existence of a unique strong solution is guaranteed by the Yamada--Watanabe condition. That the boundary points 0 and 1 are either entrance or regular is a consequence of the fact that the mutation parameters are assumed to be strictly positive (see (6.18) and (6.19) in \cite[Chapter 15, Section 6]{KarlinTaylor}). It remains to show that both expressions in \eqref{ellConditions} are bounded away from 0 for any $\mathcal{K}\subset\mathbb{R}\times(0,\infty)^{2}$. To this end let 
	$\bar\theta_1 := \sup_{\boldsymbol\vartheta \in \mathcal{K}} \theta_1$, $\bar\theta_2 := \sup_{\boldsymbol\vartheta \in \mathcal{K}} \theta_2$. Then
	\begin{align}
		&\int_{a}^{b}e^{-\int^{\xi}\frac{2\mu_{\textnormal{WF}}(\boldsymbol{\vartheta},y)}{\sigma^{2}_{\textnormal{WF}}(y)}dy}\int_{0}^{\xi}\frac{1}{\sigma^{2}_{\textnormal{WF}}(\eta)}e^{\int^{\eta}\frac{2\mu_{\textnormal{WF}}(\boldsymbol{\vartheta},y)}{\sigma^{2}_{\textnormal{WF}}(y)}dy}d\eta d\xi \nonumber \\
		&\qquad{}= \int_{a}^{b}2e^{-s\xi}\xi^{-\theta_{1}}(1-\xi)^{-\theta_{2}}\int_{0}^{\xi}e^{s\eta}\eta^{\theta_{1}-1}(1-\eta)^{\theta_{2}-1}d\eta d\xi \nonumber \\
		&\qquad{}\geq 2\min\{ e^{-s},1 \}\int_{a}^{b} \xi^{-\theta_{1}}(1-\xi)^{-\theta_{2}}d\xi\int_{0}^{a}\eta^{\theta_{1}-1}(1-\eta)^{\theta_{2}-1}d\eta  \nonumber \\
		&\qquad{}\geq 2\min\{e^{-s},1\}(b-a)\frac{a^{\theta_1}}{\theta_1}(1-a)^{\bar\theta_2-1}, \label{case1ell1}\\
		&\int_{a}^{b}e^{-\int^{\xi}\frac{2\mu_{\textnormal{WF}}(\boldsymbol{\vartheta},y)}{\sigma^{2}_{\textnormal{WF}}(y)}dy}\int_{\xi}^{1}\frac{1}{\sigma^{2}_{\textnormal{WF}}(\eta)}e^{\int^{\eta}\frac{2\mu_{\textnormal{WF}}(\boldsymbol{\vartheta},y)}{\sigma^{2}_{\textnormal{WF}}(y)}dy}d\eta d\xi \nonumber \\
		&\qquad{}= \int_{a}^{b}2e^{-s\xi}\xi^{-\theta_{1}}(1-\xi)^{-\theta_{2}}\int_{\xi}^{1}e^{s\eta}\eta^{\theta_{1}-1}(1-\eta)^{\theta_{2}-1}d\eta d\xi \nonumber \\
		&\qquad{}\geq 2\min\{ e^{s},1 \}\int_{a}^{b} \xi^{-\theta_{1}}(1-\xi)^{-\theta_{2}}d\xi\int_{b}^{1}\eta^{\theta_{1}-1}(1-\eta)^{\theta_{2}-1}d\eta \nonumber \\
		&\qquad{}\geq 2\min\{e^{s},1\}(b-a)\frac{(1-b)^{\theta_2}}{\theta_2}b^{\bar\theta_1-1}\label{case1ell2},
	\end{align}
	which follows by observing that
	\begin{align*}
		\xi^{-\theta_1}(1-\xi)^{-\theta_2} &> 1 & \forall \xi&\in(a,b), \forall\theta_1,\theta_2>0, \\
		(1-\eta)^{\theta_2 -1} &\geq (1-a)^{\bar{\theta}_{2}-1} & \forall\eta&\in(0,a), \\
		\eta^{\theta_1 -1} &\geq b^{\bar{\theta}_{1}-1} & \forall\eta&\in(b,1).
	\end{align*}
	As the RHS of both \eqref{case1ell1} and \eqref{case1ell2} are bounded away from 0 on $\mathcal{K}$, the result follows by applying Theorem \ref{UnifErgThm}.
\end{proof}
\noindent For the remainder of this section we restrict our attention to the parameter space $\boldsymbol{\Theta}\subset\mathbb{R}\times[1,\infty)^{2}$, where $\boldsymbol{\Theta}$ is open and bounded, for if either of the mutation parameters were less than 1 then the measures $\mathbb{P}^{(\boldsymbol{\vartheta})}_{\nu}$ within this region would be mutually singular with respect to one another and thus their Radon--Nikodym derivative undefined. Restricting our attention to mutation parameters within the range $[1,\infty)^{2}$ thus ensures that the family of measures $\{\mathbb{P}^{(\boldsymbol{\vartheta})}_{\nu},\boldsymbol{\vartheta}\in\boldsymbol{\Theta}\}$ are equivalent, and we have that 
\begin{align}\label{WFLikelihoodVartheta}
	\frac{d\mathbb{P}^{(\boldsymbol{\vartheta}')}_{\nu}}{d\mathbb{P}^{(\boldsymbol{\vartheta})}_{\nu}}(X^T) =  \frac{\nu(\boldsymbol{\vartheta}',X_0)}{\nu(\boldsymbol{\vartheta},X_0)}\exp\Bigg\{&\int_{0}^{T}\left(\frac{\mu_{\textnormal{WF}}(\boldsymbol{\vartheta}',X_t)-\mu_{\textnormal{WF}}(\boldsymbol{\vartheta},X_t)}{\sigma_{\textnormal{WF}}(X_t)}\right)dW_{t} \nonumber\\
	&\quad- \frac{1}{2}\int_{0}^{T}\left(\frac{\mu_{\textnormal{WF}}(\boldsymbol{\vartheta}',X_t)-\mu_{\textnormal{WF}}(\boldsymbol{\vartheta},X_t)}{\sigma_{\textnormal{WF}}(X_t)}\right)^{2}dt\Bigg\}
\end{align}
with $\mathbb{P}^{(\boldsymbol{\vartheta})}_{\nu}$-probability 1, where we assume the initial distributions $\{\nu(\boldsymbol{\vartheta},\cdot)\}_{\boldsymbol{\vartheta}\in\boldsymbol{\Theta}}$ are mutually equivalent and admit a density with respect to some common dominating measure $\lambda(\cdot)$, which in abuse of notation we denote by $\nu(\boldsymbol{\vartheta},\cdot)$. Proofs of the above claims regarding the equivalence of the Wright--Fisher diffusion and the form of the Radon--Nikodym derivative can be found in \cite{dawson}, Lemma 7.2.2 and Section 10.1.1. We emphasise here that we have allowed the starting distribution $\nu$ to depend on the parameters, as is evident from the first ratio in \eqref{WFLikelihoodVartheta}. However if there is no such dependence then this ratio is equal to 1 and our results still apply. \\ \newline
\noindent Furthermore, restricting to mutations greater than or equal to 1 ensures that the diffusion boundaries now become entrance (see equations (6.18) and (6.19) in \cite[Chapter 15, Section 6]{KarlinTaylor}), and as done in Theorem \ref{UnifErgForUnbounded}, \eqref{UnifErgDef} can be extended for a particular class of unbounded functions. We focus on two such functions for this class of diffusions, as they turn out to be an essential ingredient necessary to prove local asymptotic normality. 
\begin{corr}\label{UnifErgForUnboundedWF}
	For the Wright--Fisher diffusion with mutation and selection parameters $\boldsymbol{\vartheta}\in\boldsymbol{\Theta}\subset\mathbb{R}\times[1,\infty)^{2}$ (for $\boldsymbol{\Theta}$ an open bounded set) with initial distribution $\nu$ satisfying \eqref{ConditionsOnNu2}, $\boldsymbol{\vartheta}$-uniform ergodicity \eqref{UnifErgDef} holds also for the functions $h(x) = (1-x)x^{-1}$ and $h(x) = (1-x)^{-1}x$. The result holds in particular for the case $\nu = f_{\boldsymbol{\vartheta}}$.
\end{corr}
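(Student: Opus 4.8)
The plan is to verify, for each of the two functions, the hypotheses of Theorem~\ref{UnifErgForUnbounded} with $l=0$, $r=1$ and $\sigma^2_{\textnormal{WF}}(x)=x(1-x)$. The two cases are treated in exactly the same way, with the roles of the boundary points $0$ and $1$ (and of $\theta_1$ and $\theta_2$) interchanged throughout, since every integral below is symmetric under $x\leftrightarrow 1-x$; so I only spell out $h(x)=(1-x)x^{-1}$. Since $\boldsymbol{\Theta}$ is open and contained in $\mathbb{R}\times[1,\infty)^2$ it is in fact contained in $\mathbb{R}\times(1,\infty)^2$, so on any compact $\mathcal{K}\subset\boldsymbol{\Theta}$ one has $\underline s\le s\le\bar s$ and $1<\underline\theta_i\le\theta_i\le\bar\theta_i<\infty$; this strict lower bound on the mutation parameters is what keeps every integral below finite. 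Using $f_{\boldsymbol{\vartheta}}(x)\propto e^{sx}x^{\theta_1-1}(1-x)^{\theta_2-1}$, the integrability of $h$ against $f_{\boldsymbol{\vartheta}}$ and the finiteness of $\sup_{y\in[a,b]}h(y)$ for $0<a<b<1$ are immediate, the former because the integrand is proportional to $e^{sx}x^{\theta_1-2}(1-x)^{\theta_2}$, integrable at $0$ since $\theta_1-2>-1$.

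The heart of the matter is the uniform control over $\mathcal{K}$ of $w_{\boldsymbol{\vartheta}}(x):=\mathbb{E}^{(\boldsymbol{\vartheta})}_x\big[\int_0^{T_b}h(X_t)\,dt\big]$, which (through the nested integrals) is exactly what \eqref{Unbounded1}--\eqref{Unbounded3} ask about; since $0$ and $1$ are entrance, this quantity is well-defined as soon as $w_{\boldsymbol{\vartheta}}$ is finite. By standard one-dimensional diffusion theory, $w_{\boldsymbol{\vartheta}}$ solves the inhomogeneous equation $\mathcal{L}_{\textnormal{WF}}w=-h$ associated with \eqref{WFDiff}, with $w(b)=0$ and the finiteness condition appropriate to an entrance point, and admits the scale--speed representation
\begin{align*}
	w_{\boldsymbol{\vartheta}}(x) &= \int_{x}^{b}s_{\boldsymbol{\vartheta}}'(\xi)\int_{0}^{\xi}h(\eta)\,m_{\boldsymbol{\vartheta}}'(\eta)\,d\eta\,d\xi \qquad\text{for } x<b, \\
	w_{\boldsymbol{\vartheta}}(x) &= \int_{b}^{x}s_{\boldsymbol{\vartheta}}'(\xi)\int_{\xi}^{1}h(\eta)\,m_{\boldsymbol{\vartheta}}'(\eta)\,d\eta\,d\xi \qquad\text{for } x>b,
\end{align*}
where $s_{\boldsymbol{\vartheta}}'(\xi)=e^{-s\xi}\xi^{-\theta_1}(1-\xi)^{-\theta_2}$ and $m_{\boldsymbol{\vartheta}}'(\eta)=2e^{s\eta}\eta^{\theta_1-1}(1-\eta)^{\theta_2-1}$ are the scale and speed densities; note the first line is precisely the integral appearing in \eqref{Unbounded1}. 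Since $h(\eta)m_{\boldsymbol{\vartheta}}'(\eta)=2e^{s\eta}\eta^{\theta_1-2}(1-\eta)^{\theta_2}$, for $x<b$ one estimates $\int_0^\xi h\,m_{\boldsymbol{\vartheta}}'\le C_{\mathcal{K}}\xi^{\theta_1-1}$, hence $s_{\boldsymbol{\vartheta}}'(\xi)\int_0^\xi h\,m_{\boldsymbol{\vartheta}}'\le C_{\mathcal{K}}\xi^{-1}(1-\xi)^{-\theta_2}$ and therefore $w_{\boldsymbol{\vartheta}}(x)\le C_{\mathcal{K}}(1+|\log x|)$; for $x>b$, bounding $h$ crudely by $b^{-1}$ on $[b,1]$ gives $s_{\boldsymbol{\vartheta}}'(\xi)\int_\xi^1 h\,m_{\boldsymbol{\vartheta}}'\le C_{\mathcal{K}}$ and hence $w_{\boldsymbol{\vartheta}}(x)\le C_{\mathcal{K}}$. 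In sum, $\sup_{\boldsymbol{\vartheta}\in\mathcal{K}}w_{\boldsymbol{\vartheta}}(x)\le C_{\mathcal{K}}\big(1+|\log x|\,\mathbf{1}_{\{x<b\}}\big)$ on $(0,1)$, with $C_{\mathcal{K}}$ depending only on $\mathcal{K}$ and $b$.

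With this estimate the three conditions are routine. Condition \eqref{Unbounded1} is $w_{\boldsymbol{\vartheta}}(x)<\infty$ uniformly on $\mathcal{K}$, which is the display above. For \eqref{Unbounded2} one inserts $w_{\boldsymbol{\vartheta}}(\eta)\le C_{\mathcal{K}}(1+|\log\eta|)$ (legitimate since $\eta<\xi\le b$), so the inner integrand is at most $C_{\mathcal{K}}\eta^{\theta_1-2}(1+|\log\eta|)$, integrable on $(0,\xi)$ — crucially because $\theta_1-2>-1$ on $\mathcal{K}$, which also absorbs the logarithm — with a bound uniform in $\xi\le b$ and $\boldsymbol{\vartheta}\in\mathcal{K}$; the remaining factor $\int_x^b s_{\boldsymbol{\vartheta}}'(\xi)\,d\xi$ is finite for each fixed $x\in(0,b)$ and uniformly bounded on $\mathcal{K}$. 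For \eqref{Unbounded3}, the estimate on $w_{\boldsymbol{\vartheta}}$ reduces the condition to $\sup_{\boldsymbol{\vartheta}\in\mathcal{K}}\int_0^1(1+|\log x|)\,\nu(\boldsymbol{\vartheta},dx)<\infty$, which is guaranteed by \eqref{ConditionsOnNu2}; in the case $\nu=f_{\boldsymbol{\vartheta}}$ it is checked directly, since $\int_0^1|\log x|\,e^{sx}x^{\theta_1-1}(1-x)^{\theta_2-1}dx<\infty$ uniformly on $\mathcal{K}$ (using $\theta_1>1$) and $G_{\boldsymbol{\vartheta}}$ is bounded away from $0$ on $\mathcal{K}$ by a lower bound analogous to \eqref{BoundOnG}. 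Theorem~\ref{UnifErgForUnbounded} then yields \eqref{UnifErgDef} for $h(x)=(1-x)x^{-1}$, and the mirror computation gives it for $h(x)=(1-x)^{-1}x$.

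The main obstacle is the second paragraph: pinning down the precise boundary behaviour of $w_{\boldsymbol{\vartheta}}$ — logarithmic blow-up at $0$ and boundedness at $1$ for this particular $h$ — and, more delicately, showing that the constants are uniform over the compact parameter set, which is exactly where the strict inequalities $\theta_1,\theta_2>1$ on $\mathcal{K}$ enter. One must also take care to justify that the scale--speed representation of $w_{\boldsymbol{\vartheta}}$ is the correct one when $0$ and $1$ are entrance rather than regular, i.e.\ that mere finiteness of $w_{\boldsymbol{\vartheta}}$ is the right condition to impose at each boundary; the remaining estimates are elementary once this representation is in hand.
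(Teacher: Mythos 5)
Your proposal is correct and follows essentially the same route as the paper: both verify \eqref{Unbounded1}--\eqref{Unbounded3} by writing $\mathbb{E}^{(\boldsymbol{\vartheta})}_x[\int_0^{T_b}h\,dt]$ and its second moment via the scale--speed recursions \eqref{recursion1}--\eqref{recursion2}, bounding the resulting integrals uniformly over $\mathcal{K}$ using $\theta_1,\theta_2>1$ (your $C_{\mathcal{K}}(1+|\log x|)$ estimate is just a repackaging of the paper's explicit bound $\frac{2\max\{e^{-s},1\}}{\theta_1-1}\int_x^b\xi^{-1}(1-\xi)^{-\theta_2}d\xi$), and then invoke Theorem \ref{UnifErgForUnbounded}. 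Your explicit remark that openness of $\boldsymbol{\Theta}$ inside $\mathbb{R}\times[1,\infty)^2$ forces $\theta_i>1$ is a welcome justification of a fact the paper uses only implicitly.
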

\begin{proof}
	The result follows immediately if we show that all the conditions of Theorem \ref{UnifErgForUnbounded} are satisfied for the above functions. The conditions of Theorem \ref{UnifErgThm} have already been shown to hold in Corollary \ref{UnifErgThm4WF}, whilst integrability with respect to the invariant density is guaranteed as we are considering mutation rates $(\theta_1,\theta_2)\in(1,\infty)^{2}$. That $\sup_{y\in[a,b]}h(y) <\infty$ for any pair $0<a<b<1$ is immediate, so it remains to show \eqref{Unbounded1}, \eqref{Unbounded2}, and \eqref{Unbounded3}. Observe that
	\begin{align*}
		\mathbb{E}^{(\boldsymbol{\vartheta})}_{x}\left[\int_{0}^{T_b}\frac{1-X_t}{X_t}dt\right] &= 2\int_{x}^{b}e^{-s\xi}\xi^{-\theta_1}(1-\xi)^{-\theta_2}\int_{0}^{\xi}e^{s\eta}\eta^{\theta_1 -2}(1-\eta)^{\theta_2}d\eta d\xi \nonumber\\
		&\leq 2\max\{ e^{-s},1 \}\int_{x}^{b}\xi^{-\theta_1}(1-\xi)^{-\theta_2}\int_{0}^{\xi}\eta^{\theta_1 -2}d\eta d\xi \nonumber\\
		&= 2\max\{ e^{-s},1 \}\frac{1}{\theta_1 -1}\int_{x}^{b}\xi^{-1}(1-\xi)^{-\theta_2} d\xi,
	\end{align*}
	so \eqref{Unbounded1} holds as the RHS is continuous in $\boldsymbol{\vartheta}$ and thus can be bounded from above in $\boldsymbol{\vartheta}$ over any compact set $\mathcal{K}\subset\boldsymbol{\Theta}$. For $x>b$
	\begin{align*}
		\mathbb{E}^{(\boldsymbol{\vartheta})}_{x}\left[\int_{0}^{T_b}\frac{1-X_t}{X_t}dt\right] &= 2\int_{b}^{x}e^{-s\xi}\xi^{-\theta_1}(1-\xi)^{-\theta_2}\int_{\xi}^{1}e^{s\eta}\eta^{\theta_1 -2}(1-\eta)^{\theta_2}d\eta d\xi \nonumber\\
		&\leq 2\max\{ e^{s},1 \}\int_{b}^{x}\xi^{-\max\{\theta_1,2\}}(1-\xi)^{-\theta_2}\int_{\xi}^{1}(1-\eta)^{\theta_2}d\eta d\xi\nonumber\\
		&= 2\max\{ e^{s},1 \}\frac{1}{\theta_2 +1}\int_{b}^{x}\xi^{-\max\{\theta_1,2\}}(1-\xi) d\xi\nonumber\\
		&\leq 2\max\{e^{s},1\}\frac{1}{\theta_2 + 1}\int_{b}^{x}\xi^{-\max\{\theta_1,2\}}d\xi,
	\end{align*}
	and thus \eqref{Unbounded2} holds in view of condition (\ref{ConditionsOnNu2}). In the case when $\nu=f_{\boldsymbol{\vartheta}}$, we get that
	\begin{align*}
		\mathbb{E}^{(\boldsymbol{\vartheta})}_{\nu}\left[\int_{0}^{T_b}\frac{1-X_t}{X_t}dt\right] \leq {}& 2\max\{ e^{-s},1 \}\frac{1}{\theta_1 -1}\int_{0}^{b}\int_{x}^{b}\xi^{-1}(1-\xi)^{-\theta_2} d\xi f_{\boldsymbol{\vartheta}}(x)dx\nonumber\\
		&{}+ 2\max\{ e^{s},1 \}\frac{1}{\theta_2 +1}\int_{b}^{1}\int_{b}^{x}\xi^{-\max\{\theta_1,2\}} d\xi f_{\boldsymbol{\vartheta}}(x)dx \nonumber \\
		\leq {}& 2\max\{ e^{s},1 \}\frac{1}{\theta_1 (\theta_1 -1)}\frac{1}{G_{\boldsymbol{\vartheta}}}\int_{0}^{b}(1-\xi)^{-\theta_2}d\xi\nonumber\\
		&{}+2\max\{ e^{s},1 \}\frac{1}{(\theta_2 +1)}\int_{b}^{1}\xi^{-\max\{\theta_1,2\}}d\xi,
	\end{align*}
	which follows from
	\begin{align*}
		\int_{0}^{b}\int_{x}^{b}\xi^{-1}(1-\xi)^{-\theta_2}x^{\theta_1 -1}(1-x)^{\theta_2 -1}d\xi dx &= \int_{0}^{b}\int_{0}^{\xi}\xi^{-1}(1-\xi)^{-\theta_2}x^{\theta_1 -1}(1-x)^{\theta_2 -1} dx d\xi \\
		&\leq \frac{1}{\theta_1}\int_{0}^{b}\xi^{\theta_1 -1}(1-\xi)^{-\theta_2}d\xi \\
		&\leq \frac{1}{\theta_1}\int_{0}^{b}(1-\xi)^{-\theta_2}d\xi
	\end{align*}
	because $\theta_1,\theta_2 > 1$, and
	\[
	\int_{b}^{1}\int_{b}^{x}\xi^{-\max\{\theta_1,2\}}f_{\boldsymbol{\vartheta}}(x)d\xi dx = \int_{b}^{1}\int_{\xi}^{1}\xi^{-\max\{\theta_1,2\}}f_{\boldsymbol{\vartheta}}(x) dx d\xi 
	\leq \int_{b}^{1}\xi^{-\max\{\theta_1,2\}}d\xi.
	\]
	Finally, using the recursions in (\ref{recursion1}) and (\ref{recursion2}), we get that for $x<b$,
	\begin{align*}
		\mathbb{E}^{(\boldsymbol{\vartheta})}_{x}\left[\left(\int_{0}^{T_b}\frac{1-X_t}{X_t}dt\right)^{2}\right] &\leq \frac{2(2\max\{ e^{-s},1 \})^{2}}{\theta_1 -1}\int_{0}^{b}\gamma^{\theta_1 -2}(1-\gamma)^{-\theta_2} d\gamma \nonumber \\
		&\quad\times\int_{x}^{b}\xi^{-\theta_1}(1-\xi)^{-\theta_2}d\xi \nonumber \\
		&\leq \frac{2(2\max\{ e^{-s},1 \})^{2}}{\theta_1 -1}(1-b)^{-\theta_2}\int_{0}^{b}\gamma^{\theta_1 -2} d\gamma \nonumber \\
		&\quad\times\int_{x}^{b}\xi^{-\theta_1}(1-\xi)^{-\theta_2}d\xi
	\end{align*}
	which follows from
	\begin{align*}
		\int_{0}^{\xi}\eta^{\theta_1 -2}(1-\eta)^{\theta_2}\int_{\eta}^{b}\gamma^{-1}(1-\gamma)^{-\theta_2}d\gamma d\eta &\leq \int_{0}^{b}\eta^{\theta_1 -2}(1-\eta)^{\theta_2}\int_{\eta}^{b}\gamma^{-1}(1-\gamma)^{-\theta_2}d\gamma d\eta \\
		&\leq \int_{0}^{b}\gamma^{\theta_1 -2}(1-\gamma)^{-\theta_2} d\gamma,
	\end{align*}
	and again the corresponding RHS can be bounded from above over any compact set $\mathcal{K}\subset\boldsymbol{\Theta}$ using continuity in $\boldsymbol{\vartheta}$, such that \eqref{Unbounded3} holds and so the result follows by Theorem \ref{UnifErgForUnbounded}.
\end{proof}
\noindent We end this section by introducing the concept of \emph{local asymptotic normality (LAN)} and show that the Wright--Fisher diffusion is uniformly LAN, which will be essential in the next section.
\begin{defn}[Special case of Definition 2.1 in \cite{Kuto}] The family of measures $\{\mathbb{P}^{(\boldsymbol{\vartheta})}_{\nu},\boldsymbol{\vartheta}\in\boldsymbol{\Theta}\}$ is said to be \emph{locally asymptotically normal (LAN) at a point} $\boldsymbol{\vartheta}_{0}\in\boldsymbol{\Theta}$ \emph{at rate $T^{-1/2}$} if for any $\mathbf{u}\in\mathbb{R}^{3}$, the likelihood ratio function admits the representation
	\begin{align*}
		\hspace*{-3mm}Z_{T,\boldsymbol{\vartheta}_{0}}(\mathbf{u}) &:= \frac{d\mathbb{P}^{(\boldsymbol{\vartheta}_0 + \frac{\mathbf{u}}{\sqrt{T}})}_{\nu}}{d\mathbb{P}^{(\boldsymbol{\vartheta}_0)}_{\nu}}(X^T) \\
		&\phantom{:}= \exp\left\{\left<\mathbf{u},\Delta_{T}(\boldsymbol{\vartheta}_{0},X^{T})\right> - \frac{1}{2}\left<\mathbf{I}(\boldsymbol{\vartheta}_{0})\mathbf{u},\mathbf{u}\right> + r_{T}(\boldsymbol{\vartheta}_{0},\mathbf{u},X^{T})\right\},
	\end{align*}
	where $\langle\cdot,\cdot\rangle$ denotes the Euclidean inner product on $\mathbb{R}^{3}$, and $\Delta_{T}(\boldsymbol{\vartheta}_{0},X^{T})$ is a random variable such that
	\begin{align}\label{LANNormality}
		\Delta_{T}(\boldsymbol{\vartheta}_{0},X^{T}) \stackrel{\mathclap{\normalfont\mbox{\scriptsize{d}}}}{\rightarrow} N(\mathbf{0},\mathbf{I}(\boldsymbol{\vartheta}_{0})),
	\end{align}
	with $\mathbf{I}(\boldsymbol{\vartheta}_{0})$ the Fisher information matrix evaluated at $\boldsymbol{\vartheta}_{0}$, i.e.
	\begin{align*}
		\mathbf{I}(\boldsymbol{\vartheta}_{0}) &:= \mathbb{E}^{(\boldsymbol{\vartheta}_{0})}\left[\frac{\dot{\boldsymbol{\mu}}(\boldsymbol{\vartheta}_{0},\xi)\dot{\boldsymbol{\mu}}(\boldsymbol{\vartheta}_{0},\xi)^{T}}{\sigma^{2}(\xi)}\right],
	\end{align*}
	where $\dot{\boldsymbol{\mu}}(\boldsymbol{\vartheta},\xi)^{T}$ is the transpose of the vector of derivatives of $\mu(\boldsymbol{\vartheta},x)$ with respect to $\boldsymbol{\vartheta}$. Moreover, the function $r_{T}(\boldsymbol{\vartheta}_{0},\mathbf{u},X^{T})$ satisfies
	\begin{align}\label{LANRemainder}
		\lim\limits_{T\rightarrow\infty}r_{T}(\boldsymbol{\vartheta}_{0},\mathbf{u},X^{T}) = 0 \hspace*{2mm}\textnormal{ in $\mathbb{P}^{(\boldsymbol{\vartheta}_{0})}_{\nu}$-probability}
	\end{align}
	The family of measures is said to be \emph{LAN on} $\boldsymbol{\Theta}$ if it is LAN at every point $\boldsymbol{\vartheta}_{0}\in\boldsymbol{\Theta}$, and further it is said to be \emph{uniformly LAN} on $\boldsymbol{\Theta}$ if both convergence (\ref{LANNormality}) and (\ref{LANRemainder}) are uniform in $\boldsymbol{\vartheta}\in\mathcal{K}$ for every compact $\mathcal{K}\subset\boldsymbol{\Theta}$. 
\end{defn}
\begin{thm}\label{WFDifLAN} 
	The family of measures $\{\mathbb{P}^{(\boldsymbol{\vartheta})}_{\nu},\boldsymbol{\vartheta}\in\boldsymbol{\Theta}\}$ induced by the weak solution to (\ref{WFDiff}) with initial distribution satisfying
	\begin{align}\label{ConditionsOnNu1}
		\lim_{|\boldsymbol{\varepsilon}|\rightarrow0}\frac{\nu(\boldsymbol{\vartheta}+\boldsymbol{\varepsilon},x)}{\nu(\boldsymbol{\vartheta},x)} = 1, \hspace*{3mm} \forall x\in[0,1],
	\end{align}
	\begin{align}\label{ConditionsOnNu2}
		\sup_{\boldsymbol{\vartheta}\in\mathcal{K}}\Bigg\{ \int_{0}^{b}&\frac{\max\{ e^{-s},1 \}}{\theta_1-1}\int_{x}^{b}\xi^{-1}(1-\xi)^{-\theta_2}d\xi\nu(\boldsymbol{\vartheta},dx) \nonumber \\
		&+ \int_{b}^{1}\frac{\max\{ e^{s},1 \}}{\theta_2+1}\int_{b}^{x}\xi^{-\max\{\theta_1,2\}}d\xi\nu(\boldsymbol{\vartheta},dx) \Bigg\} \leq C_{\mathcal{K}}
	\end{align}
	on any compact set $\mathcal{K}\subset\Theta$ with $C_{\mathcal{K}}>0$ constant, is uniformly LAN on $\boldsymbol{\Theta}$, with the likelihood ratio function $Z_{T,\boldsymbol{\vartheta}}(\mathbf{u})$ admitting the representation
	\begin{align*}
		Z_{T,\boldsymbol{\vartheta}}(\mathbf{u}) = \exp\left\{\left<\mathbf{u},\boldsymbol{\Delta}_{T}(\boldsymbol{\vartheta},X^{T})\right> - \frac{1}{2}\left<\mathbf{I}(\boldsymbol{\vartheta})\mathbf{u},\mathbf{u}\right> + r_{T}(\boldsymbol{\vartheta},\mathbf{u},X^{T}) \right\}
	\end{align*}
	for $\mathbf{u}\in \mathcal{U}_{T,\boldsymbol{\vartheta}}=\{\mathbf{u} : \boldsymbol{\vartheta}+\frac{\mathbf{u}}{\sqrt{T}}\in\boldsymbol{\Theta}\}$, where 
	\begin{align*}
		\boldsymbol{\Delta}_{T}(\boldsymbol{\vartheta},X^{T}) = \frac{1}{\sqrt{T}}\int_{0}^{T}\frac{\dot{\boldsymbol{\mu}}_{\textnormal{WF}}(\boldsymbol{\vartheta},X_t)}{\sigma_{\textnormal{WF}}(X_t)}dW_t. 
	\end{align*}
	In particular the result holds for $\nu = f_{\boldsymbol{\vartheta}}$.
\end{thm}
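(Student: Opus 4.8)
\emph{Proof proposal.} The plan is to read the claimed LAN expansion directly off the explicit likelihood ratio \eqref{WFLikelihoodVartheta}, available precisely because we have restricted to $\boldsymbol{\Theta}\subset\mathbb{R}\times[1,\infty)^{2}$ where the measures are equivalent. The key algebraic observation is that $\mu_{\textnormal{WF}}(\boldsymbol{\vartheta},x)$ is \emph{affine} in $\boldsymbol{\vartheta}=(s,\theta_{1},\theta_{2})$, so that $\dot{\boldsymbol{\mu}}_{\textnormal{WF}}(x)=\tfrac12(x(1-x),\,1-x,\,-x)^{T}$ does not depend on $\boldsymbol{\vartheta}$ and, \emph{exactly},
$$\mu_{\textnormal{WF}}(\boldsymbol{\vartheta}+\mathbf{u}/\sqrt{T},x)-\mu_{\textnormal{WF}}(\boldsymbol{\vartheta},x)=\tfrac{1}{\sqrt{T}}\langle\mathbf{u},\dot{\boldsymbol{\mu}}_{\textnormal{WF}}(x)\rangle.$$
Substituting $\boldsymbol{\vartheta}'=\boldsymbol{\vartheta}+\mathbf{u}/\sqrt{T}$ into \eqref{WFLikelihoodVartheta} then yields the exact identity
$$\log Z_{T,\boldsymbol{\vartheta}}(\mathbf{u})=\log\frac{\nu(\boldsymbol{\vartheta}+\mathbf{u}/\sqrt{T},X_{0})}{\nu(\boldsymbol{\vartheta},X_{0})}+\langle\mathbf{u},\boldsymbol{\Delta}_{T}(\boldsymbol{\vartheta},X^{T})\rangle-\tfrac12\langle\widehat{\mathbf{I}}_{T}\mathbf{u},\mathbf{u}\rangle,$$
with $\boldsymbol{\Delta}_{T}$ as in the statement and $\widehat{\mathbf{I}}_{T}:=\tfrac1T\int_{0}^{T}\sigma_{\textnormal{WF}}^{-2}(X_{t})\dot{\boldsymbol{\mu}}_{\textnormal{WF}}(X_{t})\dot{\boldsymbol{\mu}}_{\textnormal{WF}}(X_{t})^{T}\,dt$ a functional of the observed path. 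Reading off $r_{T}(\boldsymbol{\vartheta},\mathbf{u},X^{T})=\log\frac{\nu(\boldsymbol{\vartheta}+\mathbf{u}/\sqrt{T},X_{0})}{\nu(\boldsymbol{\vartheta},X_{0})}-\tfrac12\langle(\widehat{\mathbf{I}}_{T}-\mathbf{I}(\boldsymbol{\vartheta}))\mathbf{u},\mathbf{u}\rangle$, the theorem reduces to two statements, both uniform over compact $\mathcal{K}\subset\boldsymbol{\Theta}$: (i) $\widehat{\mathbf{I}}_{T}\to\mathbf{I}(\boldsymbol{\vartheta})$ in $\mathbb{P}^{(\boldsymbol{\vartheta})}_{\nu}$-probability; and (ii) $\boldsymbol{\Delta}_{T}(\boldsymbol{\vartheta},X^{T})$ converges in distribution to $N(\mathbf{0},\mathbf{I}(\boldsymbol{\vartheta}))$.

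For (i) I would compute the six distinct entries of $\sigma_{\textnormal{WF}}^{-2}\dot{\boldsymbol{\mu}}_{\textnormal{WF}}\dot{\boldsymbol{\mu}}_{\textnormal{WF}}^{T}$: four of them, $\tfrac14 x(1-x)$, $\tfrac14(1-x)$, $-\tfrac14 x$, $-\tfrac14$, are bounded continuous on $[0,1]$, so their time-averages converge by $\boldsymbol{\vartheta}$-uniform ergodicity, Corollary \ref{UnifErgThm4WF}; the two remaining diagonal entries are $\tfrac14(1-x)x^{-1}$ and $\tfrac14(1-x)^{-1}x$, which are exactly the two unbounded functions covered by Corollary \ref{UnifErgForUnboundedWF} --- this is why that corollary, and the hypothesis \eqref{ConditionsOnNu2} on $\nu$, are needed. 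Since $\mathcal{K}$ is compact and $\boldsymbol{\Theta}$ is open with $\boldsymbol{\Theta}\subset\mathbb{R}\times[1,\infty)^{2}$ we have $\inf_{\boldsymbol{\vartheta}\in\mathcal{K}}\theta_{i}>1$, so $\mathbf{I}(\boldsymbol{\vartheta})=\mathbb{E}^{(\boldsymbol{\vartheta})}[\sigma_{\textnormal{WF}}^{-2}(\xi)\dot{\boldsymbol{\mu}}_{\textnormal{WF}}(\xi)\dot{\boldsymbol{\mu}}_{\textnormal{WF}}(\xi)^{T}]$ is finite (and positive definite, the three entries of $\dot{\boldsymbol{\mu}}_{\textnormal{WF}}$ being linearly independent), and a union bound over the finitely many entries upgrades the entrywise limits to (i). This simultaneously controls the second term of $r_{T}$; the first term tends to $0$ in $\mathbb{P}^{(\boldsymbol{\vartheta})}_{\nu}$-probability, uniformly over $\mathcal{K}$ and over $\mathbf{u}$ in compacts, by \eqref{ConditionsOnNu1} (using $|\mathbf{u}/\sqrt{T}|\to0$, continuity of $\nu(\cdot,x)$ on $\mathcal{K}$, and that $X_{0}\in(0,1)$ almost surely, the boundaries being entrance and $\nu$ not charging them), which gives \eqref{LANRemainder}.

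For (ii), under $\mathbb{P}^{(\boldsymbol{\vartheta})}_{\nu}$ each component of $\boldsymbol{\Delta}_{T}$ is a continuous martingale vanishing at $0$ whose predictable (co)variations are $\langle\Delta_{T,j},\Delta_{T,k}\rangle_{T}=\widehat{\mathbf{I}}_{T,jk}$, which by (i) converge in probability to the deterministic $\mathbf{I}(\boldsymbol{\vartheta})_{jk}$. The martingale central limit theorem for continuous local martingales (the Lindeberg condition being automatic by path continuity) then gives $\boldsymbol{\Delta}_{T}\to N(\mathbf{0},\mathbf{I}(\boldsymbol{\vartheta}))$ in distribution for each fixed $\boldsymbol{\vartheta}$. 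To make this uniform over $\mathcal{K}$ I would combine the Cram\'er--Wold device with a Dambis--Dubins--Schwarz time change, representing $\langle\mathbf{u},\boldsymbol{\Delta}_{T}\rangle$ as a Brownian motion evaluated at the random time $\langle\widehat{\mathbf{I}}_{T}\mathbf{u},\mathbf{u}\rangle$, which tends to $\langle\mathbf{I}(\boldsymbol{\vartheta})\mathbf{u},\mathbf{u}\rangle$ uniformly in probability, and then invoking uniform continuity of Brownian paths over the compact interval $\{\langle\mathbf{I}(\boldsymbol{\vartheta})\mathbf{u},\mathbf{u}\rangle:\boldsymbol{\vartheta}\in\mathcal{K}\}$; alternatively one appeals to the uniform-in-parameter martingale CLT used in \cite{Kuto}. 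Combining with the exact decomposition of $\log Z_{T,\boldsymbol{\vartheta}}(\mathbf{u})$ then gives uniform LAN on $\boldsymbol{\Theta}$. Finally, for the stated special case one checks from \eqref{StationaryDensity} that $\nu=f_{\boldsymbol{\vartheta}}$ satisfies \eqref{ConditionsOnNu1} (since $\log(f_{\boldsymbol{\vartheta}+\boldsymbol{\varepsilon}}/f_{\boldsymbol{\vartheta}})(x)$ is continuous in $\boldsymbol{\vartheta}$ for each $x\in(0,1)$ and $X_{0}\in(0,1)$ a.s.) and \eqref{ConditionsOnNu2} (exactly the estimates already carried out inside the proof of Corollary \ref{UnifErgForUnboundedWF}).

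The main obstacle is step (ii): not the convergence of the information matrix --- that is handed to us directly by Corollaries \ref{UnifErgThm4WF} and \ref{UnifErgForUnboundedWF}, which is the whole point of proving those --- but upgrading the pointwise martingale CLT to a genuinely \emph{uniform}-in-$\boldsymbol{\vartheta}$ (and uniform-in-$\mathbf{u}$-over-compacts) statement, and threading the same uniformity through the control of $r_{T}$ provided by \eqref{ConditionsOnNu1}. A secondary, purely bookkeeping point is to keep the constants appearing in the entrywise ergodic limits explicitly continuous in $\boldsymbol{\vartheta}$ so that the suprema over $\mathcal{K}$ are finite, exactly as in the proof of Corollary \ref{UnifErgForUnboundedWF}.
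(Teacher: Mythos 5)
Your proposal is correct and follows essentially the same route as the paper: the exact decomposition of $\log Z_{T,\boldsymbol{\vartheta}}(\mathbf{u})$ exploiting the affine dependence of $\mu_{\textnormal{WF}}$ on $\boldsymbol{\vartheta}$, the splitting of the empirical information matrix into four bounded entries handled by Corollary \ref{UnifErgThm4WF} and the two unbounded entries $(1-x)x^{-1}$, $x(1-x)^{-1}$ handled by Corollary \ref{UnifErgForUnboundedWF}, and control of $r_T$ via \eqref{ConditionsOnNu1}. The only cosmetic difference is that for the uniform asymptotic normality of $\boldsymbol{\Delta}_T$ the paper simply invokes Proposition 1.20 of \cite{Kuto} (given the uniform convergence of the quadratic variation and boundedness of $\langle\mathbf{I}(\boldsymbol{\vartheta})\mathbf{u},\mathbf{u}\rangle$ over $\mathcal{K}$), whereas you additionally sketch the underlying Dambis--Dubins--Schwarz argument before citing the same result.
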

\begin{proof}
	From (\ref{WFLikelihoodVartheta}), we have that the log-likelihood ratio is given by
	\allowdisplaybreaks{\begin{align}\label{rearrange}
			\log{Z_{T,\boldsymbol{\vartheta}}(\mathbf{u})} = {}& \log\frac{\nu(\boldsymbol{\vartheta}+\frac{\mathbf{u}}{\sqrt{T}},X_0)}{\nu(\boldsymbol{\vartheta},X_0)} \nonumber \\
			&{}+ \int_{0}^{T}\frac{1}{2}\left(\frac{u_1}{\sqrt{T}}\sqrt{X_t (1-X_t)} + \frac{u_2}{\sqrt{T}}\sqrt{\frac{1-X_t}{X_t}} - \frac{u_{3}}{\sqrt{T}}\sqrt{\frac{X_t}{1-X_t}}\right)dW_t \nonumber \\
			&{}-\frac{1}{2}\int_{0}^{T}\frac{1}{4}\left(\frac{u_1}{\sqrt{T}}\sqrt{X_t (1-X_t)} + \frac{u_2}{\sqrt{T}}\sqrt{\frac{1-X_t}{X_t}} - \frac{u_{3}}{\sqrt{T}}\sqrt{\frac{X_t}{1-X_t}}\right)^{2}dt \nonumber \\
			={}& \log\frac{\nu(\boldsymbol{\vartheta}+\frac{\mathbf{u}}{\sqrt{T}},X_0)}{\nu(\boldsymbol{\vartheta},X_0)} + \left<\mathbf{u},\boldsymbol{\Delta}_{T}(\boldsymbol{\vartheta},X^T)\right> - \frac{1}{2}\left<\mathbf{I}(\boldsymbol{\vartheta})\mathbf{u},\mathbf{u}\right> \nonumber  \\
			&{}+\frac{1}{2}\left<\mathbf{I}(\boldsymbol{\vartheta})\mathbf{u},\mathbf{u}\right> - \frac{1}{2T}\int_{0}^{T}\frac{\left<\mathbf{u},\dot{\boldsymbol{\mu}}_{\textnormal{WF}}(\boldsymbol{\vartheta},X_t)\right>^{2}}{\sigma^{2}_{\textnormal{WF}}(X_t)}dt,
	\end{align}}
	where
	\begin{align*}
		\mathbf{I}(\boldsymbol{\vartheta}) = \mathbb{E}^{(\boldsymbol{\vartheta})}\left[\frac{1}{4}\begin{pmatrix}
			\xi(1-\xi) & 1-\xi & -\xi \\ 1-\xi & \frac{1-\xi}{\xi} & -1 \\ -\xi & -1 & \frac{\xi}{1-\xi}
		\end{pmatrix}\right].
	\end{align*}
	Setting
	\begin{align*}
		r_T (\boldsymbol{\vartheta},\mathbf{u},X^{T}) := \log\frac{\nu(\boldsymbol{\vartheta}+\frac{\mathbf{u}}{\sqrt{T}},X_0)}{\nu(\boldsymbol{\vartheta},X_0)} + \frac{1}{2}\left<\mathbf{I}(\boldsymbol{\vartheta})\mathbf{u},\mathbf{u}\right> - \frac{1}{2T}\int_{0}^{T}\frac{\left<\mathbf{u},\dot{\boldsymbol{\mu}}_{\textnormal{WF}}(\boldsymbol{\vartheta},X_t)\right>^{2}}{\sigma^{2}_{\textnormal{WF}}(X_t)}dt,
	\end{align*}
	we show that (\ref{LANRemainder}) holds. The first term goes to 0 as $T\to\infty$ by \eqref{ConditionsOnNu1}, and in particular $\nu = f_{\boldsymbol{\vartheta}}$ as given in (\ref{StationaryDensity}) is continuous in $\boldsymbol{\vartheta}$. Thus we deduce that (\ref{LANRemainder}) follows if we can prove that for any $\varepsilon>0$
	\begin{align}\label{UnifErgReq}
		\lim\limits_{T\rightarrow\infty}\sup_{\boldsymbol{\vartheta}\in\mathcal{K}}\mathbb{P}^{(\boldsymbol{\vartheta})}_{\nu}\left[\left|\frac{1}{T}\int_{0}^{T}\frac{\left<\mathbf{u},\dot{\boldsymbol{\mu}}_{\textnormal{WF}}(\boldsymbol{\vartheta},X_t)\right>^{2}}{\sigma^{2}_{\textnormal{WF}}(X_t)}dt -\left<\mathbf{I}(\boldsymbol{\vartheta})\mathbf{u},\mathbf{u}\right> \right|>\varepsilon\right] = 0.
	\end{align}
	Observe that the expression inside the probability in (\ref{UnifErgReq}) is made up of six distinct differences between the averages of the six distinct entries of the Fisher information matrix with respect to time and the stationary density. Thus if we are able to show that each individual difference displays the same convergence as in (\ref{UnifErgDef}), (\ref{UnifErgReq}) follows. Now, as
	\begin{align*}
		\frac{\left<\mathbf{u},\dot{\boldsymbol{\mu}}_{\textnormal{WF}}(\boldsymbol{\vartheta},x)\right>^{2}}{\sigma^{2}_{\textnormal{WF}}(x)} &= \frac{1}{4}\left( u_1\sqrt{x(1-x)} + u_2\sqrt{\frac{1-x}{x}} -u_3\sqrt{\frac{x}{1-x}} \right)^{2} \\
		&= \frac{1}{4}\left(u_{1}^{2}x(1-x) + 2u_1 u_2 (1-x) - 2u_1 u_3 x - 2u_2 u_3 + u_{2}^{2}\frac{1-x}{x} + u_{3}^{2}\frac{x}{1-x}\right)
	\end{align*}
	using (\ref{WFDiff}), we can apply Corollary \ref{UnifErgThm4WF} to the first four terms directly. The remaining two differences involve the unbounded functions $(1-x)x^{-1}$ and $x(1-x)^{-1}$, for which \eqref{UnifErgReq} has been shown to hold in Corollary \ref{UnifErgForUnboundedWF} with $\nu$ satisfying (\ref{ConditionsOnNu2}). Thus (\ref{LANRemainder}) holds (we also show in Corollary \ref{UnifErgForUnboundedWF} that (\ref{ConditionsOnNu2}) holds in the case $\nu=f_{\boldsymbol{\vartheta}}$), and (\ref{LANNormality}) follows from Proposition 1.20 in \cite{Kuto} which we can invoke in view of the above proved (\ref{UnifErgReq}) and the fact that 
	\begin{align*}
		\sup\limits_{\boldsymbol{\vartheta}\in\mathcal{K}} \sqrt{\left<\mathbf{I}(\boldsymbol{\vartheta})\mathbf{u},\mathbf{u}\right>} < \infty.
	\end{align*}
\end{proof}
\noindent We point out here that if the mutation parameters are known, condition \eqref{ConditionsOnNu2} becomes redundant and Theorem \ref{WFDifLAN} holds for any initial distribution satisfying $\lim_{\varepsilon\rightarrow0}\nu(s+\varepsilon,x)/\nu(s,x) = 1$ for any $x\in[0,1]$.

\section{Properties of the ML \& Bayesian Estimators for the Wright--Fisher diffusion}\label{ResultsSection}
We henceforth assume that the mutation parameters $\theta_1, \theta_2 >0$ are known, and thus focus on conducting inference solely on the selection parameter $s\in\mathcal{S}\subset\mathbb{R}$ with $\mathcal{S}$ open and bounded. 
\begin{remark}\label{InferringMutation}
	The continuous observation regime entertained here would enable one to infer the mutation parameters: on $\boldsymbol{\vartheta}\in\mathbb{R}\times(0,1)^{2}$ this is immediate as the family of measures $\{ \mathbb{P}_{\nu}^{(\boldsymbol{\vartheta})} : \boldsymbol{\vartheta}\in\mathbb{R}\times(0,1)^{2} \}$ are mutually singular. In particular, when either mutation parameter is less than 1, the diffusion hits the corresponding boundary in finite time almost surely, and as it does so, the diffusion coefficient (i.e.\ noise) vanishes sufficiently quickly allowing the mutation parameters to be inferred without error. Indeed, by looking at the integrands on the RHS of \eqref{rearrange}, we observe that as the path approaches either boundary, the likelihood ratio explodes. On $\boldsymbol{\vartheta}\in\mathbb{R}\times[1,\infty)^2$ the family of measures $\{ \mathbb{P}_{\nu}^{(\boldsymbol{\vartheta})} : \boldsymbol{\vartheta}\in\mathbb{R}\times[1,\infty)^2 \}$ are now mutually absolutely continuous, with both boundary points unattainable. However, the process can get arbitrarily close to either boundary as $T\to\infty$, and again the noise vanishes sufficiently quickly that the corresponding mutation parameters can be inferred to any required precision. In the case when one mutation parameter is less than 1 and the other is greater than or equal to 1, similar arguments apply. 
\end{remark}
\noindent Actually incorporating inference of the mutation parameter into the inferential setup below leads to some technical difficulties which we discuss in Section \ref{discussion}, so for simplicity we assume them to be known. Nonetheless all the notation introduced above and definitions carry through by replacing $\boldsymbol{\vartheta}$ by $s$. \\ \newline
\noindent We start by defining the MLE $\hat{s}_{T}$ of $s$ in (\ref{WFDiff}) as
\begin{align}\label{MLEDefn}
	\hat{s}_{T} = \arg\sup_{s\in\mathcal{S}}\frac{d\mathbb{P}^{(s)}_{\nu}}{d\mathbb{P}^{(s_{0})}_{\nu}}(X^T)
\end{align}
where $s_{0}\in\mathcal{S}$ is arbitrary and its only role is to specify a reference measure whose exact value does not matter. Observe that now (\ref{WFLikelihoodVartheta}) simplifies to 
\begin{align}\label{WFLikelihoodS}
	\frac{d\mathbb{P}^{(s')}_{\nu}}{d\mathbb{P}^{(s)}_{\nu}}(X^T) =  \frac{\nu(s',X_0)}{\nu(s,X_0)}\exp\Bigg\{&\int_{0}^{T}\left(s'-s\right)\sqrt{X_t(1-X_t)}dW_{t} \nonumber\\
	&\quad- \frac{1}{2}\int_{0}^{T}\left(s'-s\right)^{2}X_t(1-X_t)dt\Bigg\},
\end{align}
\noindent with initial distributions $\{\nu(s,\cdot)\}_{s\in\mathcal{S}}$ admitting a density (which we denote $\nu(s,\cdot)$) with respect to some common dominating measure $\lambda(\cdot)$. In order to be able to define the Bayesian estimator, we introduce the class $\mathscr{W}_{p}$ of loss functions $\ell : \mathcal{S} \rightarrow \mathbb{R_{+}}$ for which the following stipulations are satisfied:
\begin{enumerate}[{A}1.]
	\item $\ell(\cdot)$ is even, non-negative, and continuous at 0 with $\ell(0) = 0$ but not identically zero.
	\item The sets $\{u\in\mathcal{S}:\:\ell(u) < c \}$ are convex $\forall c>0$ (and thus $\ell(\cdot)$ is non-decreasing).
	\item $\ell(\cdot)$ has a polynomial majorant, i.e.\ there exist strictly positive constants $A$ and $b$ such that for any $u\in\mathcal{S}$,
	\begin{align*}
		|\ell(u)| \leq A(1+|u|^{b})
	\end{align*} 
	\item For any $H > 0$ sufficiently large and for sufficiently small $\gamma$, it holds that
	\begin{align*}
		\inf\limits_{|u|>H}\ell(u) - \sup\limits_{|u|\leq H^{\gamma}} \ell(u) \geq 0.
	\end{align*}
\end{enumerate}
\noindent As remarked above, we assume that $\mathcal{S}$ is an open and bounded subset of $\mathbb{R}$, and we denote by $p(\cdot)$ the prior density on $\mathcal{S}$, which we assume belongs to 
\begin{align*}
	\mathscr{P}_{c} := \left\{ p(\cdot) \in C(\bar{\mathcal{S}},\mathbb{R}_{+}) \hspace*{1mm} : \hspace*{1mm} p(u) \leq A(1+|u|^{b}) \hspace*{2mm} \forall \hspace*{1mm}u\in\bar{\mathcal{S}},\hspace*{5mm} \int_{\bar{\mathcal{S}}}p(u)du=1 \right\},
\end{align*} 
where $A$ and $b$ are some strictly positive constants, and $\bar{\mathcal{S}}$ denotes the closure of $\mathcal{S}$. 
With $p(\cdot) \in\mathscr{P}_{c}$ and $\ell(\cdot)\in\mathscr{W}_{p}$, we define the Bayesian estimator $\tilde{s}_{T}$ of $s$ in (\ref{WFDiff}) as 
\begin{align*}
	\tilde{s}_{T} = \arg\min\limits_{\bar{s}_{T}} \int_{\mathcal{S}}\mathbb{E}^{(s)}_{\nu}\left[\ell\left(\sqrt{T}\left(\bar{s}_{T}-s\right)\right)\right]p(s)ds,
\end{align*}
where the minimization is over estimators $\bar{s}_{T} = \bar{s}_{T}(X^T)$. We introduce the last class of functions we will need, namely denote by $\mathscr{G}$ the class of functions satisfying the following two conditions:
\begin{enumerate}
	\item For a fixed $T>0$, $g_{T}(\cdot)$ is a monotonically increasing function on $[0,\infty)$, with $g_{T}(y)\rightarrow\infty$ as $y\rightarrow\infty$.
	\item For any $N>0$,
	\begin{equation*}\label{functionsG1}
		\lim\limits_{\substack{T\rightarrow\infty\\ y\rightarrow\infty}}y^{N}e^{-g_{T}(y)} = 0.
	\end{equation*}
\end{enumerate} 
\noindent Observe that the likelihood ratio function is now given by
\begin{align}\label{LikelihoodRatioFnDefn}
	Z_{T,s}(u) :&= \frac{d\mathbb{P}^{(s+\frac{u}{\sqrt{T}})}_{\nu}}{d\mathbb{P}^{(s)}_{\nu}}(X^T) \nonumber \\
	&=\frac{\nu(s+\frac{u}{\sqrt{T}},X_0)}{\nu(s,X_0)}\exp\Bigg\{\left(\frac{u}{2\sqrt{T}}\right)\int_{0}^{T}\sqrt{X_t(1-X_t)}dW_{t} \nonumber\\
	&\qquad{}\qquad{}\qquad{}\qquad{}\qquad{}- \frac{1}{2}\left(\frac{u}{2\sqrt{T}}\right)^2\int_{0}^{T}X_t(1-X_t)dt\Bigg\}
\end{align}
for 
\begin{align}\label{DefnUTs}
	u\in\mathcal{U}_{T,s}:=\left\{ u\in \mathbb{R} : s + \frac{u}{\sqrt{T}} \in\mathcal{S} \right\}.
\end{align} 
\noindent We now present the main result of this article which states that the ML and Bayesian estimators for $s$ have a set of desirable properties. We prove this by showing that the conditions of Theorems I.5.1, I.5.2, I.10.1, and I.10.2 in \cite{IbraKhas} are satisfied for the Wright--Fisher diffusion. A similar formulation of the result below for the general case of a continuously observed diffusion on $\mathbb{R}$ can be found in Theorems 2.8 and 2.13 in \cite{Kuto}, where the author proves that the conditions necessary to invoke Theorems I.5.1, I.5.2, I.10.1, and I.10.2 in \cite{IbraKhas} hold for a certain class of diffusions. However, this class includes only scalar diffusions for which the inverse of the diffusion coefficient has a polynomial majorant. This fails to hold in our case, forcing us to seek alternative ways to prove that the conditions of the above mentioned theorems hold. 
\begin{thm}\label{WFEstimatorResult}
	Let $\bar{s}_{T}$ be either the ML or Bayesian estimator for the selection parameter $s\in\mathcal{S}$ (for open bounded $\mathcal{S}\subset\mathbb{R}$) in the neutral or non-neutral Wright--Fisher diffusion (\ref{WFDiff}) with initial distribution satisfying 
	\begin{align*}
		\lim_{\varepsilon\rightarrow0}\frac{\nu(s+\varepsilon,x)}{\nu(s,x)} = 1, \hspace*{3mm} \forall x \in [0,1],
	\end{align*}
	and such that for any $M\geq2$ and $u\in\mathcal{U}_{T,s}$,
	\begin{align*}
		\mathbb{P}_{\nu}^{(s)}\left[\left|\log\left(\frac{\nu(s+\frac{u}{\sqrt{T}},X_0)}{\nu(s,X_0)}\right)\right| > \frac{1}{48}\mathbb{E}^{(s)}\left[\xi(1-\xi)\right]\left|u\right|^{2}\right] \leq \frac{C_{1}}{|u|^{M}}
	\end{align*}
	and for any $R>0$ and $u,v \in \mathcal{U}_{T,s}$ with $|u| < R, |v| < R$
	\begin{align*}
		\int_{0}^{1}\left| \nu\left(s+\frac{u}{\sqrt{T}},x\right)^{\frac{1}{2}} - \nu\left(s+\frac{v}{\sqrt{T}},x\right)^{\frac{1}{2}} \right|^{2}\lambda(dx) \leq C_{2}\left|u-v\right|^{2} 
	\end{align*}
	for some constants $C_{1}, C_{2} >0$, and $\lambda(\cdot)$ common dominating measure introduced below \eqref{WFLikelihoodS} (in particular these conditions hold for the case $\nu=f_{s}$, the stationary density). Then $\bar{s}_{T}$ is uniformly over compact sets $\mathcal{K}\subset\mathcal{S}$ consistent, i.e.\ for any $\varepsilon>0$
	\begin{align*}
		\lim\limits_{T\rightarrow \infty}\sup\limits_{s\in\mathcal{K}}\mathbb{P}^{(s)}_{\nu}\big [\left|\bar{s}_{T}-s\right|>\varepsilon\big ] = 0;
	\end{align*} 
	it converges in distribution to a normal random variable
	\begin{align*}
		\sqrt{T}\left(\bar{s}_{T}-s\right) \stackrel{\mathclap{\normalfont\mbox{\scriptsize{d}}}}{\rightarrow} N(0,I(s)^{-1}),
	\end{align*}
	uniformly in $s\in\mathcal{K}$, where
	\begin{align*}
		I(s) = \frac{1}{4}\mathbb{E}^{(s)}\left[\xi(1-\xi)\right];
	\end{align*}
	and it displays moment convergence for any $p>0$
	\begin{align*}
		\lim\limits_{T\rightarrow\infty}\mathbb{E}^{(s)}_{\nu}\bigg [\left|\sqrt{T}\left(\bar{s}_{T}-s\right)\right|^{p}\bigg ] = \mathbb{E}\bigg [\left|I(s)^{-\frac{1}{2}}\zeta\right|^{p}\bigg ]
	\end{align*}
	uniformly in $s\in\mathcal{K}$, where $\zeta\sim N(0,1)$, for any compact set $\mathcal{K}\subset\mathcal{S}$. Furthermore, if the loss function $\ell(\cdot)\in\mathscr{W}_{p}$, then $\bar{s}_{T}$ is also asymptotically efficient, i.e. 
	\begin{align*}
		\lim\limits_{\delta\rightarrow 0}\lim\limits_{T\rightarrow\infty}\sup\limits_{s:|s-s_{0}|<\delta}\mathbb{E}^{(s)}_{\nu}\left[\ell\left(\sqrt{T}\left(\bar{s}_{T}-s\right)\right)\right] = \mathbb{E}\left[\ell\left(I(s_{0})^{-\frac{1}{2}}\zeta\right)\right]
	\end{align*} 
	holds for all $s_{0}\in\mathcal{S}$, where $\zeta\sim N(0,1)$.
\end{thm}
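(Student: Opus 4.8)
The plan is to verify, uniformly over each compact set $\mathcal{K}\subset\mathcal{S}$, the hypotheses of Theorems I.5.1, I.5.2, I.10.1 and I.10.2 in \cite{IbraKhas}, which then deliver all four assertions for both the ML and the Bayesian estimator at once. With $Z_{T,s}(u)$ the normalised likelihood ratio \eqref{LikelihoodRatioFnDefn}, these hypotheses reduce to three statements required uniformly in $s\in\mathcal{K}$: (i) the increment bound $\mathbb{E}^{(s)}_{\nu}\big|Z_{T,s}^{1/2}(u_{1})-Z_{T,s}^{1/2}(u_{2})\big|^{2}\leq C(1+R)|u_{1}-u_{2}|^{2}$ for $|u_{1}|,|u_{2}|\leq R$; (ii) the tail bound $\mathbb{E}^{(s)}_{\nu}Z_{T,s}^{1/2}(u)\leq e^{-g_{T}(|u|)}$ for some $g_{T}\in\mathscr{G}$; and (iii) convergence of the finite-dimensional distributions of $u\mapsto Z_{T,s}(u)$ to those of $u\mapsto\exp\{u\Delta_{s}-\tfrac12 I(s)u^{2}\}$ with $\Delta_{s}\sim N(0,I(s))$. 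Granting these, Theorems I.5.1/I.10.1 yield uniform consistency, I.5.2/I.10.2 yield the uniform asymptotic normality and the convergence of moments with limiting variance $I(s)^{-1}$, and for $\ell\in\mathscr{W}_{p}$ the associated H\'ajek--Le Cam bound together with uniform LAN gives asymptotic efficiency.

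Statement (iii) is obtained exactly as in the proof of Theorem \ref{WFDifLAN}, specialised to the scalar parameter $s$ with $(\theta_{1},\theta_{2})$ held fixed: the log-likelihood ratio expands as in \eqref{rearrange}, the hypothesis $\lim_{\varepsilon\to0}\nu(s+\varepsilon,x)/\nu(s,x)=1$ removes the initial-law term, the quadratic-variation remainder vanishes uniformly by $\boldsymbol{\vartheta}$-uniform ergodicity (Corollary \ref{UnifErgThm4WF}), and $\Delta_{T}(s,X^{T})=\tfrac{1}{2\sqrt{T}}\int_{0}^{T}\sqrt{X_{t}(1-X_{t})}\,dW_{t}$ converges to $N(0,I(s))$ uniformly in $s$ by Proposition 1.20 of \cite{Kuto}; unlike in Theorem \ref{WFDifLAN}, here $\theta_{1},\theta_{2}$ need only be strictly positive, because the $s$-only likelihood ratio \eqref{WFLikelihoodS} is well defined for all such $\theta_{1},\theta_{2}$ and $I(s)=\tfrac14\mathbb{E}^{(s)}[\xi(1-\xi)]$ involves only the bounded function $\xi(1-\xi)$, so that only Corollary \ref{UnifErgThm4WF} is needed. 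For (i) I would work from $Z_{T,s}^{1/2}(u)=\big(\nu(s+\tfrac{u}{\sqrt{T}},X_{0})/\nu(s,X_{0})\big)^{1/2}\exp\{\tfrac{u}{4\sqrt{T}}\int_{0}^{T}\sqrt{X_{t}(1-X_{t})}\,dW_{t}-\tfrac{u^{2}}{16T}\int_{0}^{T}X_{t}(1-X_{t})\,dt\}$, bound the difference of the initial-law factors by the assumed inequality $\int_{0}^{1}|\nu(s+\tfrac{u}{\sqrt{T}},x)^{1/2}-\nu(s+\tfrac{v}{\sqrt{T}},x)^{1/2}|^{2}\lambda(dx)\leq C_{2}|u-v|^{2}$, and bound the difference of the path factors using $0\leq X_{t}(1-X_{t})\leq\tfrac14$, an elementary estimate of $|e^{a}-e^{b}|$ and the Burkholder--Davis--Gundy inequality for the stochastic-integral increment; every constant depends only on $\sup_{\mathcal{K}}|s|$, so the bound is uniform.

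The crux is the tail bound (ii) --- precisely the step where the polynomial-majorant hypothesis on $\sigma_{\textnormal{WF}}^{-2}$ used in \cite{Kuto} fails. Extracting the exponential martingale $M_{T}=\exp\{\tfrac{u}{4\sqrt{T}}\int_{0}^{T}\sqrt{X_{t}(1-X_{t})}\,dW_{t}-\tfrac{u^{2}}{32T}\int_{0}^{T}X_{t}(1-X_{t})\,dt\}$ (genuinely a martingale since $X_{t}(1-X_{t})$ is bounded) and changing measure gives
\begin{align*}
	\mathbb{E}^{(s)}_{\nu}Z_{T,s}^{1/2}(u)=\mathbb{E}^{\left(s+\frac{u}{2\sqrt{T}}\right)}_{\nu(s,\cdot)}\!\left[\left(\frac{\nu(s+\tfrac{u}{\sqrt{T}},X_{0})}{\nu(s,X_{0})}\right)^{1/2}\exp\left\{-\frac{u^{2}}{32T}\int_{0}^{T}X_{t}(1-X_{t})\,dt\right\}\right].
\end{align*}
Conditioning on $X_{0}$ and pulling $\sup_{x\in(0,1)}\mathbb{E}^{(s+u/2\sqrt{T})}_{x}[\exp\{-\tfrac{u^{2}}{32T}\int_{0}^{T}X_{t}(1-X_{t})\,dt\}]$ out of the $X_{0}$-integral leaves the Hellinger affinity between $\nu(s+\tfrac{u}{\sqrt{T}},\cdot)$ and $\nu(s,\cdot)$, which is at most $1$, while on the exceptional event where the initial-law ratio is atypically large the assumed bound $\mathbb{P}^{(s)}_{\nu}[|\log(\nu(s+\tfrac{u}{\sqrt{T}},X_{0})/\nu(s,X_{0}))|>\tfrac{1}{48}\mathbb{E}^{(s)}[\xi(1-\xi)]|u|^{2}]\leq C_{1}|u|^{-M}$ together with a Cauchy--Schwarz step keeps the contribution polynomially small; the constant $\tfrac{1}{48}$, in combination with $\mathbb{E}^{(s)}[\xi(1-\xi)]\leq\tfrac14$ (as $\xi\in(0,1)$), is what ensures that any $|u|^{2}$-growth the hypothesis permits for the initial-law ratio is dominated by the decay of the Laplace transform. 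It then remains to prove
\begin{align*}
	\sup_{s\in\mathcal{K}}\ \sup_{x\in(0,1)}\ \mathbb{E}^{\left(s+\frac{u}{2\sqrt{T}}\right)}_{x}\!\left[\exp\left\{-\frac{u^{2}}{32T}\int_{0}^{T}X_{t}(1-X_{t})\,dt\right\}\right]\leq e^{-g_{T}(|u|)}
\end{align*}
for some $g_{T}\in\mathscr{G}$, which I would do by fixing an interior interval $[a,1-a]$, using that the non-attracting boundaries make the hitting time of $[a,1-a]$ have exponential moments uniformly in the starting point and in $s$, and then invoking $\boldsymbol{\vartheta}$-uniform ergodicity (Corollary \ref{UnifErgThm4WF}) to conclude that $\int_{0}^{T}X_{t}(1-X_{t})\,dt$ exceeds a fixed positive multiple of $T$ with probability exponentially close to $1$; since $|u|\leq\sqrt{T}\,\mathrm{diam}(\mathcal{S})$ one may then take $g_{T}(y)=c\min(y^{2},y)$, which lies in $\mathscr{G}$.

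The main obstacle is exactly this last estimate: controlling the Laplace transform of the additive functional $\int_{0}^{T}X_{t}(1-X_{t})\,dt$ uniformly over the (possibly near-boundary) starting points and over the compact parameter set, with an exponential rate compatible with the growth permitted for the initial-law ratio --- this is what must replace the general-purpose argument of \cite{Kuto} that the Wright--Fisher diffusion does not admit, and everything else is bookkeeping around the explicit representation \eqref{LikelihoodRatioFnDefn} and the results of Section \ref{WFDiffSection}. Finally, all the hypotheses imposed on $\nu$ are met by $\nu=f_{s}$ because $f_{s'}(x)/f_{s}(x)$ is proportional to $e^{(s'-s)x}$ and hence, for $s,s'$ in a bounded set, continuous and bounded and bounded away from zero, so both the log-ratio tail bound and the Hellinger--Lipschitz bound hold trivially.
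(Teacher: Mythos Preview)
Your overall plan matches the paper: both proofs verify the three Ibragimov--Has'minskii conditions recorded in Theorem \ref{IbragimovKhasminskiiForErgDiff}. For (iii) the paper proceeds exactly as you describe (Corollary \ref{MarginalConvergence}, the one-parameter specialisation of Theorem \ref{WFDifLAN}), and for (i) it invokes Lemma 1.13/Remark 1.14 of \cite{Kuto} to split $\mathbb{E}^{(s)}_\nu\big|Z^{1/2}_{T,s}(u)-Z^{1/2}_{T,s}(v)\big|^2$ into an initial-law Hellinger piece plus second and fourth moments of the drift increment, all controlled via $0\le X_t(1-X_t)\le \tfrac14$ --- essentially your (i) in slightly different packaging.

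For the tail bound (ii) your Girsanov route is genuinely different from the paper's. The paper does \emph{not} change measure: it first establishes, for every $M\ge 2$,
\[
\mathbb{P}^{(s)}_\nu\Big[Z_{T,s}(u)>\exp\big\{-\tfrac{1}{16}\mathbb{E}^{(s)}[\xi(1-\xi)]\,|u|^2\big\}\Big]\le \frac{C_{s,M}}{|u|^M},
\]
by splitting the event into three pieces $A_1+A_2+A_3$ (initial-law ratio, stochastic integral, and time-average deviation). The first is the hypothesis, the second is Markov plus a BDG-type moment bound, and the third is Theorem 3.2 of \cite{Locherbach}, a \emph{polynomial} deviation inequality $\mathbb{P}\big[|T^{-1}\!\int_0^T X_t(1-X_t)\,dt-\mathbb{E}^{(s)}[\xi(1-\xi)]|>\eta\big]\le K\,T^{-p/2}$, converted to $|u|^{-p}$ via $|u|\le d_s\sqrt T$. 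A single Cauchy--Schwarz step then yields $\mathbb{E}^{(s)}_\nu\big[Z_{T,s}^{1/2}(u)\big]\le e^{-c|u|^2}+C|u|^{-M/2}=:e^{-g_T(|u|)}$, with $g_T\in\mathscr G$ because $M$ may be chosen arbitrarily large.

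Your Girsanov reduction to a Laplace transform of $\int_0^T X_t(1-X_t)\,dt$ under $\mathbb{P}^{(s+u/2\sqrt T)}$ is correct and would reach the same destination, but the justification you give has a gap. Corollary \ref{UnifErgThm4WF} delivers only convergence in probability; the quantitative rate behind it (again \cite{Locherbach}) is polynomial in $T^{-1}$, so it does \emph{not} give that $\int_0^T X_t(1-X_t)\,dt\ge cT$ ``with probability exponentially close to $1$'', and your proposed $g_T(y)=c\min(y^2,y)$ is unsupported. What the available tools actually produce along your route is the same polynomial deviation bound the paper uses, after which the Laplace transform is at most $e^{-c|u|^2}+C|u|^{-M}$ and the resulting $g_T$ is the paper's $g_T(y)=-\log\big(e^{-cy^2}+Cy^{-M/2}\big)$. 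An exponential deviation rate could in principle be extracted from the hitting-time moment bounds $\mathbb{E}_x[T_b^q]\le q!\,\kappa^q$ in Appendix \ref{appendix1} via a Cram\'er-type regeneration argument, but that is additional work you have not carried out, and the paper shows it is unnecessary.
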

\noindent As mentioned above, the proof relies on Theorems I.5.1, I.5.2, I.10.1, and I.10.2 in \cite{IbraKhas}, which for reference we combine together in our notation into Theorem \ref{IbragimovKhasminskiiForErgDiff} below. Establishing that the conditions of Theorem \ref{IbragimovKhasminskiiForErgDiff} hold for the Wright--Fisher diffusion is non-trivial as the standard arguments found in \cite{Kuto} no longer hold, and will thus be the main focus of this section. The conclusions of Theorems I.5.1 and I.5.2 guarantee the uniform over compact sets consistency for the MLE and Bayesian estimator respectively, and also give that for any $\varepsilon>0$ and for sufficiently large $T$ 
\begin{align*}
	\sup\limits_{s\in\mathcal{K}}\mathbb{P}_{\nu}^{(s)}\left[\left|\sqrt{T}\left(\bar{s}_{T}-s\right)\right| > \varepsilon \right] \leq \alpha e^{-\beta g_{T}(\varepsilon)}
\end{align*}
with $\alpha, \beta$ strictly positive constants, and $g_{T} \in \mathscr{G}$. On the other hand, Theorems I.10.1 and I.10.2 provide the necessary conditions to deduce the uniform in $s\in\mathcal{K}$ asymptotic normality and convergence of moments for compact $\mathcal{K}\subset\mathcal{S}$, as well as asymptotic efficiency.
\begin{thm}[Ibragimov--Has'minskii]\label{IbragimovKhasminskiiForErgDiff}
	Let $\bar{s}_{T}$ denote either the ML or Bayesian estimator for the parameter $s\in\mathcal{S}$, for open bounded $\mathcal{S}\subset\mathbb{R}$, in (\ref{WFDiff}), with prior density $p(\cdot)\in\mathscr{P}_{c}$, and loss function $\ell(\cdot)\in\mathscr{W}_{p}$. Suppose further that the following conditions are satisfied by the likelihood ratio function $Z_{T,s}(u)$ as defined in (\ref{LikelihoodRatioFnDefn}):
	\begin{enumerate}
		\item $\forall\mathcal{K}\subset \mathcal{S}$ compact, we can find constants $a$ and $B$, and functions $g_{T}(\cdot)\in\mathscr{G}$ (all of which depend on $\mathcal{K}$) such that the following two conditions hold:
		\begin{itemize}
			\item $\forall R>0$, $\forall u, v \in\mathcal{U}_{T,s}$ as defined in (\ref{DefnUTs}) satisfying $|u|<R$, $|v|<R$, and for some $m\geq q>1$
			\begin{align}\label{IbragimovLikeRatioBoundsForErgDiff}
				\sup\limits_{s\in\mathcal{K}}\mathbb{E}^{(s)}_{\nu}\left[\left|Z_{T,s}(u)^{\frac{1}{m}} - Z_{T,s}(v)^{\frac{1}{m}}\right|^{m}\right] \leq B(1+R^{a})|u-v|^{q}.
			\end{align}
			\item $\forall u\in \mathcal{U}_{T,s}$
			\begin{align*}
				\sup\limits_{s\in\mathcal{K}}\mathbb{E}^{(s)}_{\nu}\left[Z_{T,s}(u)^{\frac{1}{2}}\right] \leq e^{-g_{T}(|u|)}.
			\end{align*}
		\end{itemize}
		\item The random functions $Z_{T,s}(u)$ have marginal distributions which converge uniformly in $s\in\mathcal{K}$ as $T\rightarrow\infty$ to those of the random function $Z_{s}(u)\in C_{0}(\mathbb{R})$, where $C_{0}(\mathbb{R})$ denotes the space of continuous functions on $\mathbb{R}$ vanishing at infinity, equipped with the supremum norm and the Borel $\sigma$-algebra.
		\item The limit function $Z_{s}(u)$ attains its maximum at the unique point $\hat{u}(s)=u$ with probability 1, and the random function
		\begin{align*}
			\psi(v) = \int_{\mathbb{R}}\ell(v-u)\frac{Z_{s}(u)}{\int_{\mathbb{R}}Z_{s}(y)dy}du
		\end{align*}
		attains its minimum value at a unique point $\tilde{u}(s) = u$ with probability 1.
	\end{enumerate}
	Then we have that $\bar{s}_{T}$ is: uniformly in $s\in\mathcal{K}$ consistent, i.e.\ for any $\varepsilon>0$
	\begin{align*}
		\lim\limits_{T\rightarrow \infty}\sup\limits_{s\in\mathcal{K}}\mathbb{P}^{(s)}_{\nu}\big [\left|\bar{s}_{T}-s\right|>\varepsilon\big ] = 0,
	\end{align*} 
	the distributions of the random variables $\bar{u}_{T}=\sqrt{T}\left(\bar{s}_{T}-s\right)$ converge uniformly in $s\in\mathcal{K}$ to the distribution of $\bar{u}$, and for any loss function $\ell\in\mathscr{W}_{p}$ uniformly in $s\in\mathcal{K}$
	\begin{align}\label{UnifTildeU}
		\lim\limits_{T\rightarrow\infty}\mathbb{E}^{(s)}_{\nu}\left[\ell\left(\sqrt{T}\left(\bar{s}_{T}-s\right)\right)\right] = \mathbb{E}^{(s)}_{\nu}\left[\ell(\bar{u})\right].
	\end{align}
\end{thm}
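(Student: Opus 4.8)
The plan is to deduce the statement directly from the four cited theorems of \cite{IbraKhas} by exhibiting a precise correspondence between our conditions 1--3 and the abstract hypotheses that Ibragimov and Has'minskii impose on a normalised likelihood ratio process. Their asymptotic theory is developed for a general family $Z_{T,s}(u)$, and its conclusions rest on two analytic bounds together with a weak-convergence requirement and a uniqueness requirement on the limit; the content of the proof is therefore a dictionary translation, the only genuine work being the verification that uniformity in $s\in\mathcal{K}$ is preserved throughout.

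First I would recall the abstract framework of \cite[Chapter I]{IbraKhas}: it requires a Hölder-type bound $\mathbb{E}^{(s)}_\nu|Z^{1/m}_{T,s}(u)-Z^{1/m}_{T,s}(v)|^m \le B(1+R^a)|u-v|^q$ with $q$ strictly larger than the parameter dimension (here $1$), and an exponential bound $\mathbb{E}^{(s)}_\nu[Z^{1/2}_{T,s}(u)]\le e^{-g_T(|u|)}$ with $g_T\in\mathscr{G}$. These are verbatim our condition 1, first and second bullets. I would then invoke Theorems I.5.1 and I.5.2, which take exactly these two bounds as input and output, for the MLE and the Bayesian estimator respectively, the large-deviation estimate $\sup_{s\in\mathcal{K}}\mathbb{P}^{(s)}_\nu[|\sqrt{T}(\bar{s}_T-s)|>H]\le\alpha e^{-\beta g_T(H)}$; since $g_T(H)\to\infty$ this already yields the uniform over compact sets consistency, and the polynomial majorants built into $\mathscr{W}_p$ and $\mathscr{P}_c$ guarantee that the integrals defining the Bayesian risk converge.

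Next I would upgrade finite-dimensional convergence to process-level weak convergence: condition 1, first bullet, is a Kolmogorov-type continuity estimate delivering tightness of $\{Z_{T,s}(\cdot)\}_T$ in $C_0(\mathbb{R})$, and combined with the uniform convergence of marginals in condition 2 this gives $Z_{T,s}(\cdot)\Rightarrow Z_s(\cdot)$ uniformly in $s\in\mathcal{K}$. Feeding this convergence, together with the uniqueness of the argmax and of the minimiser of $\psi$ stipulated in condition 3, into Theorems I.10.1 and I.10.2, an argmax/continuous-mapping argument (for which condition 3 is exactly what guarantees continuity of the relevant functionals) yields convergence in distribution of $\bar{u}_T=\sqrt{T}(\bar{s}_T-s)$ to $\bar{u}$, while the loss convergence \eqref{UnifTildeU} follows by supplementing weak convergence with uniform integrability. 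The uniform integrability of $\ell(\bar{u}_T)$ is precisely where the two halves of condition 1 combine with the polynomial majorant A3: the exponential tail bound controls the high moments of $\bar{u}_T$, which then dominate $\ell$ through A3.

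The main obstacle will be bookkeeping the uniformity in $s\in\mathcal{K}$ rather than any single estimate. The classical statements in \cite{IbraKhas} are most naturally read pointwise in the parameter, so the crux is to check that when the constants $B,a,\alpha,\beta$ and the function $g_T$ can be chosen independently of $s\in\mathcal{K}$ (as our hypotheses guarantee, every bound being stated as a supremum over $\mathcal{K}$), the proofs of Theorems I.5.1, I.5.2, I.10.1 and I.10.2 deliver conclusions that are themselves uniform over $\mathcal{K}$. Tracking this through the tightness and continuous-mapping steps --- in particular ensuring that the limit law $Z_s$ and the resulting distribution of $\bar{u}$ depend on $s$ only through quantities varying continuously on the compact $\mathcal{K}$ --- is the one point requiring care beyond a verbatim citation.
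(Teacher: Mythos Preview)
Your proposal is correct and matches the paper's treatment: the paper does not actually prove Theorem~\ref{IbragimovKhasminskiiForErgDiff} but simply states it as a restatement, in the present notation, of Theorems I.5.1, I.5.2, I.10.1, and I.10.2 of \cite{IbraKhas} combined. Your sketch of the underlying mechanism (tail bound $\Rightarrow$ consistency and uniform integrability; H\"older bound $\Rightarrow$ tightness; marginal convergence plus uniqueness $\Rightarrow$ argmax/argmin convergence) is an accurate description of how those four theorems fit together, and your emphasis on tracking uniformity over $\mathcal{K}$ is exactly the right point, since that is the only place where one must read the proofs in \cite{IbraKhas} rather than merely their statements.
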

\noindent For the Bayesian estimator, the requirements for inequality (\ref{IbragimovLikeRatioBoundsForErgDiff}) can be weakened as it suffices to show that (\ref{IbragimovLikeRatioBoundsForErgDiff}) holds for $m=2$ and any $q>0$.

\begin{proof}[Proof of Theorem \ref{WFEstimatorResult}]
	Our aim will be to prove that Conditions 1, 2, and 3 in Theorem \ref{IbragimovKhasminskiiForErgDiff} hold for the Wright--Fisher diffusion, for then the ML and Bayesian estimator are uniformly on compact sets consistent. Below, Condition 1 is shown to hold in Propositions \ref{Inequality1} and \ref{Lemma2.11AltResult}; Condition 2 is shown in Corollary \ref{MarginalConvergence}; and Condition 3 is shown in Proposition \ref{thm4.6}.\\ \newline
	\noindent It remains to show how uniform in $s\in\mathcal{K}$ asymptotic normality and convergence of moments, as well as asymptotic efficiency (under the right choice of loss function) follow. Given Conditions 1, 2, and 3 of Theorem 3.2, uniform in $s\in\mathcal{K}$ asymptotic normality follows immediately from Proposition \ref{thm4.6}; $\bar{u} = I(s)^{-1}\Delta(s)$, $\Delta(s) \sim N(0,I(s))$, and $\bar{u}_{T}$ converges uniformly in distribution to $\bar{u}$. Moreover, as stated in Remark I.5.1 in \cite{IbraKhas}, the Ibragimov--Has'minskii conditions also give us a bound on the tails of the likelihood ratio, which can be translated into bounds on the tails of $|\hat{u}_{T}|^{p}$ for any $p>0$ (see the display just below (2.27) in \cite{Kuto}). Similar bounds on the tails of $|\tilde{u}_{T}|^{p}$ hold for the Bayesian estimator by Theorem I.5.7 in \cite{IbraKhas}, and thus we have that the random variables $|\bar{u}_{T}|^{p}$ are uniformly integrable for any $p>0$, uniformly in $s\in\mathcal{K}$ for any compact $\mathcal{K}\subset\mathcal{S}$. Uniform convergence of the moments of the estimators follows from this and the uniform convergence in distribution (by applying a truncation argument). \\ \newline
	For loss functions satisfying $\ell(\cdot)\in\mathscr{W}_{p}$, observe that the uniform convergence in (\ref{UnifTildeU}) allows us to deduce that
	\begin{align*}
		\lim\limits_{T\rightarrow\infty}\sup\limits_{s:|s-s_{0}|<\delta}\mathbb{E}^{(s)}_{\nu}\left[\ell\left(\sqrt{T}\left(\bar{s}_{T}-s\right)\right)\right] = \sup\limits_{s:|s-s_{0}|<\delta}\mathbb{E}\left[\ell\left(I(s)^{-\frac{1}{2}}\zeta\right)\right]
	\end{align*}
	for $\zeta\sim N(0,1)$. As $I(s)$ is continuous in $s$, we have that
	\begin{align*}
		\lim\limits_{\delta\rightarrow 0}\sup\limits_{s:|s-s_{0}|<\delta}\mathbb{E}\left[\ell\left(I(s)^{-\frac{1}{2}}\zeta\right)\right] = \mathbb{E}\left[\ell\left(I(s_{0})^{-\frac{1}{2}}\zeta\right)\right],
	\end{align*}
	giving asymptotic efficiency.
\end{proof}
\noindent We proceed to show that Conditions 1, 2, and 3 in Theorem \ref{IbragimovKhasminskiiForErgDiff} hold for the Wright--Fisher diffusion. Theorem \ref{WFDifLAN} gives us that the Wright--Fisher diffusion is uniformly LAN, which immediately gives the required marginal convergence of the $Z_{T,s}(u)$ in Condition 2.
\begin{corr}\label{MarginalConvergence}
	For any initial distribution satisfying 
	\begin{align*}
		\lim_{\varepsilon\rightarrow0}\frac{\nu(s+\varepsilon,x)}{\nu(s,x)} = 1 \hspace*{3mm} \forall x \in [0,1],
	\end{align*}
	the random functions $Z_{T,s}(u)$ given by 
	\begin{align*}
		Z_{T,s}(u) &= \exp\left\{\frac{u}{2\sqrt{T}}\int_{0}^{T}\sqrt{X_{t}(1-X_{t})}dW_{t} - \frac{u^{2}}{8}\mathbb{E}^{(s)}\left[\xi(1-\xi)\right] + r_{T}(s,u,X^{T}) \right\} \\
		&=: \exp\left\{u\Delta_{T}(s) - \frac{u^{2}}{2}I(s) + r_{T}(s,u,X^{T})\right\},
	\end{align*}
	where
	\begin{align*}
		r_{T}(s,u,X^{T}) := \log\left(\frac{\nu(s+\frac{u}{\sqrt{T}},X_0)}{\nu(s,X_0)}\right)+\frac{u^2}{8}\mathbb{E}^{(s)}\left[\xi(1-\xi)\right] - \frac{1}{2}\left(\frac{u}{2\sqrt{T}}\right)^{2}\int_{0}^{T}X_t(1-X_t)dt,
	\end{align*}
	have marginal distributions which converge uniformly in $s\in\mathcal{K}$ as $T\rightarrow\infty$ to those of the random function $Z_{s}(u)\in C_{0}(\mathbb{R})$ given by
	\begin{align*}
		Z_{s}(u) &:= \exp\left\{u\Delta(s) - \frac{u^{2}}{2}I(s)\right\},
	\end{align*}
	where 
	\begin{align*}
		\Delta(s) := \lim\limits_{T\rightarrow\infty}\frac{1}{2\sqrt{T}}\int_{0}^{T}\sqrt{X_{t}(1-X_{t})}dW_{t} \sim N\left(0,I(s)\right).
	\end{align*}
\end{corr}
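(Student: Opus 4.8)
The plan is to obtain this as a direct corollary of the uniform LAN property established in Theorem~\ref{WFDifLAN}. Since the mutation parameters $\theta_1,\theta_2$ are now treated as known, I would first specialise Theorem~\ref{WFDifLAN} to the one-dimensional subfamily indexed by $s\in\mathcal{S}$ by restricting to directions $\mathbf{u}=(u,0,0)$; by the remark following Theorem~\ref{WFDifLAN} the auxiliary condition \eqref{ConditionsOnNu2} then becomes vacuous and only $\lim_{\varepsilon\to0}\nu(s+\varepsilon,x)/\nu(s,x)=1$ is required. Computing the $s$-coordinate of $\dot{\boldsymbol{\mu}}_{\textnormal{WF}}(\boldsymbol{\vartheta},x)/\sigma_{\textnormal{WF}}(x)$ gives $\tfrac12\sqrt{x(1-x)}$, and the $(1,1)$ entry of $\mathbf{I}(\boldsymbol{\vartheta})$ is $\tfrac14\mathbb{E}^{(s)}[\xi(1-\xi)]=I(s)$, so Theorem~\ref{WFDifLAN} yields precisely the representation $Z_{T,s}(u)=\exp\{u\Delta_T(s)-\tfrac{u^2}{2}I(s)+r_T(s,u,X^T)\}$ with $\Delta_T(s)=\tfrac{1}{2\sqrt T}\int_0^T\sqrt{X_t(1-X_t)}\,dW_t$, together with its two uniform-in-$s$ conclusions: $\Delta_T(s)$ converges in distribution to $N(0,I(s))$ uniformly over $s\in\mathcal{K}$, and $r_T(s,u,X^T)\to0$ in $\mathbb{P}^{(s)}_\nu$-probability uniformly over $s\in\mathcal{K}$, for every compact $\mathcal{K}\subset\mathcal{S}$.

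Next I would pass from these one-point statements to finite-dimensional (``marginal'') convergence. Fix $u_1,\dots,u_k\in\mathbb{R}$ and consider the $\mathbb{R}^{k+1}$-valued random vector $(\Delta_T(s),r_T(s,u_1,X^T),\dots,r_T(s,u_k,X^T))$. Because each coordinate $r_T(s,u_j,X^T)$ converges to the constant $0$ in probability uniformly in $s$, a uniform version of Slutsky's lemma shows that this vector converges in distribution, uniformly over $s\in\mathcal{K}$, to $(\Delta(s),0,\dots,0)$ with $\Delta(s)\sim N(0,I(s))$. The map $(x,\rho_1,\dots,\rho_k)\mapsto\big(\exp\{u_jx-\tfrac{u_j^2}{2}I(s)+\rho_j\}\big)_{j=1}^k$ is continuous, jointly in its argument and in $s$, so the continuous mapping theorem (again in its uniform-in-$s$ form) gives that $(Z_{T,s}(u_1),\dots,Z_{T,s}(u_k))$ converges in distribution, uniformly over $s\in\mathcal{K}$, to $(Z_s(u_1),\dots,Z_s(u_k))$ with $Z_s(u)=\exp\{u\Delta(s)-\tfrac{u^2}{2}I(s)\}$. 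This is exactly the asserted marginal convergence.

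It then remains to record that $Z_s(\cdot)\in C_0(\mathbb{R})$ almost surely and that $\Delta(s)$ is well-defined. The stationary density \eqref{StationaryDensity} is strictly positive on $(0,1)$, hence $I(s)=\tfrac14\mathbb{E}^{(s)}[\xi(1-\xi)]>0$; so for each realisation of the finite random variable $\Delta(s)$ one has $\log Z_s(u)=u\Delta(s)-\tfrac{u^2}{2}I(s)\to-\infty$ as $|u|\to\infty$, and since $u\mapsto Z_s(u)$ is continuous this gives $Z_s\in C_0(\mathbb{R})$ with probability $1$. The limiting law of $\Delta_T(s)$ is identified through the martingale central limit theorem applied to the continuous martingale $\tfrac{1}{2\sqrt T}\int_0^\cdot\sqrt{X_t(1-X_t)}\,dW_t$, whose quadratic variation $\tfrac{1}{4T}\int_0^T X_t(1-X_t)\,dt$ converges to $I(s)$ by the $\boldsymbol{\vartheta}$-uniform ergodicity of Corollary~\ref{UnifErgThm4WF}; the notation ``$\Delta(s):=\lim$'' is to be read as this weak limit.

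The main obstacle is not the pointwise-in-$s$ convergence, which is classical, but upgrading the Slutsky and continuous-mapping steps so that convergence holds uniformly over the compact set $\mathcal{K}$. This is precisely where one must invoke the uniform-in-$\boldsymbol{\vartheta}$ statements of Theorem~\ref{WFDifLAN} (and the $\boldsymbol{\vartheta}$-uniform ergodicity underlying them) rather than mere pointwise LAN, and some care is needed to combine the uniform convergence in distribution of $\Delta_T(s)$ with the uniform-in-probability convergence of the remainder $r_T$ into a single uniform statement for the pair before applying the exponential map.
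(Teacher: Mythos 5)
Your proposal is correct and follows essentially the same route as the paper: the paper's own proof simply declares that the result "follows immediately from the uniform LAN of the family of measures as shown in Theorem~\ref{WFDifLAN}" (citing the corresponding display in \cite{Kuto} for illustration) and notes that $Z_s(\cdot)\in C_0(\mathbb{R})$. You have merely made explicit the intermediate steps the paper leaves implicit — the restriction to the $s$-direction via the remark following Theorem~\ref{WFDifLAN}, the uniform Slutsky/continuous-mapping passage from the LAN representation to finite-dimensional convergence, and the positivity of $I(s)$ — all of which are sound.
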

\begin{proof}
	The result follows immediately from the uniform LAN of the family of measures as shown in Theorem \ref{WFDifLAN}; see for illustration the display just before Lemma 2.10 in \cite{Kuto}. It is clear that $Z_{s}(u)$ vanishes at infinity and thus is an element of $C_{0}(\mathbb{R})$.
\end{proof}
\noindent The next two results allow us to control the Hellinger distance of the likelihood ratio function as required by Condition 1 in Theorem \ref{IbragimovKhasminskiiForErgDiff}.
\begin{prop}\label{Inequality1}
	Let the initial distribution be such that for any $R>0$ and for $u,v \in \mathcal{U}_{T,s}$ as defined in (\ref{DefnUTs}) with $|u|<R, |v|<R$
	\begin{align}\label{ConditionsOnNuProp3.4}
		\int_{0}^{1}\left| \nu\left(s+\frac{u}{\sqrt{T}},x\right)^{\frac{1}{2}} - \nu\left(s+\frac{v}{\sqrt{T}},x\right)^{\frac{1}{2}} \right|^{2} \lambda(dx)\leq c\left|u-v\right|^{2}
	\end{align}
	for some constant $c>0$ with dominating measure $\lambda(\cdot)$ as specified below \eqref{WFLikelihoodS}. Then for any $\mathcal{K}\subset\mathcal{S}$ compact, we can find a constant $C$ such that for any $R>0$, and for any $u,v\in\mathcal{U}_{T,s}$ as defined in (\ref{DefnUTs}) satisfying $|u|<R$, $|v|<R$, the following holds
	\begin{align*}
		\sup\limits_{s\in\mathcal{K}}\mathbb{E}^{(s)}_{\nu}\left[\Big|Z_{T,s}(u)^{\frac{1}{2}}-Z_{T,s}(v)^{\frac{1}{2}}\Big|^{2}\right] \leq C(1+R^{2}){|u-v|^{2}}.
	\end{align*}
	\noindent In particular the result holds for $\nu = f_{s}$.
\end{prop}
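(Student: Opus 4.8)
The plan is to bound the Hellinger-type quantity $\mathbb{E}^{(s)}_{\nu}\big[|Z_{T,s}(u)^{1/2}-Z_{T,s}(v)^{1/2}|^2\big]$ by splitting off the contribution of the initial distribution from the contribution of the exponential (diffusion) part of $Z_{T,s}$. Write $Z_{T,s}(u) = \frac{\nu(s+u/\sqrt{T},X_0)}{\nu(s,X_0)}\,\widetilde{Z}_{T,s}(u)$ where $\widetilde{Z}_{T,s}(u)$ is the stochastic exponential appearing in \eqref{LikelihoodRatioFnDefn}. Using $\sqrt{ab}-\sqrt{cd} = \sqrt{a}(\sqrt{b}-\sqrt{d}) + \sqrt{d}(\sqrt{a}-\sqrt{c})$ together with $(x+y)^2\le 2x^2+2y^2$, I would reduce to controlling two pieces: (i) the term involving $\big|\nu(s+u/\sqrt{T},X_0)^{1/2}-\nu(s+v/\sqrt{T},X_0)^{1/2}\big|^2$ weighted by $\widetilde{Z}_{T,s}(u)$ (or $\widetilde Z_{T,s}(v)$) and $1/\nu(s,X_0)$, and (ii) the term $\frac{\nu(s+v/\sqrt{T},X_0)}{\nu(s,X_0)}\big|\widetilde{Z}_{T,s}(u)^{1/2}-\widetilde{Z}_{T,s}(v)^{1/2}\big|^2$. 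For piece (i), since $\mathbb{E}^{(s)}_\nu[\widetilde Z_{T,s}(u)\,|\,X_0] = 1$ (it is a true martingale — the integrand $\tfrac{u}{2\sqrt T}\sqrt{X_t(1-X_t)}$ is bounded, so Novikov holds trivially), one conditions on $X_0$ and the whole term collapses to $\int_0^1 \big|\nu(s+u/\sqrt T,x)^{1/2}-\nu(s+v/\sqrt T,x)^{1/2}\big|^2\lambda(dx)$, which is $\le c|u-v|^2$ by hypothesis \eqref{ConditionsOnNuProp3.4}.

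For piece (ii), the key computation is the standard one-dimensional identity for the Hellinger integral of two diffusion laws with the same diffusion coefficient: for the stochastic exponentials,
\begin{align*}
	\mathbb{E}^{(s)}_{\nu}\!\left[\big|\widetilde Z_{T,s}(u)^{1/2}-\widetilde Z_{T,s}(v)^{1/2}\big|^{2}\,\Big|\,X_0\right]
	= 2 - 2\,\mathbb{E}^{(s)}_{\nu}\!\left[\widetilde Z_{T,s}(u)^{1/2}\widetilde Z_{T,s}(v)^{1/2}\,\Big|\,X_0\right],
\end{align*}
and a direct calculation (completing the square in the exponent) shows the cross term equals $\mathbb{E}^{(s)}_\nu\big[\exp\{-\tfrac{1}{8}(\tfrac{u-v}{2\sqrt T})^2\int_0^T X_t(1-X_t)\,dt\}\,\big|\,X_0\big]$ — here I would use that $(\sqrt a+\sqrt b)/2$ applied in the exponent of a Girsanov density produces a new density with a $-\tfrac18(\sqrt{\mathrm{drift}_u}-\sqrt{\mathrm{drift}_v})^2$ correction, and the drift difference is exactly $\tfrac{u-v}{2\sqrt T}\sqrt{X_t(1-X_t)}$. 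Using $1-e^{-x}\le x$ for $x\ge 0$ this is bounded by $\tfrac{1}{8}\big(\tfrac{u-v}{2\sqrt T}\big)^2\,\mathbb{E}^{(s)}_\nu\big[\int_0^T X_t(1-X_t)\,dt\big] \le \tfrac{(u-v)^2}{32T}\cdot T = \tfrac{(u-v)^2}{32}$, since $X_t(1-X_t)\le \tfrac14$ pathwise. This already gives the bound with no $R$-dependence needed for that piece; the weighting $\nu(s+v/\sqrt T,X_0)/\nu(s,X_0)$ is handled by noting $\nu(s+v/\sqrt T,\cdot)$ integrates to $1$ against $\lambda$ and that, with $\nu=f_s$ or under the standing assumptions, this ratio is uniformly bounded on $\mathcal{K}$ for $|v|<R$ (here the $1+R^a$ room, with $a=2$, absorbs any polynomial growth of the ratio coming from \eqref{ConditionsOnNu2}-type bounds or from the $e^{sx}$ factor in \eqref{StationaryDensity}, which is uniformly bounded above and below on $[0,1]$ for $s$ in a compact set).

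Finally I would assemble the pieces, take $\sup_{s\in\mathcal K}$ — all constants arising ($\tfrac14$ for $X(1-X)$, the bound on $G_{\boldsymbol\vartheta}$ from \eqref{BoundOnG}, the Lipschitz constant $c$ from \eqref{ConditionsOnNuProp3.4}, $\max\{e^{|s|},1\}$ over $\mathcal K$) are finite and independent of $T$, $u$, $v$ — and collect them into a single $C=C(\mathcal K)$ to obtain $\sup_{s\in\mathcal K}\mathbb{E}^{(s)}_\nu[|Z_{T,s}(u)^{1/2}-Z_{T,s}(v)^{1/2}|^2]\le C(1+R^2)|u-v|^2$. For the special case $\nu=f_s$, one checks \eqref{ConditionsOnNuProp3.4} directly: $f_s(x)^{1/2} = G_s^{-1/2} e^{sx/2} x^{(\theta_1-1)/2}(1-x)^{(\theta_2-1)/2}$, so $\partial_s f_s(x)^{1/2}$ involves $x/2 - \tfrac12 \partial_s\log G_s$ times $f_s(x)^{1/2}$, which is bounded in $x\in[0,1]$ and locally Lipschitz in $s$, giving \eqref{ConditionsOnNuProp3.4} by the fundamental theorem of calculus and Cauchy--Schwarz. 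The main obstacle I anticipate is purely bookkeeping: making sure the interplay between the $X_0$-conditioning and the weight $\nu(s+v/\sqrt T,X_0)/\nu(s,X_0)$ is handled so that the martingale property of $\widetilde Z_{T,s}$ can be invoked cleanly in piece (i), and verifying the cross-term identity in piece (ii) rigorously (this is where one must be careful that the exponent manipulation corresponds to a genuine change of measure, so that the resulting expectation of a positive exponential functional is indeed $\le 1$ before applying $1-e^{-x}\le x$) — but since the diffusion coefficient $\sqrt{x(1-x)}$ and hence all relevant integrands are bounded on $[0,1]$, none of the integrability subtleties that plague the unbounded-state-space case arise here.
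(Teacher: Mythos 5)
Your argument is correct, but it takes a genuinely different route from the paper's. The paper invokes Lemma 1.13 and Remark 1.14 of \cite{Kuto} as a black box, which bounds $\mathbb{E}^{(s)}_{\nu}\big[|Z_{T,s}(u)^{1/2}-Z_{T,s}(v)^{1/2}|^{2}\big]$ by three terms: the Hellinger distance of the initial densities (handled by \eqref{ConditionsOnNuProp3.4}), the time-integrated second moment of the normalised drift difference (giving $|u-v|^{2}/16$ via $x(1-x)\leq 1/4$), and $T$ times the fourth moment (giving $C|u-v|^{4}$, which is the sole source of the paper's $(1+R^{2})$ factor). You instead do the Hellinger computation by hand: you factor out the initial-density ratio, use the martingale property of the Girsanov exponential (valid since the integrand $\tfrac{u}{2\sqrt{T}}\sqrt{X_t(1-X_t)}$ is bounded, so Novikov is trivial) to collapse one piece onto \eqref{ConditionsOnNuProp3.4} and the weight $\nu(s+v/\sqrt{T},X_0)/\nu(s,X_0)$ onto its unit integral, and compute the affinity $2-2\,\mathbb{E}\big[\widetilde{Z}_{T,s}(u)^{1/2}\widetilde{Z}_{T,s}(v)^{1/2}\big]$ exactly by completing the square; your correction coefficient $-\tfrac{1}{8}\big(\tfrac{u-v}{2\sqrt{T}}\big)^{2}=-\tfrac{(u-v)^{2}}{32T}$ is right, and $1-e^{-x}\leq x$ together with $x(1-x)\leq\tfrac14$ finishes that piece. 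This buys a slightly sharper bound with no $R$-dependence at all (which of course still implies the stated $C(1+R^{2})|u-v|^{2}$), at the cost of redoing by hand what the cited lemma packages; both arguments ultimately rest on the boundedness of the Wright--Fisher diffusion coefficient on $[0,1]$, which is exactly what rescues the proof from the failure of the polynomial-majorant hypotheses in \cite{Kuto}. One small imprecision in your verification of \eqref{ConditionsOnNuProp3.4} for $\nu=f_s$: the factor $\tfrac{x}{2}-\tfrac12\partial_s\log G_s$ is bounded, but $f_s(x)^{1/2}$ itself need not be when a mutation parameter lies below $1$; what you actually need, and what holds, is $|\partial_s f_s(x)^{1/2}|\leq C f_s(x)^{1/2}$ with the right-hand side square-integrable uniformly over $s$ in a compact set, after which the fundamental theorem of calculus and Cauchy--Schwarz give the claim --- the paper reaches the same conclusion via explicit Lipschitz estimates on $G_{s}^{-1/2}$ and the exponential factors.
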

\begin{proof}
	In what follows we denote by $C_{i}$, for $i\in\mathbb{N}$, constants which do not depend on $u$, $v$, $s$, or $T$. Observe that for any $s', s^* \in\mathcal{S}$ it holds that
	\begin{align*}
		\mathbb{E}^{(s')}_{\nu}\left[\int_{0}^{T}\left|\frac{\mu_{\textnormal{WF}}(s',X_{t})-\mu_{\textnormal{WF}}(s^*,X_{t})}{\sigma(X_{t})}\right|^{4}dt\right] &= \mathbb{E}^{(s')}_{\nu}\left[\int_{0}^{T}\left|\frac{(s'-s^*)}{2}\sqrt{X_{t}(1-X_{t})}\right|^{4}dt\right] \nonumber \\
		&\leq \left(\frac{s'-s^*}{4}\right)^{4}T < \infty,
	\end{align*}
	and so we can use Lemma 1.13 and Remark 1.14 from \cite{Kuto} (as done in Lemma 2.10 there) to split the expectation in (\ref{IbragimovLikeRatioBoundsForErgDiff}) into three 
	\begin{align}\label{Lem1.13Applied}
		\mathbb{E}^{(s)}_{\nu}\left[\Big|Z_{T,s}^{\frac{1}{2}}(u)-Z_{T,s}^{\frac{1}{2}}(v)\Big|^{2}\right] &\leq C_{1}\int_{0}^{1}\left|\nu(s_{u},x)^{\frac{1}{2}} - \nu(s_{v},x)^{\frac{1}{2}}\right|^{2}\lambda(dx) \nonumber \\
		&\quad {} + C_{2} \int_{0}^{T}\mathbb{E}^{(s_{v})}_{\nu}\left[\left(\frac{\mu_{\textnormal{WF}}(s_{u},X_t)-\mu_{\textnormal{WF}}(s_{v},X_t)}{\sigma(X_t)}\right)^{2}\right]dt \nonumber \\
		&\quad {} + C_{3} T \int_{0}^{T} \mathbb{E}^{(s_{v})}_{\nu}\left[\left(\frac{\mu_{\textnormal{WF}}(s_{u},X_t)-\mu_{\textnormal{WF}}(s_{v},X_t)}{\sigma(X_t)}\right)^{4}\right]dt,
	\end{align}
	where we denote $s_{u} = s + {u}/{\sqrt{T}}$ and $s_{v}=s+{v}/{\sqrt{T}}$. The first term on the RHS of (\ref{Lem1.13Applied}) can be dealt with using (\ref{ConditionsOnNuProp3.4}), whilst for the second term observe that 
	\begin{align*}
		\int_{0}^{T} \mathbb{E}^{(s_{v})}_{\nu}\left[\left(\frac{\mu_{\textnormal{WF}}(s_{u},X_t)-\mu_{\textnormal{WF}}(s_{v},X_t)}{\sigma(X_t)}\right)^{2}\right]dt &= \frac{|u-v|^{2}}{4T}\int_{0}^{T}\mathbb{E}^{(s_{v})}_{\nu}\left[X_t (1-X_t)\right]dt \nonumber\\
		&\leq \frac{1}{16}|u-v|^{2}.
	\end{align*}
	Therefore
	\begin{align*}
		C_{2} \int_{0}^{T}\mathbb{E}^{(s_{v})}_{\nu}\left[\left(\frac{\mu_{\textnormal{WF}}(s_{u},X_t)-\mu_{\textnormal{WF}}(s_{v},X_t)}{\sigma(X_t)}\right)^{2}\right]dt &\leq C_{4}|u-v|^{2}.
	\end{align*}
	A similar calculation can be performed for the third term in (\ref{Lem1.13Applied}) to get
	\begin{align*}
		C_{3} T \int_{0}^{T}\mathbb{E}^{(s_{v})}_{\nu}\left[\left(\frac{\mu_{\textnormal{WF}}(s_{u},X_t)-\mu_{\textnormal{WF}}(s_{v},X_t)}{\sigma(X_t)}\right)^{4}\right]dt \leq C_{5}|u-v|^{4},
	\end{align*}
	and thus the result holds in view of the fact that $|u|, |v|<R$. \\ 
	\noindent It remains to show that \eqref{ConditionsOnNuProp3.4} holds for $\nu=f_{s}$. To this end, observe that
	\begin{multline}\label{Lemma2.10fterm}
		\int_{0}^{1}\left|f_{s_{u}}(x)^{\frac{1}{2}}-f_{s_{v}}(x)^{\frac{1}{2}}\right|^{2}dx  = \int_{0}^{1}x^{\theta_{1}-1}\left(1-x\right)^{\theta_{2}-1}e^{sx}\Big|\frac{1}{\sqrt{G_{s_{u}}}}e^{\frac{ux}{2\sqrt{T}}}-\frac{1}{\sqrt{G_{s_{v}}}}e^{\frac{vx}{2\sqrt{T}}}\Big|^{2}dx.
	\end{multline}
	Now we have that
	\begin{align*}
		C_{6}\min\{e^{s},1\} \leq G_{s_{u}} &:= \int_{0}^{1}x^{\theta_{1}-1}(1-x)^{\theta_{2}-1}e^{\left(s+\frac{u}{\sqrt{T}}\right)x}dx \leq C_{7}\max\{e^{s},1\},
	\end{align*}
	where $C_6=B(\theta_1,\theta_2)e^{-\textnormal{diam}(\mathcal{S})}$, $C_7 = B(\theta_1,\theta_2)e^{\textnormal{diam}(\mathcal{S})}$ are non-zero, positive, and independent of $s$ and $T$, since we constrain $u,v\in\mathcal{U}_{T,s}$. This allows us to deduce that $G \mapsto 1/\sqrt{G}$ is Lipschitz on $[C_{6}\inf_{s\in\mathcal{K}}\min\{e^{s},1\},C_{7}\sup_{s\in\mathcal{K}}\max\{e^{s},1\}]$ with some constant $C_{8}>0$, i.e.
	\begin{align*}
		\Bigg|\frac{1}{\sqrt{G_{s_{u}}}}-\frac{1}{\sqrt{G_{s_{v}}}}\Bigg| &\leq C_{8}\Big|G_{s_{u}} - G_{s_{v}}\Big| \\
		&= C_{8}\int_{0}^{1}x^{\theta_{1}-1}\left(1-x\right)^{\theta_{2}-1}e^{sx}\left|e^{\frac{ux}{2\sqrt{T}}}-e^{\frac{vx}{2\sqrt{T}}}\right|dx \\
		&\leq C_{8}C_{9}\int_{0}^{1}x^{\theta_{1}-1}\left(1-x\right)^{\theta_{2}-1}e^{sx}\left|\frac{ux}{2\sqrt{T}}-\frac{vx}{2\sqrt{T}}\right|dx \\
		&= \frac{C_{8}C_{9}}{2\sqrt{T}}\left|u-v\right|\int_{0}^{1}x^{\theta_{1}}\left(1-x\right)^{\theta_{2}-1}e^{sx}dx \\
		&\leq \frac{C_{10}}{\sqrt{T}}\max\{ e^{s},1 \}\left|u-v\right|,
	\end{align*}
	where in the second inequality we have made use of the fact that $e^{z}$ is Lipschitz in $z$ on $[-\textnormal{diam}(\mathcal{S}),\textnormal{diam}(\mathcal{S})]$ with some constant $C_{9}>0$. Thus we deduce that
	\allowdisplaybreaks{\begin{align}\label{Lemma2.10ftermnearlydone}
			\Big|\frac{1}{\sqrt{G_{s_{u}}}}e^{\frac{ux}{2\sqrt{T}}}-\frac{1}{\sqrt{G_{s_{v}}}}e^{\frac{vx}{2\sqrt{T}}}\Big|^{2} &= \Bigg|\frac{1}{\sqrt{G_{s_{u}}}}\left(e^{\frac{ux}{2\sqrt{T}}}-e^{\frac{vx}{2\sqrt{T}}}\right)\Bigg|^{2} +\Bigg|e^{\frac{vx}{2\sqrt{T}}}\left(\frac{1}{\sqrt{G_{s_{u}}}}-\frac{1}{\sqrt{G_{s_{v}}}}\right)\Bigg|^{2} \nonumber \\
			&\quad {} + 2\frac{e^{\frac{vx}{2\sqrt{T}}}}{\sqrt{G_{s_{u}}}}\Big|e^{\frac{ux}{2\sqrt{T}}}-e^{\frac{vx}{2\sqrt{T}}}\Big|\Big|\frac{1}{\sqrt{G_{s_{u}}}}-\frac{1}{\sqrt{G_{s_{v}}}}\Big|\nonumber \\
			&\leq \frac{C_{9}^{2}x^{2}}{4T}\frac{1}{C_{6}\min\{e^{s},1\}}\big|u-v\big|^{2}\nonumber \\
			&\quad{}+ e^{\textnormal{diam}(\mathcal{S})x}\frac{C_{10}^{2}}{T}\max\{e^{2s},1\}\big|u-v\big|^{2} \nonumber \\
			&\quad {} + \frac{e^{\textnormal{diam}(\mathcal{S})x}C_{9}C_{10}x}{T\sqrt{C_{6}}}\frac{\max\{e^{s},1\}}{\min\{e^{s/2},1\}}\big|u-v\big|^{2}.
	\end{align}}
	Putting (\ref{Lemma2.10ftermnearlydone}) into (\ref{Lemma2.10fterm}) gives us the result
	\begin{align*}
		\int_{0}^{1}x^{\theta_{1}-1}\left(1-x\right)^{\theta_{2}-1}e^{sx}\Big|\frac{1}{\sqrt{G_{s_{u}}}}e^{\frac{ux}{\sqrt{T}}}-\frac{1}{\sqrt{G_{s_{v}}}}e^{\frac{vx}{\sqrt{T}}}\Big|^{2}dx &\leq \frac{C_s}{T}|u-v|^2, 
	\end{align*}
	as
	\begin{align*}
		C_{s} := C_{11}e^{|s|} + C_{12}\max\{e^{3s},1\} + C_{13}\frac{\max\{e^{2s},1\}}{\min\{e^{s/2},1\}},
	\end{align*}
	is continuous in $s$ over any compact set $\mathcal{K}\subset\mathcal{S}$.
	
\end{proof}
\begin{prop}\label{Lemma2.11AltResult}
	Let the initial distribution be such that for any $M\geq 2$ and for $u\in\mathcal{U}_{T,s}$,
	\begin{align}\label{ConditionsOnNuProp3.5}
		\mathbb{P}_{\nu}^{(s)}\left[\left|\log\left(\frac{\nu(s+\frac{u}{\sqrt{T}},X_0)}{\nu(s,X_0)}\right)\right| > \frac{1}{48}\mathbb{E}^{(s)}\left[\xi(1-\xi)\right]\left|u\right|^{2}\right] \leq \frac{C}{|u|^{M}}
	\end{align}
	for some constant $C>0$. Then for $\mathcal{K}\subset\mathcal{S}$ compact, there exists a function $g_{T}(\cdot)\in\mathscr{G}$ such that for any $u\in \mathcal{U}_{T,s}$ as defined in (\ref{DefnUTs}) we have that
	\begin{align}\label{Lemma2.11Bound}
		\sup_{s\in\mathcal{K}}\mathbb{E}^{(s)}_{\nu}\left[Z_{T,s}(u)^{\frac{1}{2}}\right] \leq e^{-g_{T}(|u|)}.
	\end{align}
	\noindent The result holds in particular when $\nu = f_{s}$.
\end{prop}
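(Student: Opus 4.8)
The plan is to exploit the exponential–martingale structure of the likelihood ratio together with a change of measure and the $\boldsymbol{\vartheta}$-uniform ergodicity of Corollary \ref{UnifErgThm4WF}. Write $N_{T}:=\int_{0}^{T}\sqrt{X_{t}(1-X_{t})}\,dW_{t}$ and $Q_{T}:=\langle N\rangle_{T}=\int_{0}^{T}X_{t}(1-X_{t})\,dt$, and set $s_{u}:=s+\tfrac{u}{\sqrt{T}}$, $s_{u/2}:=s+\tfrac{u}{2\sqrt{T}}$; since $X_{t}(1-X_{t})\le\tfrac14$, Novikov's condition holds for every constant multiple of $N$, so all exponential local martingales below are true martingales of mean $1$. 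Completing the square in \eqref{LikelihoodRatioFnDefn} one finds
\begin{align*}
	Z_{T,s}(u)^{1/2}=Z_{T,s}(u/2)\,\frac{\nu(s_{u},X_{0})^{1/2}\nu(s,X_{0})^{1/2}}{\nu(s_{u/2},X_{0})}\exp\left\{-\frac{u^{2}}{32T}Q_{T}\right\},
\end{align*}
where $Z_{T,s}(u/2)$ is the mean-one Radon--Nikodym derivative carrying $\mathbb{P}^{(s)}_{\nu(s,\cdot)}$ into $\mathbb{P}^{(s_{u/2})}_{\nu(s_{u/2},\cdot)}$ (the drift picks up $\tfrac{u}{4\sqrt{T}}x(1-x)$, i.e.\ the selection parameter becomes $s_{u/2}$, and the mutation terms are untouched). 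Because $\mathcal{S}$ is an interval, $s_{u/2}=\tfrac12(s+s_{u})$ ranges over a compact $\mathcal{K}'\subset\mathcal{S}$ depending only on $\mathcal{K}$, so by \eqref{StationaryDensity}--\eqref{BoundOnG} there is $m_{\mathcal{K}'}>0$ with $\mathbb{E}^{(s_{u/2})}[\xi(1-\xi)]\ge m_{\mathcal{K}'}$.

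For the distinguished case $\nu=f_{s}$ the argument is then clean: changing measure through $Z_{T,s}(u/2)$ gives $\mathbb{E}^{(s)}_{f_{s}}[Z_{T,s}(u)^{1/2}]=\mathbb{E}^{(s_{u/2})}_{f_{s_{u/2}}}[\,R(X_{0})\exp\{-\tfrac{u^{2}}{32T}Q_{T}\}\,]$ with $R=f_{s_{u}}^{1/2}f_{s}^{1/2}/f_{s_{u/2}}$, and under $\mathbb{P}^{(s_{u/2})}_{f_{s_{u/2}}}$ the process is the \emph{stationary} Wright--Fisher diffusion with selection $s_{u/2}$, so $\tfrac1T Q_{T}\to\mathbb{E}^{(s_{u/2})}[\xi(1-\xi)]\ge m_{\mathcal{K}'}$. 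Splitting on $\{\tfrac1T Q_{T}\ge\tfrac12 m_{\mathcal{K}'}\}$ (whose complement has probability $\rho_{T}\to0$ uniformly in $s\in\mathcal{K}$, $u\in\mathcal{U}_{T,s}$ by Corollary \ref{UnifErgThm4WF} applied to $h(x)=x(1-x)$) and using Cauchy--Schwarz on $R$ — whose $L^{1}(f_{s_{u/2}})$ norm is the Hellinger affinity $\int_{0}^{1}f_{s_{u}}^{1/2}f_{s}^{1/2}\le1$ and whose squared $L^{2}$ norm is $G_{s_{u/2}}^{2}/(G_{s}G_{s_{u}})\le1$ by the log-convexity of $s'\mapsto\log G_{s'}$ — yields $\mathbb{E}^{(s)}_{f_{s}}[Z_{T,s}(u)^{1/2}]\le e^{-m_{\mathcal{K}'}|u|^{2}/64}+\rho_{T}^{1/2}$. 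For a general initial law one instead shifts only the drift, landing in $\mathbb{P}^{(s_{u/2})}_{\nu(s,\cdot)}$, and replaces the two $L^{p}$ bounds on $R$ by splitting on $A=\{|\log(\nu(s_{u},X_{0})/\nu(s,X_{0}))|\le\tfrac1{48}\mathbb{E}^{(s)}[\xi(1-\xi)]|u|^{2}\}$: \eqref{ConditionsOnNuProp3.5} bounds $A^{c}$, on which Cauchy--Schwarz and $\mathbb{E}[\nu(s_{u},X_{0})/\nu(s,X_{0})]=1$ give an $O(|u|^{-M/2})$ contribution, while the constant $\tfrac1{48}$ is tuned so that on $A$ the prior factor $\exp\{\tfrac1{96}\mathbb{E}^{(s)}[\xi(1-\xi)]|u|^{2}\}$ is absorbed by $\exp\{-\tfrac{u^{2}}{32T}Q_{T}\}$ on the event where $Q_{T}$ is near its ergodic value, leaving a net decaying exponential.

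In all cases one arrives at $\sup_{s\in\mathcal{K}}\mathbb{E}^{(s)}_{\nu}[Z_{T,s}(u)^{1/2}]\le e^{-\kappa|u|^{2}}+\rho_{T}^{1/2}+\sqrt{C}|u|^{-M/2}$ for some $\kappa>0$ and all $u\in\mathcal{U}_{T,s}$. Since $M$ is arbitrary the polynomial term is harmless, and because $[0,1]$ is bounded we only ever need this on the finite range $|u|\le\sqrt{T}\,\textnormal{diam}(\mathcal{S})$; a monotone extension to $[0,\infty)$ of $\min\{\tfrac{\kappa}{2}y^{2},\,c\log(1/\rho_{T})\}$ then belongs to $\mathscr{G}$ and dominates the right-hand side, provided $\log(1/\rho_{T})/\log T\to\infty$. (For $|u|$ in any fixed bounded window one may instead quote the uniform LAN of Corollary \ref{MarginalConvergence} directly, giving $\mathbb{E}^{(s)}_{\nu}[Z_{T,s}(u)^{1/2}]\to e^{-|u|^{2}\mathbb{E}^{(s)}[\xi(1-\xi)]/32}$ uniformly, hence an $O(e^{-c|u|^{2}})$ bound once $T$ is large.)

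The reason one cannot simply invoke \cite{Kuto}, and the main obstacle, is that $1/\sigma_{\textnormal{WF}}^{2}(x)=1/(x(1-x))$ has no polynomial majorant, so the moment estimates on the likelihood ratio used there are unavailable and must be traded for the explicit stationary density \eqref{StationaryDensity}, the log-convexity inequality $G_{s_{u/2}}^{2}\le G_{s}G_{s_{u}}$, and the boundedness of $[0,1]$ (which makes Novikov automatic and caps $|u|$ at order $\sqrt{T}$). The delicate point is uniformity in $(s,u)$: a single $g_{T}\in\mathscr{G}$ must serve every $s\in\mathcal{K}$ and every $u\in\mathcal{U}_{T,s}$, which forces one to quantify the rate at which the ergodic-average probability $\rho_{T}$ vanishes — a super-polynomial rate in $T$, obtainable from a higher-moment refinement of the estimates behind Theorem \ref{UnifErgThm} (or directly from a Lyapunov bound using $\theta_{1},\theta_{2}>0$) — and to reconcile the small-$|u|$ and large-$|u|$ regimes within one function of $\mathscr{G}$.
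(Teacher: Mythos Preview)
Your change-of-measure route is genuinely different from the paper's. The paper never leaves $\mathbb{P}^{(s)}_\nu$: it bounds $\mathbb{P}^{(s)}_\nu\bigl[Z_{T,s}(u) > \exp\{-\tfrac{1}{16}\mathbb{E}^{(s)}[\xi(1-\xi)]|u|^2\}\bigr]$ by a three-term decomposition $A_1 + A_2 + A_3$ of $\log Z_{T,s}(u)$ (initial-density ratio, stochastic integral $\tfrac{u}{2\sqrt{T}}N_T$, ergodic deviation of $T^{-1}Q_T$), controlling each by $C_{s,M}|u|^{-M}$ via Markov's inequality together with, respectively, hypothesis~\eqref{ConditionsOnNuProp3.5}, a BDG-type moment bound, and Theorem~3.2 of \cite{Locherbach}; combined with Cauchy--Schwarz and $\mathbb{E}^{(s)}_\nu[Z_{T,s}(u)]=1$ this yields a $T$-\emph{independent} $g_T \in \mathscr{G}$. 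Your log-convexity observation $G_{s_{u/2}}^2 \leq G_s G_{s_u}$ for $\nu = f_s$ is a nice touch.

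The proposal has a real gap in its handling of $\rho_T$. You assert that building a $g_T \in \mathscr{G}$ from $e^{-\kappa|u|^2} + \rho_T^{1/2} + O(|u|^{-M/2})$ requires $\log(1/\rho_T)/\log T \to \infty$, and gesture at a ``higher-moment refinement'' of Theorem~\ref{UnifErgThm} without actually producing such a rate. This is both stronger than what those estimates directly give and unnecessary. The key observation you are missing --- and which is exactly the paper's mechanism for its $A_3$ term --- is that $u \in \mathcal{U}_{T,s}$ forces $|u| \leq d_s\sqrt{T}$, so the \emph{polynomial} bound $\rho_T \leq C_p T^{-p/2}$ (any integer $p \geq 2$, from Theorem~3.2 of \cite{Locherbach} with constants uniform over $\mathcal{K}'$ and over initial laws by the hitting-time bounds of Appendix~\ref{appendix1}) converts at once to $\rho_T^{1/2} \leq C'_p|u|^{-p/2}$, delivering a $T$-independent $g_T$ of the same form as the paper's. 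A second, smaller gap: for general $\nu$ your claim that $\tfrac{1}{48}$ is ``tuned'' so the prior factor on $A$ is absorbed fails, because after the measure change the ergodic limit of $T^{-1}Q_T$ is $\mathbb{E}^{(s_{u/2})}[\xi(1-\xi)]$, and nothing forces $\mathbb{E}^{(s)}[\xi(1-\xi)]/\mathbb{E}^{(s_{u/2})}[\xi(1-\xi)] < 3$ uniformly over $s\in\mathcal{K}$, $u\in\mathcal{U}_{T,s}$. The split on $A$ is in fact superfluous: on the ergodic event bound $(\nu(s_u,X_0)/\nu(s,X_0))^{1/2}$ in $L^1(\nu(s,\cdot))$ by the Hellinger affinity $\leq 1$, and on its complement in $L^2(\nu(s,\cdot))$ by $1$, which gives $\mathbb{E}^{(s)}_\nu[Z_{T,s}(u)^{1/2}] \leq e^{-m_{\mathcal{K}'}|u|^2/64} + \rho_T^{1/2}$ directly.
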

\begin{proof}
	Assume for now that for any $M\geq 2$ we have that
	\begin{align}\label{probbound}
		\mathbb{P}^{(s)}_{\nu}\left[Z_{T,s}(u) > \exp\left\{-\frac{1}{16}\mathbb{E}^{(s)}\left[\xi(1-\xi)\right]|u|^{2}\right\} \right] \leq \frac{C_{s,M}}{|u|^{M}}
	\end{align}
	for some constant $C_{s,M}>0$ depending on $s$ and $M$. We show that if (\ref{probbound}) holds, then (\ref{Lemma2.11Bound}) follows. Indeed
	\allowdisplaybreaks{\begin{align*}
			\mathbb{E}^{(s)}_{\nu}\left[Z_{T,s}(u)^{\frac{1}{2}}\right] &= \mathbb{E}^{(s)}_{\nu}\left[Z_{T,s}(u)^{\frac{1}{2}}1_{\left\{Z_{T,s}(u) \leq \exp\left\{-\frac{1}{16}\mathbb{E}^{(s)}\left[\xi(1-\xi)\right]|u|^{2}\right\}\right\}}\right] \\
			&\quad {} + \mathbb{E}^{(s)}_{\nu}\left[Z_{T,s}(u)^{\frac{1}{2}}1_{\left\{Z_{T,s}(u) > \exp\left\{-\frac{1}{16}\mathbb{E}^{(s)}\left[\xi(1-\xi)\right]|u|^{2}\right\}\right\}}\right] \\
			&\leq \exp\left\{-\frac{1}{32}\mathbb{E}^{(s)}\left[\xi(1-\xi)\right]|u|^{2}\right\} \\
			&\quad{}+ \mathbb{E}^{(s)}_{\nu}\left[Z_{T,s}(u)\right]^{\frac{1}{2}}\mathbb{P}^{(s)}_{\nu}\left[Z_{T,s}(u) > \exp\left\{-\frac{1}{16}\mathbb{E}^{(s)}\left[\xi(1-\xi)\right]|u|^{2}\right\}\right]^{\frac{1}{2}} \\
			&\leq \exp\left\{-\frac{1}{32}\mathbb{E}^{(s)}\left[\xi(1-\xi)\right]|u|^{2}\right\} + \frac{C_{s,M}}{|u|^{\frac{M}{2}}} 
	\end{align*}}where in the first inequality we have made use of Cauchy-Schwarz, and for the second inequality we have used (\ref{probbound}). Therefore,
	\begin{align*}
		\sup_{s\in\mathcal{K}}\mathbb{E}^{(s)}_{\nu}\left[Z_{T,s}(u)^{\frac{1}{2}}\right] &\leq \sup_{s\in\mathcal{K}}\left\{  \exp\left\{-\frac{1}{32}\mathbb{E}^{(s)}\left[\xi(1-\xi)\right]|u|^{2}\right\} + \frac{C_{s,M}}{|u|^{\frac{M}{2}}} \right\} \\
		&= \exp\left\{-\frac{1}{32}\inf_{s\in\mathcal{K}}\mathbb{E}^{(s)}\left[\xi(1-\xi)\right]|u|^{2}\right\} + \frac{\sup_{s\in\mathcal{K}}C_{s,M}}{|u|^{\frac{M}{2}}}  \\
		&=: \exp\left\{-g_{T}(|u|)\right\}.
	\end{align*}
	It remains to ensure that $g_{T}(\cdot)\in\mathscr{G}$, that $\inf_{s\in\mathcal{K}}\mathbb{E}^{(s)}[\xi(1-\xi)] \geq \kappa > 0$ for some constant $\kappa$, and that for any $M\geq 2$ it holds that $\sup_{s\in\mathcal{K}}C_{s,M}<\infty$. Observe that
	\begin{align*}
		\min\left\{\inf_{s\in\mathcal{K}}e^{s},1\right\}B(\theta_{1},\theta_{2}) \leq G_{s} \leq \max\left\{\sup_{s\in\mathcal{K}}e^{s},1\right\}B(\theta_{1},\theta_{2}).
	\end{align*}
	Thus 
	\begin{align*}
		\inf_{s\in\mathcal{K}}\mathbb{E}^{(s)}\left[\xi(1-\xi)\right] &= \inf_{s\in\mathcal{K}}\left\{\int_{0}^{1}\frac{1}{G_{s}}e^{s\xi}\xi^{\theta_{1}}(1-\xi)^{\theta_{2}}d\xi\right\}\nonumber\\
		&\geq \frac{\inf_{s\in\mathcal{K}}\left\{\int_{0}^{1}e^{s\xi}\xi^{\theta_{1}}(1-\xi)^{\theta_{2}}d\xi\right\}}{\max\left\{\sup_{s\in\mathcal{K}}e^{s},1\right\}B(\theta_{1},\theta_{2})} \nonumber\\
		&\geq \frac{\min\left\{\inf_{s\in\mathcal{K}}e^{s},1\right\}B(\theta_{1}+1,\theta_{2}+1)}{\max\left\{\sup_{s\in\mathcal{K}}e^{s},1\right\}B(\theta_{1},\theta_{2})} =: \kappa
	\end{align*}
	and $\kappa>0$ because $\mathcal{K}$ is bounded, and thus both $\sup_{s\in\mathcal{K}}e^{s}$ and $\inf_{s\in\mathcal{K}}e^{s}$ are finite and non-zero. We show that $\sup_{s\in\mathcal{K}}C_{s,M}$ is finite $\forall M\geq 2$ in what follows. We now check that $g_{T}(|u|)$ as defined above is in the class of functions $\mathscr{G}$. To this end, observe that
	\begin{align*}
		g_{T}(|u|) = -\log\left(\exp\left\{-\frac{1}{32}\inf_{s\in\mathcal{K}}\mathbb{E}^{(s)}\left[\xi(1-\xi)\right]|u|^{2}\right\} + \frac{{\sup_{s\in\mathcal{K}}C_{s,M}}}{|u|^{\frac{M}{2}}}\right).
	\end{align*}
	Indeed, for a fixed $T>0$, $g_{T}(|u|)\rightarrow\infty$ as $|u|\rightarrow\infty$, because $\inf_{s\in\mathcal{K}}\mathbb{E}^{(s)}[\xi(1-\xi)]>0$, and furthermore given any fixed $N$, we can choose $M$ large enough (note the way we phrased (\ref{probbound}) allows us to choose our $M$ arbitrarily large, say $M>2N$) such that
	\begin{align*}
		\lim\limits_{\substack{T\rightarrow\infty\\y\rightarrow\infty}}y^{N}e^{-g_{T}(y)} = \lim\limits_{\substack{T\rightarrow\infty\\y\rightarrow\infty}}y^{N}\left(\exp\left\{-\frac{1}{32}\inf_{s\in\mathcal{K}}\mathbb{E}^{(s)}\left[\xi(1-\xi)\right]|y|^{2}\right\} + \frac{\sup_{s\in\mathcal{K}}C_{s,M}}{|y|^{\frac{M}{2}}}\right) = 0,
	\end{align*}
	where the order in which limits are taken is immaterial since our choice of $g_{T}(|u|)$ is independent of $T$. Thus we have proved that if (\ref{probbound}) holds, then 
	\begin{align*}
		\sup_{s\in\mathcal{K}}\mathbb{E}^{(s)}_{\nu}\left[Z_{T,s}(u)^{\frac{1}{2}}\right] \leq e^{-g_{T}(|u|)}, \hspace*{10mm}g_{T}(\cdot)\in\mathscr{G}.
	\end{align*}
	\noindent To show that (\ref{probbound}) holds, we make use of Markov's inequality as well as Theorem 3.2 in \cite{Locherbach}. Indeed, observe that 
	\begin{align*}
		\mathbb{P}^{(s)}_{\nu}&\left[Z_{T,s}(u) \geq \exp\left\{-\frac{1}{16}\mathbb{E}^{(s)}\left[\xi(1-\xi)\right]|u|^{2}\right\} \right] \\
		&= \mathbb{P}^{(s)}_{\nu}\Bigg[\frac{\nu(s+\frac{u}{\sqrt{T}},X_0)}{\nu\left(s,X_0\right)}\exp\bigg\{\frac{u}{2\sqrt{T}}\int_{0}^{T}\sqrt{X_{t}(1-X_{t})}dW_{t} \\
		&\quad{}- \frac{|u|^{2}}{8}\left(\frac{1}{T}\int_{0}^{T}X_{t}(1-X_{t})dt - \mathbb{E}^{(s)}\left[\xi(1-\xi)\right]\right)\bigg\} > \exp\left\{\frac{1}{16}\mathbb{E}^{(s)}[\xi(1-\xi)]|u|^{2}\right\} \Bigg] \\
		&\leq \mathbb{P}^{(s)}_{\nu}\left[\left|\log\left(\frac{\nu(s+\frac{u}{\sqrt{T}},X_0)}{\nu\left(s,X_0\right)}\right) \right| > \frac{1}{48}\mathbb{E}^{(s)}[\xi(1-\xi)]|u|^{2} \right] \\
		&\quad {} + \mathbb{P}^{(s)}_{\nu}\left[\left|\frac{u}{2\sqrt{T}}\int_{0}^{T}\sqrt{X_{t}(1-X_{t})}dW_{t}\right| > \frac{1}{48}\mathbb{E}^{(s)}[\xi(1-\xi)]|u|^{2} \right] \\
		&\quad {} + \mathbb{P}^{(s)}_{\nu}\left[\frac{|u|^{2}}{8}\left|\frac{1}{T}\int_{0}^{T}{X_{t}(1-X_{t})}dt - \mathbb{E}^{(s)}\left[\xi(1-\xi)\right]\right| > \frac{1}{48}\mathbb{E}^{(s)}[\xi(1-\xi)]|u|^{2} \right] \\
		& =: A_{1} + A_{2} + A_{3}.
	\end{align*}
	\noindent The bound for $A_{1}$ follows immediately from (\ref{ConditionsOnNuProp3.5}). For the particular case when $\nu=f_{s}$, we use Markov's inequality:
	\begin{align*}
		A_{1} &= \mathbb{P}^{(s)}_{\nu}\left[\left|\log\left(\frac{G_{s}}{G_{s+\frac{u}{\sqrt{T}}}}\right) + \frac{u}{\sqrt{T}}X_{0}\right| > \frac{1}{48}\mathbb{E}^{(s)}[\xi(1-\xi)]|u|^{2}\right] \\
		&\leq \left(\frac{48}{\mathbb{E}^{(s)}[\xi(1-\xi)]|u|^{2}}\right)^{M}\mathbb{E}^{(s)}_{\nu}\left[\left|\log\left(\frac{G_{s}}{G_{s+\frac{u}{\sqrt{T}}}}\right) + \frac{u}{\sqrt{T}}X_{0}\right|^{M}\right].
	\end{align*}
	But 
	\begin{align*}
		\log\left(\frac{G_{s}}{G_{s+\frac{u}{\sqrt{T}}}}\right) = \log\left(\frac{\int_{0}^{1}x^{\theta_{1}-1}(1-x)^{\theta_{2}-1}e^{sx}dx}{\int_{0}^{1}x^{\theta_{1}-1}(1-x)^{\theta_{2}-1}e^{(s+\frac{u}{\sqrt{T}})x}dx}\right) \leq {\frac{|u|}{\sqrt{T}}},
	\end{align*}
	so we have
	\begin{align*}
		A_{1} &\leq \left(\frac{48}{\mathbb{E}^{(s)}[\xi(1-\xi)]|u|^{2}}\right)^{M}\mathbb{E}^{(s)}_{\nu}\left[\left|\frac{u}{\sqrt{T}}\right|^{M}\left|1 + X_{0}\right|^{M}\right] \\
		&= \left(\frac{48}{\mathbb{E}^{(s)}[\xi(1-\xi)]\sqrt{T}|u|}\right)^{M}\mathbb{E}^{(s)}\left[\left|1 + \xi\right|^{M}\right] \\
		&\leq \left(\frac{48d_{s}}{\mathbb{E}^{(s)}[\xi(1-\xi)]|u|^{2}}\right)^{M}\mathbb{E}^{(s)}\left[\left|1 + \xi\right|^{M}\right] =: \frac{C^{(1)}_{s,M}}{|u|^{2M}},
	\end{align*}
	where in the second inequality we made use of the fact that $u\in\mathcal{U}_{T,s}$, and thus $|u|\leq d_{s}\sqrt{T}$ where we define $d_{s}:=\sup_{w\in\mathcal{\partial\mathcal{S}}}|s-w|$ (which is strictly positive and bounded as $\mathcal{S}$ is open and bounded). To see that $\sup_{s\in\mathcal{K}}C_{s,M}^{(1)}$ is bounded, observe that 
	\begin{align*}
		\sup_{s\in\mathcal{K}}C_{s,M}^{(1)} &= \sup_{s\in\mathcal{K}}\left\{\left(\frac{48d_{s}}{\mathbb{E}^{(s)}[\xi(1-\xi)]}\right)^{M}\mathbb{E}^{(s)}\left[\left|1+\xi\right|^{M}\right]\right\} \\
		&\leq \left(96\frac{B(\theta_{1},\theta_{2})}{B(\theta_{1}+1,\theta_{2}+1) }\sup_{s\in\mathcal{K}}d_{s}\frac{\max\{e^{s},1\}}{\min\{e^{s},1\}}\right)^{M},
	\end{align*}
	which is clearly finite because $\mathcal{K}$ is bounded. \\ \newline
	For $A_{2}$ we use a similar argument, but now use the fact that we have a stochastic integral:
	\begin{align*}
		A_{2} &\leq \left(\frac{48}{\mathbb{E}^{(s)}[\xi(1-\xi)]|u|^{2}}\right)^{M}\mathbb{E}^{(s)}_{\nu}\left[\left|\frac{u}{2\sqrt{T}}\int_{0}^{T}\sqrt{X_{t}(1-X_{t})}dW_{t}\right|^{M}\right]\\
		&\leq \left(\frac{24}{\mathbb{E}^{(s)}[\xi(1-\xi)]|u|}\right)^{M}\left(\frac{M}{2}(M-1)\right)^{\frac{M}{2}}T^{-1}\mathbb{E}^{(s)}_{\nu}\left[\int_{0}^{T}\left|X_{t}(1-X_{t})\right|^{\frac{M}{2}}dt\right] \\
		&\leq\left(\frac{12}{\mathbb{E}^{(s)}[\xi(1-\xi)]|u|}\right)^{M}\left(\frac{M}{2}(M-1)\right)^{\frac{M}{2}}=: \frac{C^{(2)}_{s,M}}{|u|^{M}},
	\end{align*}
	\noindent where the first line uses Markov's inequality and the second inequality uses Lemma 1.1 (equation (1.3)) in \cite{Kuto}. That $\sup_{s\in\mathcal{K}}C_{s,M}^{(2)}$ is finite follows from arguments similar to those used for the respective term in $A_1$. \\ \newline
	\noindent For $A_{3}$ we make use of Theorem 3.2 in \cite{Locherbach}, which gives us that for $M\geq 2$ 
	\begin{align}\label{LocherbachProp461}
		\mathbb{P}^{(s)}_{\nu}\Bigg[\left|\frac{1}{T}\int_{0}^{T}{X_{t}(1-X_{t})}dt - \mathbb{E}^{(s)}\left[\xi(1-\xi)\right]\right| &\geq {\frac{1}{6}\mathbb{E}^{(s)}\left[\xi(1-\xi)\right]} \Bigg] \nonumber\\
		&\leq  K(s,X,M)\frac{\|x(1-x)\|^{M}_{\infty}}{\left(\frac{\mathbb{E}^{(s)}\left[\xi(1-\xi)\right]}{6}\sqrt{T}\right)^{M}}. 
	\end{align}
	For the RHS of (\ref{LocherbachProp461}), we have that
	\begin{align*}
		K(s,X,M)\frac{\|x(1-x)\|^{M}_{\infty}}{\left(\frac{\mathbb{E}^{(s)}\left[\xi(1-\xi)\right]}{6}\sqrt{T}\right)^{M}} \leq K(s,X,M)\left(\frac{6\|x(1-x)\|_{\infty}d_{s}}{{\mathbb{E}^{(s)}\left[\xi(1-\xi)\right]}|u|}\right)^{M} =: \frac{C^{(3)}_{s,M}}{|u|^{M}},
	\end{align*}
	where $K(s,X,M)$ is a function that depends on $M$ and on the moments of the hitting times of $X$. Finally we deduce that $\sup_{s\in\mathcal{K}}C_{s,M}^{(3)}$ is finite by observing that
	\begin{align*}
		\sup_{s\in\mathcal{K}}C_{s,M}^{(3)} &= \sup_{s\in\mathcal{K}}K(s,X,M)\left(\frac{6\|x(1-x)\|_{\infty}d_{s}}{{\mathbb{E}^{(s)}\left[\xi(1-\xi)\right]}}\right)^{M} \\
		&\leq \sup_{s\in\mathcal{K}}K(s,X,M)\left(\frac{3}{2}\frac{B(\theta_{1},\theta_{2})}{B(\theta_{1}+1,\theta_{2}+1) }\sup_{s\in\mathcal{K}}d_{s}\frac{\max\{e^{s},1\}}{\min\{e^{s},1\}}\right)^{M},
	\end{align*}
	which is finite since $\|x(1-x)\|_{\infty}=1/4$, $\mathcal{K}$ is compact, and $K(s,X,M)$ is bounded by a function which is continuous in $s$ (see Appendix \ref{appendix1} for the corresponding details). 
\end{proof}
\noindent Finally, we present the result which guarantees that Condition 3 in Theorem \ref{IbragimovKhasminskiiForErgDiff} holds, and thus that the Ibragimov--Has'minskii conditions hold for the Wright--Fisher diffusion.
\begin{prop}\label{thm4.6}
	The random functions $Z_{s}(u)$ and $$\psi(v) := \int_{\mathbb{R}}\ell\left(v-u\right)\frac{Z_{s}(u)}{\int_{\mathbb{R}}Z_{s}(y)dy}du$$ attain their maximum and minimum respectively at the unique point $\bar{u} = \bar{u}_{s} = I(s)^{-1}\Delta(s)$ with probability 1.
\end{prop}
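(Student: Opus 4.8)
The plan is to exploit the fact that the limit likelihood ratio $Z_s$ is, up to normalisation, a Gaussian density, which reduces both assertions to elementary computations. First I would record two structural facts: $I(s)=\tfrac14\mathbb{E}^{(s)}[\xi(1-\xi)]$ is a strictly positive \emph{deterministic} constant (the integral $\int_0^1\xi(1-\xi)f_s(\xi)\,d\xi$ is positive since $f_s>0$ on $(0,1)$), and $\Delta(s)\sim N(0,I(s))$ is finite with probability $1$; hence $\bar u=\bar u_s=I(s)^{-1}\Delta(s)$ is an a.s.\ finite random variable. The statement about $Z_s$ is then immediate: $\log Z_s(u)=u\Delta(s)-\tfrac12 u^2 I(s)$ is a strictly concave quadratic in $u$, so it is maximised at the unique stationary point $u=\Delta(s)/I(s)=\bar u$, and this holds for every realisation of $\Delta(s)$.

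For the statement about $\psi$, I would complete the square to get $Z_s(u)=\exp\{\Delta(s)^2/(2I(s))\}\exp\{-\tfrac12 I(s)(u-\bar u)^2\}$, so that $Z_s(u)/\int_{\mathbb{R}}Z_s(y)\,dy$ is exactly the $N(\bar u,I(s)^{-1})$ density. Substituting $u=\bar u+w$ then gives
\[
\psi(v)=\mathbb{E}\big[\ell(v-\bar u-W)\big],\qquad W\sim N\!\left(0,I(s)^{-1}\right),
\]
which is finite for every $v$ by the polynomial majorant A3 and the fact that $W$ has moments of all orders. Using the layer-cake identity $\ell(x)=\int_0^\infty\mathbf{1}\{\ell(x)>t\}\,dt$ together with A1--A2 — which make each sublevel set $\bar A_t:=\{x:\ell(x)\le t\}$ an interval symmetric about $0$, say $\bar A_t=[-c_t,c_t]$ with $0\le c_t\le\infty$ — Tonelli's theorem yields
\[
\psi(v)=\int_0^\infty\!\Big(1-\mathbb{P}\big(W\in (v-\bar u)+\bar A_t\big)\Big)\,dt .
\]

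The key analytic input is the one-dimensional Anderson-type fact that, since the $N(0,I(s)^{-1})$ density is symmetric and strictly unimodal, the map $a\mapsto\mathbb{P}(W\in a+[-c_t,c_t])$ is maximised at $a=0$, strictly so whenever $0<c_t<\infty$. This immediately gives $\psi(v)\ge\psi(\bar u)$ for all $v$, so $\psi$ is minimised at $\bar u$. For uniqueness, if $\psi(v_0)=\psi(\bar u)$ with $v_0\neq\bar u$ then $\mathbb{P}(W\in(v_0-\bar u)+\bar A_t)=\mathbb{P}(W\in\bar A_t)$ for Lebesgue-a.e.\ $t>0$; but because $\ell$ is continuous at $0$ with $\ell(0)=0$ and is not identically zero (A1), there is a whole interval of $t>0$ for which $\bar A_t$ is a non-degenerate bounded interval ($0<c_t<\infty$), and for those $t$ the strict version above forces $v_0-\bar u=0$, a contradiction. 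Hence $\psi$ attains its minimum at the unique point $\bar u$, with probability $1$.

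I expect the only delicate step to be the uniqueness of the minimiser of $\psi$: since A3 permits bounded loss functions, $\psi$ need not be strictly convex, so a plain convexity argument is insufficient and one must instead extract strictness from the equality case of the (one-dimensional) Anderson inequality, as above. Everything else reduces to completing the square and routine Gaussian estimates.
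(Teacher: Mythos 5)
Your proposal is correct and follows essentially the same route as the paper: the paper's proof simply defers to Theorem III.2.1 of Ibragimov--Has'minskii, which rests on Anderson's Lemma (Lemma II.10.1 there) together with Lemma II.10.2, and your argument is precisely a self-contained, one-dimensional Gaussian instantiation of that result --- completing the square so that the normalised $Z_{s}$ is the $N(\bar{u},I(s)^{-1})$ density, applying the layer-cake formula with the symmetric convex sublevel sets from A1--A2, and extracting uniqueness from the strict form of the one-dimensional Anderson inequality via A1. The only value you add over the paper is making the uniqueness step explicit, which the cited lemma otherwise handles; no gap.
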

\begin{proof}
	The first assertion follows immediately from Corollary \ref{MarginalConvergence}, whilst for the second we direct the interested reader to Theorem III.2.1 in \cite{IbraKhas}, which relies on two results: Anderson's Lemma (Lemma II.10.1 in \cite{IbraKhas}), and Lemma II.10.2 in \cite{IbraKhas}.
\end{proof}

\section{Numerical Simulations}\label{Simulations}
\noindent We illustrate the results proved in Section \ref{ResultsSection} by showing consistency, convergence in distribution and convergence of moments for the MLE when applied to data simulated from the Wright--Fisher diffusion. By making use of the `exact algorithm' (see \cite{JenkinsSpano} for full details), we obtain exact draws from the Wright--Fisher diffusion. The generated paths are then used to calculate the MLE, and subsequently kernel smoothed density estimates for the rescaled MLE for various terminal times $T$ are plotted against the density of the limiting distribution. Using the definition in (\ref{MLEDefn}), the MLE for the selection parameter is given by
\begin{align*}
	\hat{s}_{T} = \frac{X_{T}-X_{0} - \int_{0}^{T}\left(-\theta_2 X_{t} + \theta_{1}(1-X_{t})\right)dt}{\int_{0}^{T}X_{t}(1-X_{t})dt},
\end{align*}
which is impossible to calculate exactly in view of the infinite-dimensional path involved in the integral. Instead we approximate the MLE by using Riemann sums instead of Lebesgue integrals, which gives rise to the approximation of $\hat{s}_{T}$ given by
\begin{align}\label{MLEApprox}
	\check{s}_{T} = \frac{X_{T}-X_{0} - \sum_{i=1}^{N}\left(-\theta_2 X_{t_i} + \theta_{1}(1-X_{t_i})\right)\Delta_i}{\sum_{i=1}^{N}X_{t_i}(1-X_{t_i})\Delta_i}
\end{align}
where $\Delta_i := t_{i}-t_{i-1}$ for a time discretisation grid $\{t_i\}_{i=0}^{N}$ where $t_0 = 0$ and $t_{N} = T$, and $N\in\mathbb{N}\setminus\{0\}$. In particular, $\{X_{t_i}\}_{i=0}^{N}$ denotes the values of the Wright--Fisher path at the times $\{t_i\}_{i=0}^{N}$, which corresponds to the output generated by the exact algorithm. \\ \newline
\noindent To simulate the Wright--Fisher paths, we set the selection parameter $s=4$, the mutation parameters $\theta_1, \theta_2 =2$ (to ensure that we are within the regime of Theorem \ref{WFEstimatorResult}), $\Delta_i = 0.001$, $X_0=0.25$ and varied the terminal time $T\in\{1,2,10,50\}$. For each of the 10,000 simulated paths, we computed (\ref{MLEApprox}), and subsequently for each $T$ we obtained kernel smoothed estimates of the density of $\sqrt{T}(\check{s}_{T}-s)$ which are plotted against the limiting $N(0,\frac{1}{4}\mathbb{E}^{(s)}[\xi(1-\xi)]^{-1})$ density in Figure \ref{Figure}.
\begin{figure}[H]
	\centering
	\includegraphics[scale=0.8]{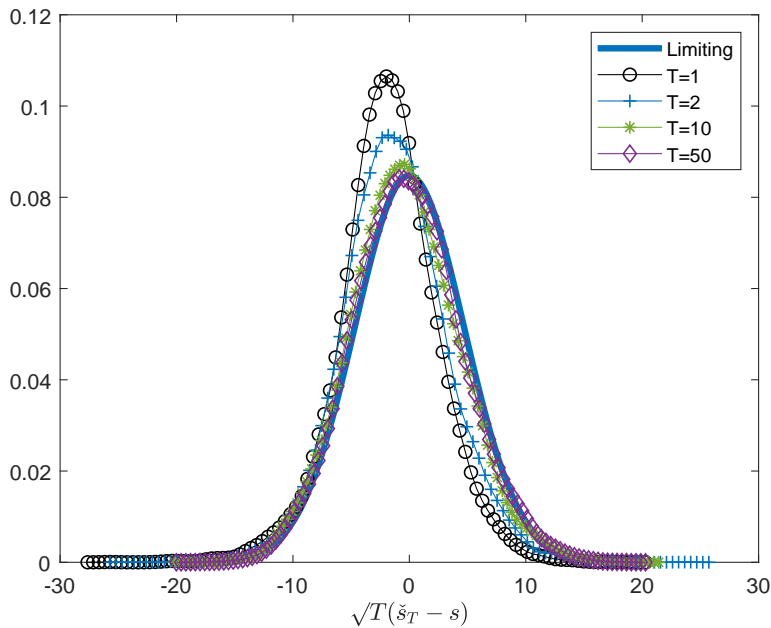}
	\caption{Plots of the kernel smoothed density estimates for $\sqrt{T}(\check{s}_{T}-s)$ for $T=1,2,10,50$, and of the limiting $N(0,I(s)^{-1})$ density.}
	\label{Figure}
\end{figure}

\section{Discussion}\label{discussion}
\noindent In this article we have provided criteria which determine whether a general diffusion defined over a bounded interval $[l,r]$ with either boundary point being either entrance or regular is $\boldsymbol{\vartheta}$-uniformly ergodic (Theorem \ref{UnifErgThm}), together with a set of additional conditions which allow for an extension of the result to a specific class of unbounded functions for diffusions possessing solely entrance boundaries (Theorem \ref{UnifErgForUnbounded}). Using the criteria in Theorem \ref{UnifErgThm}, we have shown in Corollary \ref{UnifErgThm4WF} that the Wright--Fisher diffusion is $\boldsymbol{\vartheta}$-uniformly ergodic for $\boldsymbol{\vartheta}=(s,\theta_1,\theta_2)\in\boldsymbol{\Theta}\subset\mathbb{R}\times(0,\infty)^{2}$, extending the well-known pointwise in $\boldsymbol{\vartheta}=(s,\theta_1,\theta_2)$ ergodicity of the Wright--Fisher diffusion over any compact set $\mathcal{K}\subset\mathbb{R}\times(0,\infty)^{2}$ for bounded functions. We have also proved that the family of measures $\{ \mathbb{P}^{(\boldsymbol{\vartheta})}_{\nu} : \boldsymbol{\vartheta}\in\boldsymbol{\Theta} \}$ induced by the solution to the SDE (\ref{WFDiff}) are uniformly LAN when $\boldsymbol{\Theta}\subset\mathbb{R}\times[1,\infty)^{2}$ in Theorem \ref{WFDifLAN} (by making use of Corollary \ref{UnifErgForUnboundedWF} which uses the conditions derived in Theorem \ref{UnifErgForUnbounded}), where the extra restriction on the mutation rates ensures that the likelihood ratio function is defined. \\ \newline
\noindent In Section 3 we then considered inference for the selection parameter $s$ when the diffusion is observed continuously through time and the mutation rates are known. Under these assumptions, we proved that the ML and Bayesian estimators for $s\in\mathcal{S}$ ($\mathcal{S}$ an open bounded subset of $\mathbb{R}$) in the non-neutral Wright--Fisher diffusion started from a broad class of initial distributions which includes its stationary distribution, are uniformly over compact sets consistent and display uniform in $s\in\mathcal{K}$ asymptotic normality and convergence of moments, for any compact $\mathcal{K}\subset\mathcal{S}$. Furthermore, for the right choice of loss function we also have asymptotic efficiency of the two estimators. The uniformity in these results is particularly useful as it guarantees a lower bound on the rate at which the inferential parameters are being learned. Such properties have been shown to hold for a wide class of SDEs in \cite{Kuto} by making use of the general theorems of Ibragimov and Has'minskii (Theorems I.5.1, I.5.2, I.10.1 and I.10.2 in \cite{IbraKhas}), however they do not hold for the Wright--Fisher diffusion as they require the diffusion coefficient to be non-zero everywhere and to have an inverse that has a polynomial majorant. Both conditions fail for (\ref{WFDiff}), forcing us to find an alternative way of proving that the Ibragimov--Has'minskii conditions still hold. We emphasise here that the aim of this study is to investigate the properties of the estimators in the ``ideal" continuous observation scenario when the whole path is known to the observer. \\ \newline 
\noindent Assuming that the mutation rates are known is a limitation to this study, however we emphasise that in the regime considered here these can be inferred directly from the path once the diffusion gets arbitrarily close to either boundary (see Remark \ref{InferringMutation} for the corresponding details). Nonetheless, extending this work to include mutation parameters greater than 1 would be of great interest. This proves to be rather challenging as now the likelihood ratio function involves expressions of the form $(1-x){x}^{-1}$ and $x(1-x)^{-1}$ (as witnessed in Theorem \ref{WFDifLAN}) which require much more delicate arguments in order to establish the same conclusions as in Theorem \ref{WFEstimatorResult}. The main issue here is in showing that Condition 1 in the Ibragimov--Has'minskii conditions holds, for the other two conditions follow from Theorem \ref{WFDifLAN} and Proposition \ref{thm4.6}. In particular, the fact that the functions $(1-x){x}^{-1}$ and $x(1-x)^{-1}$ are unbounded in $x$ and have only finitely many moments with respect to the stationary distribution means that the strategies used in the proofs of Propositions \ref{Inequality1} and \ref{Lemma2.11AltResult} cannot be used. \\ \newline
\noindent Recent advances in genome sequencing technology have led to an increase in the availability and analysis of genetic time series data. Inference for selection has traditionally been conducted using techniques for and data coming from a single point in time. However, having a time series of data points allows one to track the changes in allele frequencies over time, to better understand and infer the presence and effect of selection. Several inferential techniques have already been developed for such a setting (see for instance \cite{Bollback, Malaspinas, Schraiber, Beaumont1, Beaumont2}, as well as \cite{Dehasque} for a review on the subject), and although the techniques provide ostensibly reasonable estimation, there are not always theoretical guarantees on the statistical properties of the estimators being used. The results presented in this paper offer a baseline in this regard, and prove that in the absence of observational error one is guaranteed that the ML and Bayesian estimators are uniform over compact sets consistent, asymptotically normal, and display moment convergence, besides being asymptotically efficient for the right choice of loss function.  

\section{Acknowledgements}
\noindent This work was supported by the EPSRC and the MASDOC DTC (under grant EP/HO23364/1), by The Alan Turing Institute under the EPSRC grant EP/N510129/1, and by the EPSRC under grant EP/R044732/1. 

\appendix

\section{Proof of Theorem \ref{UnifErgThm}}\label{appendix1}
\begin{proof} 
	We show $\boldsymbol{\vartheta}$-uniform ergodicity for scalar diffusions on the bounded interval $[l,r]$ having entrance or regular boundary points by making use of Theorem 3.2 in \cite{Locherbach}, which allows us to bound the LHS of (\ref{UnifErgDef}) in terms of the moments of the hitting times of the process. That result requires the diffusion coefficient to be positive everywhere, and the drift and diffusion coefficients to be locally Lipschitz and to satisfy a linear growth condition. These conditions however, are only used to guarantee the existence of a unique strong non-exploding solution to the SDE in Theorem 3.2, which we are guaranteeing explicitly in the statement of the theorem. None of these requirements on the drift and diffusion coefficients are used in the proof of Theorem 3.2 in \cite{Locherbach} when $p\in\{2,3,\dots\}$, which allows us to employ this theorem for such $p$. All that remains to prove then is that these moments can be bounded in $\boldsymbol{\vartheta}$ over compact sets in the parameter space, for then (\ref{UnifErgDef}) holds. To this end, we introduce some notation from \cite{Locherbach}, namely let $a,b\in(l,r)$ be arbitrary fixed points such that $a<b$. Define $S_{0}=0$, $R_{0}=0$, and
	\begin{align*}
		S_{1} &:= \inf\left\{ t\geq 0 : Y_{t} = b \right\}\\
		R_{1} &:= \inf\left\{ t \geq S_{1} : Y_{t} = a \right\}\\
		S_{n+1} &:= \inf\left\{ t \geq R_{n} : Y_{t} = b \right\}\\
		R_{n+1} &:= \inf\left\{ t \geq S_{n+1} : Y_{t} = a \right\}
	\end{align*}
	for $n\in\mathbb{N}$. By the strong Markov property, $(R_{k}-R_{k-1})_{k\in\mathbb{N}\setminus\{0\}}$ is an i.i.d.\ sequence with law under $\mathbb{P}^{(\boldsymbol{\vartheta})}_{\nu}$ equal to the law of $R_{1}$ under $\mathbb{P}^{(\boldsymbol{\vartheta})}_{a}$, where $\mathbb{P}^{(\boldsymbol{\vartheta})}_{\nu}$ and $\mathbb{E}^{(\boldsymbol{\vartheta})}_{\nu}$ are as defined in Section 2, and $\mathbb{P}^{(\boldsymbol{\vartheta})}_{a}$ denotes the law of the process started from $a$. Related to the process $(R_{n})_{n\in\mathbb{N}}$ we have the process $(N_{t})_{t\geq 0}$ which we define as
	\begin{align*}
		N_{t} := \sup\left\{ n : R_{n} \leq t \right\}
	\end{align*}
	and for which we observe that $\{N_{t} \geq n\} = \{ R_{n} \leq t \}$. We also denote by
	\begin{align*}
		T_{b} := \inf\{t\geq 0 : Y_{t} = b \}
	\end{align*}
	the hitting time of $b$. Furthermore, let $\ell_{\boldsymbol{\vartheta}} := \mathbb{E}^{(\boldsymbol{\vartheta})}[N_{1}] = {\mathbb{E}^{(\boldsymbol{\vartheta})}_{a}[R_{1}]}^{-1}$ (see Lemma 2.7 in \cite{Locherbach}), and $\bar\eta_{1} := -(R_{2}-R_{1}-\ell_{\boldsymbol{\vartheta}}^{-1})$. Then Theorem 3.2 in \cite{Locherbach} gives us that for $p\in\{2,3,\dots\}$
	\begin{align*}
		\mathbb{P}^{(\boldsymbol{\vartheta})}_{\nu}\left[\left|\frac{1}{T}\int_{0}^{T}h(Y_{t})dt - \mathbb{E}^{(\boldsymbol{\vartheta})}\left[h(\xi)\right]\right| > \varepsilon \right] \leq 
		K(\boldsymbol{\vartheta},Y,p)\varepsilon^{-p}\|h\|_{\infty}^{p}T^{-\frac{p}{2}},
	\end{align*}
	where
	\begin{align*}
		K(\boldsymbol{\vartheta},Y,p) &:= {}6^{\frac{p}{2}}\mathbb{E}^{(\boldsymbol{\vartheta})}_{\nu}\left[R_{1}^{\frac{p}{2}}\right] + 12^{p}C_{p}\ell_{\boldsymbol{\vartheta}}^{\frac{p}{2}}\mathbb{E}^{(\boldsymbol{\vartheta})}_{\nu}\left[\left|R_{2}-R_{1}\right|^{p}\right]+2(6^p)\ell_{\boldsymbol{\vartheta}}\mathbb{E}^{(\boldsymbol{\vartheta})}_{a}\left[R_{1}^{p}\right] \nonumber \\
		&\quad{}+ 2^{\frac{p}{2}}\mathbb{E}^{(\boldsymbol{\vartheta})}_{\nu}\left[\left|R_{1}-{\ell_{\boldsymbol{\vartheta}}}^{-1}\right|^{\frac{p}{2}}\right] + 2^{\frac{3p}{2}}C_{p}\ell_{\boldsymbol{\vartheta}}^{\frac{p}{2}}\mathbb{E}^{(\boldsymbol{\vartheta})}_{\nu}\left[\left|\bar{\eta}_{1}\right|^{p}\right], 
	\end{align*}
	and $C_{p}$ is a constant depending only on $p$. We point out here that Theorem 3.2 in \cite{Locherbach} holds $\forall p\in(1,\infty)$ under additional assumptions, but for our case we need only $p\in\{2,3,\dots\}$. Thus we are left with showing these moments can be bounded from above in $\boldsymbol{\vartheta}$ over compact sets, for then (\ref{UnifErgDef}) follows. Now the only terms above that depend on $\boldsymbol{\vartheta}$ are
	\begin{align}
		\mathbb{E}^{(\boldsymbol{\vartheta})}_{\nu}\left[R_{1}^{\frac{p}{2}}\right], & & \ell_{\boldsymbol{\vartheta}}^{\frac{p}{2}}\mathbb{E}^{(\boldsymbol{\vartheta})}_{\nu}\left[\left|R_{2}-R_{1}\right|^{{p}}\right], & & \ell_{\boldsymbol{\vartheta}}\mathbb{E}^{(\boldsymbol{\vartheta})}_{a}\left[R_{1}^{{p}}\right], & & \mathbb{E}^{(\boldsymbol{\vartheta})}_{\nu}\left[\left|R_{1}-\ell_{\boldsymbol{\vartheta}}^{-1}\right|^{\frac{p}{2}}\right], & & \ell_{\boldsymbol{\vartheta}}^{\frac{p}{2}}\mathbb{E}^{(\boldsymbol{\vartheta})}_{\nu}\left[\left|\bar{\eta}_{1}\right|^{{p}}\right]
	\end{align}
	and in light of the following inequalities 
	\allowdisplaybreaks{
		\begin{align*}
			\mathbb{E}^{(\boldsymbol{\vartheta})}_{\nu}\left[\left|\bar{\eta}_{1}\right|^{{p}}\right] &\leq 2^{p-1}\left(\mathbb{E}^{(\boldsymbol{\vartheta})}_{\nu}\left[\left|R_{2}-R_{1}\right|^{{p}}\right] + \mathbb{E}^{(\boldsymbol{\vartheta})}_{\nu}\left[\ell_{\boldsymbol{\vartheta}}^{{-p}}\right]\right) \nonumber \\
			&= 2^{p-1}\left(\mathbb{E}^{(\boldsymbol{\vartheta})}_{a}\left[R_{1}^{{p}}\right] + \mathbb{E}^{(\boldsymbol{\vartheta})}_{a}\left[R_{1}\right]^{{p}}\right), \\
			\mathbb{E}^{(\boldsymbol{\vartheta})}_{\nu}\left[\left|R_{1}-\ell_{\boldsymbol{\vartheta}}^{-1}\right|^{\frac{p}{2}}\right] &\leq 2^{\frac{p}{2}-1}\left(\mathbb{E}^{(\boldsymbol{\vartheta})}_{\nu}\left[R_{1}^{\frac{p}{2}}\right] + \mathbb{E}^{(\boldsymbol{\vartheta})}_{\nu}\left[\ell_{\boldsymbol{\vartheta}}^{-\frac{p}{2}}\right]\right) \\
			&= 2^{\frac{p}{2}-1}\left(\mathbb{E}^{(\boldsymbol{\vartheta})}_{\nu}\left[R_{1}^{\frac{p}{2}}\right] + \mathbb{E}^{(\boldsymbol{\vartheta})}_{a}\left[R_{1}\right]^{\frac{p}{2}}\right),  \\
			\mathbb{E}^{(\boldsymbol{\vartheta})}_{\nu}\left[\left|R_{2}-R_{1}\right|^{{p}}\right] &= \mathbb{E}^{(\boldsymbol{\vartheta})}_{a}\left[R_{1}^{p}\right] \leq 2^{p-1}\left(\mathbb{E}^{(\boldsymbol{\vartheta})}_{a}\left[T_{b}^{{p}}\right] + \mathbb{E}^{(\boldsymbol{\vartheta})}_{b}\left[T_{a}^{p}\right]\right), \\
			\mathbb{E}^{(\boldsymbol{\vartheta})}_{\nu}\left[R_{1}^{\frac{p}{2}}\right] &\leq 2^{\frac{p}{2}-1}\left(\mathbb{E}^{(\boldsymbol{\vartheta})}_{\nu}\left[T_{b}^{\frac{p}{2}}\right] + \mathbb{E}^{(\boldsymbol{\vartheta})}_{b}\left[T_{a}^{\frac{p}{2}}\right]\right), \\
			\mathbb{E}^{(\boldsymbol{\vartheta})}_{a}\left[R_{1}\right] &= \mathbb{E}^{(\boldsymbol{\vartheta})}_{a}\left[T_{b}\right] + \mathbb{E}^{(\boldsymbol{\vartheta})}_{b}\left[T_{a}\right],
	\end{align*}}
	\noindent it suffices to consider only the terms $\ell_{\boldsymbol{\vartheta}}$ and $\mathbb{E}^{(\boldsymbol{\vartheta})}_{\nu}\left[T_{b}^{p}\right]$. Thus we are left with showing that these two terms can be bounded from above in $\boldsymbol{\vartheta}$ over any compact set $\mathcal{K}\subset\boldsymbol{\Theta}$. We further point out that we can reduce our considerations in the expressions above to integer moments, for if this is not the case then
	\begin{align*}
		\mathbb{E}^{(\boldsymbol{\vartheta})}_{\nu}\left[T_{b}^{p}\right] \leq \mathbb{E}^{(\boldsymbol{\vartheta})}_{\nu}\left[T_{b}^{\lceil p \rceil}\right] + \mathbb{E}^{(\boldsymbol{\vartheta})}_{\nu}\left[T_{b}^{\lfloor p \rfloor}\right]
	\end{align*}
	where $\lceil\cdot\rceil$ and $\lfloor\cdot\rfloor$ denote the ceiling and floor functions respectively. \\ \newline
	\noindent We make use of the backward equation for the quantity $U_{q,b}(x)  := \mathbb{E}^{(\boldsymbol{\vartheta})}_{x}[T_{b}^{q}]$ for $q\in\{1,2,\dots\}$, to derive the ODE (as can be found in \cite{KarlinTaylor} p.\ 203 and 210, and \cite{WangChuancun})
	\begin{align}\label{ODE}
		\frac{\sigma^{2}(x)}{2}U_{q,b}''(x) + \mu(\boldsymbol{\vartheta},x)U_{q,b}'(x) + qU_{q-1,b}(x) = 0
	\end{align}
	with boundary conditions $U_{q,b}(b) = 0$ and 
	\begin{align*}
		\lim_{y\rightarrow l}S'(y)^{-1}\frac{\partial}{\partial y}U_{q,b}(y) = 0
	\end{align*}
	\noindent when $x<b$, or 
	\begin{align*}
		\lim_{y\rightarrow r}S'(y)^{-1}\frac{\partial}{\partial y}U_{q,b}(y) = 0
	\end{align*}
	\noindent when $x>b$, where
	\begin{align*}
		S(x) := \int^{x}e^{-\int^{y}\frac{2\mu(z)}{\sigma^{2}(z)}dz}dy.
	\end{align*}
	\noindent Solving (\ref{ODE}) for $x<b$ leads to
	\begin{align}\label{case1Exb1}
		\mathbb{E}^{(\boldsymbol{\vartheta})}_{x}[T_{b}^{q}] = \int_{x}^{b}e^{-\int^{\xi}\frac{2\mu(\boldsymbol{\vartheta},y)}{\sigma^{2}(y)}dy}\int_{l}^{\xi}\frac{2}{\sigma^{2}(\eta)}e^{\int^{\eta}\frac{2\mu(\boldsymbol{\vartheta},y)}{\sigma^{2}(y)}dy}qU_{q-1,b}(\eta)d\eta d\xi,
	\end{align}
	whilst for $x>b$ we have that
	\begin{align}\label{case1Exb2}
		\mathbb{E}^{(\boldsymbol{\vartheta})}_{x}[T_{b}^{q}] = \int_{b}^{x}e^{-\int^{\xi}\frac{2\mu(\boldsymbol{\vartheta},y)}{\sigma^{2}(y)}dy}\int_{\xi}^{r}\frac{2}{\sigma^{2}(\eta)}e^{\int^{\eta}\frac{2\mu(\boldsymbol{\vartheta},y)}{\sigma^{2}(y)}dy}qU_{q-1,b}(\eta)d\eta d\xi.
	\end{align}
	We claim that for any $x<b$ and any $q\in\{1,2,\dots\}$,
	\begin{align}\label{Integralboundx<b}
		\mathbb{E}^{(\boldsymbol{\vartheta})}_{x}[T_{b}^{q}] &\leq q!\left(\int_{l}^{b}e^{-\int^{\xi}\frac{2\mu(\boldsymbol{\vartheta},y)}{\sigma^{2}(y)}dy}\int_{l}^{\xi}\frac{2}{\sigma^{2}(\eta)}e^{\int^{\eta}\frac{2\mu(\boldsymbol{\vartheta},y)}{\sigma^{2}(y)}dy}d\eta d\xi\right)^{q} \nonumber \\
		&= q!\kappa^{l}_{\boldsymbol{\vartheta}}(l,b)^{q} < \infty.
	\end{align}
	To see this, observe that 
	\begin{align}\label{IntegralBounds}
		\mathbb{E}^{(\boldsymbol{\vartheta})}_{x}[T_{b}] &= \int_{x}^{b}e^{-\int^{\xi}\frac{2\mu(\boldsymbol{\vartheta},y)}{\sigma^{2}(y)}dy}\int_{l}^{\xi}\frac{2}{\sigma^{2}(\eta)}e^{\int^{\eta}\frac{2\mu(\boldsymbol{\vartheta},y)}{\sigma^{2}(y)}dy}d\eta d\xi \nonumber \\
		&\leq \int_{l}^{b}e^{-\int^{\xi}\frac{2\mu(\boldsymbol{\vartheta},y)}{\sigma^{2}(y)}dy}\int_{l}^{\xi}\frac{2}{\sigma^{2}(\eta)}e^{\int^{\eta}\frac{2\mu(\boldsymbol{\vartheta},y)}{\sigma^{2}(y)}dy}d\eta d\xi \nonumber\\
		&= \kappa^{l}_{\boldsymbol{\vartheta}}(l,b),
	\end{align}
	and we observe that $\kappa^{l}_{\boldsymbol{\vartheta}}(l,b)$ is finite for all $\boldsymbol{\vartheta}\in\boldsymbol{\Theta}$ in virtue of $l$ being either an entrance or regular boundary (see Table 6.2 in \cite[Chapter 15, Section 6]{KarlinTaylor} p. 234, and note that $\kappa^{l}_{\boldsymbol{\vartheta}}(l,b)$ here corresponds to $N(l)$ as defined in (6.19) there). Observe that the RHS of \eqref{IntegralBounds} is independent of $x$, so we can use the recursion in (\ref{case1Exb2}) to conclude by induction that (\ref{Integralboundx<b}) holds for $q\in\{1,2,\dots\}$ as required. Similar arguments to those presented above coupled with the requirement that the boundary point at $r$ is either entrance or regular, allows us to conclude that for $x>b$ and $q\in\{1,2,\dots\}$,
	\begin{align}\label{Integralboundsx>b}
		\mathbb{E}^{(\boldsymbol{\vartheta})}_{x}[T_{b}^{q}] &\leq q!\left(\int_{b}^{r}e^{-\int^{\xi}\frac{2\mu(\boldsymbol{\vartheta},y)}{\sigma^{2}(y)}dy}\int_{\xi}^{r}\frac{2}{\sigma^{2}(\eta)}e^{\int^{\eta}\frac{2\mu(\boldsymbol{\vartheta},y)}{\sigma^{2}(y)}dy}d\eta d\xi\right)^{q} \nonumber \\
		&= q!\kappa^{r}_{\boldsymbol{\vartheta}}(b,r)^{q} < \infty.
	\end{align}
	Both RHS of (\ref{Integralboundx<b}) and (\ref{Integralboundsx>b}) are independent of $x$, so trivially
	\begin{align}\label{IntegralBoundfs}
		\mathbb{E}^{(\boldsymbol{\vartheta})}_{\nu}\left[T_{b}^{q}\right] &\leq q!\left(\kappa^{l}_{\boldsymbol{\vartheta}}(l,b)^{q} + \kappa^{r}_{\boldsymbol{\vartheta}}(b,r)^{q}\right).
	\end{align}
	All the terms on the RHS of (\ref{Integralboundx<b}), (\ref{Integralboundsx>b}) and (\ref{IntegralBoundfs}) are finite for $\boldsymbol{\vartheta}\in\boldsymbol{\Theta}$, so we have our required bound when taking the supremum over a compact set $\mathcal{K}\subset\boldsymbol{\Theta}$ for $\mathbb{E}^{(\boldsymbol{\vartheta})}_{\nu}\left[T_{b}^{q}\right]$. It remains to show that we can bound $\ell_{\boldsymbol{\vartheta}}$ from above. Observe that by definition
	\begin{align*}
		\ell_{\boldsymbol{\vartheta}} = \mathbb{E}^{(\boldsymbol{\vartheta})}_{a}\left[R_{1}\right]^{-1} = \left(\mathbb{E}^{(\boldsymbol{\vartheta})}_{a}\left[T_{b}\right] + \mathbb{E}^{(\boldsymbol{\vartheta})}_{b}\left[T_{a}\right]\right)^{-1},
	\end{align*}
	and recall that we will take the supremum in $\boldsymbol{\vartheta}$ over a given compact set $\mathcal{K}$. Using (\ref{case1Exb1}) and (\ref{case1Exb2}) with $q=1$, coupled with \eqref{ellConditions}, we deduce that $\mathbb{E}^{(\boldsymbol{\vartheta})}_{a}\left[T_{b}\right]$ and $\mathbb{E}^{(\boldsymbol{\vartheta})}_{b}\left[T_{a}\right]$ are bounded away from 0 for any compact $\mathcal{K}\subset\boldsymbol{\Theta}$, and thus we have the required upper bound on $\ell_{\boldsymbol{\vartheta}}$.
\end{proof}

\section{Proof of Theorem \ref{UnifErgForUnbounded}}\label{appendix2}
\begin{proof}
	\noindent Recall the notation introduced in Appendix \ref{appendix1}, namely the regeneration times $\{ S_n, R_n \}_{n=0}^{\infty}$ and the number of upcrossings up to time $t$, $\{ N_t \}_{t\geq0}$. We want to prove that 
	\begin{align}\label{Appendix2Aim}
		\lim\limits_{T\rightarrow\infty}\sup\limits_{\boldsymbol{\vartheta}\in\mathcal{K}}\mathbb{P}^{(\boldsymbol{\vartheta})}_{\nu}\left[\left|\frac{1}{T}\int_{0}^{T}h(Y_t)dt - \mathbb{E}^{(\boldsymbol{\vartheta})}\left[h(\xi)\right]\right| > \varepsilon \right] = 0
	\end{align} 
	holds for any compact set $\mathcal{K}\subset\boldsymbol{\Theta}$, with $h$ as defined in the statement of the theorem. The strategy here will be to decompose the sample path of the diffusion into i.i.d.\ blocks of excursions as done in Theorem 3.5 in \cite{Locherbach}. However, we will deal with the resulting expectations in a different way, namely by applying the ODE approach used in Appendix \ref{appendix1} to bound these quantities from above in $\boldsymbol{\vartheta}$ over a compact set $\mathcal{K}$. To this end, fix $\varepsilon\in(0,\mathbb{E}^{(\boldsymbol{\vartheta})}[h(\xi)])$ and choose $\delta\in(0,1)$ such that $\varepsilon=\delta\mathbb{E}^{(\boldsymbol{\vartheta})}[h(\xi)]$, and set $\Omega_T := \{ |N_T T^{-1} - \ell_{\boldsymbol{\vartheta}}|\leq \ell_{\boldsymbol{\vartheta}}\delta/4 \}$ for $\ell_{{\boldsymbol{\vartheta}}}=\mathbb{E}_{a}^{(\boldsymbol{\vartheta})}[R_1]^{-1}$. Then as in the proof of Theorem 3.5 in \cite{Locherbach}, we get the following decomposition
	\allowdisplaybreaks{\begin{align*}
			\mathbb{P}_{\nu}^{(\boldsymbol{\vartheta})}&\left[\left|\frac{1}{T}\int_{0}^{T}h(Y_t)dt - \mathbb{E}^{(\boldsymbol{\vartheta})}\left[h(\xi)\right]\right| >\varepsilon\right] \\
			\leq{} & \mathbb{P}_{\nu}^{(\boldsymbol{\vartheta})}\left[\left|\int_{0}^{R_1}h(Y_t)dt \right|>\frac{T\varepsilon}{4}\right] \\
			&{}+ \mathbb{P}_{\nu}^{(\boldsymbol{\vartheta})}\left[\left|\int_{R_1}^{R_{N_T}+1}h(Y_t)dt - N_T \mathbb{E}^{(\boldsymbol{\vartheta})}\left[h(\xi)\right]\mathbb{E}_{a}^{(\boldsymbol{\vartheta})}\left[R_1\right]\right|>\frac{T\varepsilon}{4} ; \Omega_T\right] \\
			&{}+ \mathbb{P}_{\nu}^{(\boldsymbol{\vartheta})}\left[\left|N_T \mathbb{E}^{(\boldsymbol{\vartheta})}\left[h(\xi)\right]\mathbb{E}_{a}^{(\boldsymbol{\vartheta})}\left[R_1\right]-T\mathbb{E}^{(\boldsymbol{\vartheta})}\left[h(\xi)\right]\right|>\frac{T\varepsilon}{4} ; \Omega_T\right] \\
			&{}+ \mathbb{P}_{\nu}^{(\boldsymbol{\vartheta})}\left[\left|\int_{T}^{R_{N_T}+1}h(Y_t)dt \right|>\frac{T\varepsilon}{4} ; \Omega_T\right] + \mathbb{P}_{\nu}^{(\boldsymbol{\vartheta})}\left[\Omega_T ^{c}\right] =: A+B+E+C+D
	\end{align*}}\noindent Dealing with $E$ and $D$ can be achieved as in equations (3.10) and (3.14) in \cite{Locherbach}, to deduce that $E=0$ and
	\begin{align*}
		D \leq \frac{1}{T\varepsilon^{2}}\mathbb{E}^{(\boldsymbol{\vartheta})}\left[h(\xi)\right]^{2}\left(2\mathbb{E}_{\nu}^{(\boldsymbol{\vartheta})}\left[\left|R_1 - \ell_{\boldsymbol{\vartheta}}^{-1}\right|\right] + 2^{3}C_{1}^{2}\mathbb{E}^{(\boldsymbol{\vartheta})}_{\nu}\left[\left|\bar{\eta}_1\right|^{2}\right]\ell_{\boldsymbol{\vartheta}}\right),
	\end{align*}
	for $C_1$ the constant from the Burkholder-Davis-Gundy inequality. All the above expressions are either constant or have been shown to be bounded in $\boldsymbol{\vartheta}$ over compact sets in the parameter space in Appendix \ref{appendix1}, so it remains to deal with terms $A$, $B$ and $C$ above. \\ \newline
	\noindent Applying Markov's inequality to $A$ gives
	\begin{align*}
		A \leq \frac{4}{T\varepsilon}\mathbb{E}^{(\boldsymbol{\vartheta})}_{\nu}\left[\int_{0}^{R_1}h(Y_t)dt\right]
	\end{align*}
	and we can decompose the above integral
	\begin{align}\label{Aterm}
		\mathbb{E}^{(\boldsymbol{\vartheta})}_{\nu}\left[\int_{0}^{R_1}h(Y_t)dt\right] &= \mathbb{E}^{(\boldsymbol{\vartheta})}_{\nu}\left[\int_{0}^{S_1}h(Y_t)dt\right] + \mathbb{E}^{(\boldsymbol{\vartheta})}_{\nu}\left[\int_{S_1}^{R_1}h(Y_t)dt\right] \nonumber\\
		&\leq \mathbb{E}^{(\boldsymbol{\vartheta})}_{\nu}\left[\int_{0}^{T_b}h(Y_t)dt\right] + \sup_{y\in[a,b]}h(y)\mathbb{E}^{(\boldsymbol{\vartheta})}_{\nu}\left[R_1\right].
	\end{align}
	So it remains to prove that the first term on the RHS can be bounded in $\boldsymbol{\vartheta}$. It turns out that $B$ and $C$ can be bounded by similar quantities, so we do this first and subsequently show that the resulting quantities can be bounded in $\boldsymbol{\vartheta}$ too.\\ \newline
	\noindent Indeed, set $\xi_k := \int_{R_k}^{R_{k+1}}h(Y_t)dt$, $M_0=0$, and 
	\begin{align*}
		M_n := \sum_{k=1}^{n}\left(\xi_k - \mathbb{E}^{(\boldsymbol{\vartheta})}_{\nu}\left[\xi_k\right]\right).
	\end{align*}
	Then
	\begin{align*}
		B = \mathbb{P}^{(\boldsymbol{\vartheta})}_{\nu}\left[\left|M_{N_T}\right| > \frac{T\varepsilon}{4} ; \Omega_T\right] &\leq \mathbb{P}^{(\boldsymbol{\vartheta})}_{\nu}\left[\sup\limits_{n\leq \lfloor{T\ell_{\boldsymbol{\vartheta}}(1+\delta/4)}\rfloor}\left|M_{n}\right| > \frac{T\varepsilon}{4} \right] \\
		&\leq \left(\frac{4}{T\varepsilon}\right)^{2}\mathbb{V}^{(\boldsymbol{\vartheta})}_{\nu}\left[M_{\lfloor{T\ell_{\boldsymbol{\vartheta}}(1+\delta/4)}\rfloor}\right] 
	\end{align*}
	by the Kolmogorov inequality where $\mathbb{V}^{(\boldsymbol{\vartheta})}_{\nu}$ denotes the variance with respect to the measure $\mathbb{P}_{\nu}^{(\boldsymbol{\vartheta})}$. Now observe that
	\begin{align*}
		\mathbb{V}^{(\boldsymbol{\vartheta})}_{\nu}\left[M_{\lfloor{T\ell_{\boldsymbol{\vartheta}}(1+\delta/4)}\rfloor}\right] &= \sum_{k=1}^{\lfloor{T\ell_{\boldsymbol{\vartheta}}(1+\delta/4)}\rfloor}\mathbb{V}_{\nu}^{\boldsymbol{\vartheta}}\left[\left(\xi_{k} - \mathbb{E}_{\nu}^{(\boldsymbol{\vartheta})}\left[\xi_k\right]\right)^{2}\right]\\ 
		&= \lfloor{T\ell_{\boldsymbol{\vartheta}}(1+\delta/4)}\rfloor\mathbb{E}^{(\boldsymbol{\vartheta})}_{\nu}\left[\left(\xi_1 - \mathbb{E}^{(\boldsymbol{\vartheta})}_{\nu}\left[\xi_1\right]\right)^{2}\right] \\
		&\leq \lfloor{T\ell_{\boldsymbol{\vartheta}}(1+\delta/4)}\rfloor2\left(\mathbb{E}^{(\boldsymbol{\vartheta})}_{a}\left[\xi_0 ^{2}\right] + \mathbb{E}^{(\boldsymbol{\vartheta})}_{a}\left[\xi_0\right]^{2}\right).
	\end{align*}
	because the $\{\xi_k\}_{k=1}^{\infty}$ are i.i.d., and moreover we have that under $\mathbb{P}^{(\boldsymbol{\vartheta})}_{\nu}$ they are equal in distribution to $\xi_0$ under $\mathbb{P}^{(\boldsymbol{\vartheta})}_{a}$. So
	\begin{align}
		\label{Bbound}
		B &\leq \frac{4^{2}\lfloor{\ell_{\boldsymbol{\vartheta}}(1+\delta/4)}\rfloor}{T\varepsilon^{2}}2\left(\mathbb{E}^{(\boldsymbol{\vartheta})}_{a}\left[\xi_0 ^{2}\right] + \mathbb{E}^{(\boldsymbol{\vartheta})}_{a}\left[\xi_0\right]^{2}\right).
	\end{align}
	The second term of \eqref{Bbound} can be bounded in the same way as in (\ref{Aterm}), whilst for the first term we can use a similar decomposition to get
	\begin{align}\label{BoundingSecondMoment}
		\mathbb{E}^{(\boldsymbol{\vartheta})}_{a}\left[\xi_0 ^{2}\right] \leq 2\left(\mathbb{E}^{(\boldsymbol{\vartheta})}_{a}\left[\left(\int_{0}^{T_b}h(Y_t)dt\right)^{2}\right] + \sup_{y\in[a,b]}h(y)^{2}\mathbb{E}^{(\boldsymbol{\vartheta})}_{a}\left[R_{1}^{2}\right]\right). 
	\end{align}
	Finally, for $C$ we use the same arguments as in \cite{Locherbach} (just before equation (3.13)) to get that
	\begin{align*}
		C &\leq \sum_{k=1}^{\lfloor T\ell_{\boldsymbol{\vartheta}}(1+\delta/4)\rfloor}\mathbb{P}^{(\boldsymbol{\vartheta})}_{\nu}\left[\int_{R_k}^{R_{k}+1}h(Y_t)dt>\frac{T\varepsilon}{4}\right] \\
		&\leq \frac{\lfloor T\ell_{\boldsymbol{\vartheta}}(1+\delta/4)\rfloor}{T^2 \varepsilon^2} \mathbb{E}^{(\boldsymbol{\vartheta})}_{\nu}\left[\left(\int_{R_1}^{R_2}h(Y_t)dt\right)^2\right]\\
		&\leq \frac{\ell_{\boldsymbol{\vartheta}}(1+\delta/4)}{T \varepsilon^2} \mathbb{E}^{(\boldsymbol{\vartheta})}_{a}\left[\left(\int_{0}^{R_1}h(Y_t)dt\right)^2\right], 
	\end{align*}
	and we can apply the same reasoning as in (\ref{BoundingSecondMoment}). It remains to show that the terms
	\begin{align*}
		\mathbb{E}^{(\boldsymbol{\vartheta})}_{a}\left[\int_{0}^{T_b}h(Y_t)dt\right], & & \mathbb{E}^{(\boldsymbol{\vartheta})}_{\nu}\left[\int_{0}^{T_b}h(Y_t)dt\right], & & \mathbb{E}^{(\boldsymbol{\vartheta})}_{a}\left[\left(\int_{0}^{T_b}h(Y_t)dt\right)^2\right]
	\end{align*}
	can be bounded in $\boldsymbol{\vartheta}$. The same arguments used to derive the ODEs in Appendix \ref{appendix1} can be used here to derive an ODE for $U_n(x) := \mathbb{E}^{(\boldsymbol{\vartheta})}_{x}[(\int_{0}^{T_b}h(Y_t)dt)^{n}]$ for the cases when $x<b$ and $x>b$ with the same boundary conditions as in Appendix \ref{appendix1}. Thus the following recursion holds for $U_n(x)$ when $x<b$
	\begin{align}\label{recursion1}
		U_n(x) &= n\int_{x}^{b}e^{-\int^{\xi}\frac{2\mu(\boldsymbol{\vartheta},y)}{\sigma^{2}(y)}dy}\int_{l}^{\xi}\frac{2h(\eta)}{\sigma^{2}(\eta)}e^{\int^{\eta}\frac{2\mu(\boldsymbol{\vartheta},y)}{\sigma^{2}(y)}dy}U_{n-1}(\eta)d\eta d\xi, & n&=1,2,\dots,
	\end{align}
	and for $x>b$ we have
	\begin{align}\label{recursion2}
		U_n(x) &= n\int_{b}^{x}e^{-\int^{\xi}\frac{2\mu(\boldsymbol{\vartheta},y)}{\sigma^{2}(y)}dy}\int_{\xi}^{r}\frac{2h(\eta)}{\sigma^{2}(\eta)}e^{\int^{\eta}\frac{2\mu(\boldsymbol{\vartheta},y)}{\sigma^{2}(y)}dy}U_{n-1}(\eta)d\eta d\xi,& n&=1,2,\dots.
	\end{align}
	Now for $n=1$, we get that for $x<b$,
	\begin{align*}
		\mathbb{E}^{(\boldsymbol{\vartheta})}_{x}\left[\int_{0}^{T_b}h(Y_t)dt\right] &= \int_{x}^{b}e^{-\int^{\xi}\frac{2\mu(\boldsymbol{\vartheta},y)}{\sigma^{2}(y)}dy}\int_{l}^{\xi}\frac{2h(\eta)}{\sigma^{2}(\eta)}e^{\int^{\eta}\frac{2\mu(\boldsymbol{\vartheta},y)}{\sigma^{2}(y)}dy}d\eta d\xi \end{align*}
	which is bounded over any compact set $\mathcal{K}\subset\boldsymbol{\Theta}$ by \eqref{Unbounded1}. In view of condition (\ref{Unbounded3}), we get that $\mathbb{E}^{(\boldsymbol{\vartheta})}_{\nu}[\int_{0}^{T_b}h(Y_t)dt]$ is also bounded from above in $\boldsymbol{\vartheta}$ over compact sets $\mathcal{K}\subset\boldsymbol{\Theta}$, and finally, using the recursions in (\ref{recursion1}) and (\ref{recursion2}), we get that for $x<b$,
	\begin{align*}
		\mathbb{E}^{(\boldsymbol{\vartheta})}_{x}\left[\left(\int_{0}^{T_b}h(Y_t)dt\right)^{2}\right] &= 2\int_{x}^{b}e^{-\int^{\xi}\frac{2\mu(\boldsymbol{\vartheta},y)}{\sigma^{2}(y)}dy}\int_{l}^{\xi}\frac{2h(\eta)}{\sigma^{2}(\eta)}e^{\int^{\eta}\frac{2\mu(\boldsymbol{\vartheta},y)}{\sigma^{2}(y)}dy}\mathbb{E}^{(\boldsymbol{\vartheta})}_{\eta}\left[\int_{0}^{T_b}h(Y_t)dt\right]d\eta d\xi
	\end{align*}
	which is bounded from above in $\boldsymbol{\vartheta}$ over a given compact set $\mathcal{K}\subset\boldsymbol{\Theta}$ by \eqref{Unbounded2}, giving the required bounds for the quantities $A$, $B$, and $C$. Combining these with the bounds for $D$ and $E$ we conclude that \eqref{Appendix2Aim} holds. 
\end{proof}
\bibliographystyle{abbrv}
\bibliography{bib}

\end{document}